\newcommand{\beas}{\begin{eqnarray*}}
\newcommand{\enas}{\end{eqnarray*}}
\newcommand{\bea}{\begin{eqnarray}}
\newcommand{\ena}{\end{eqnarray}}
\newcommand{\FFF}{\mathcal{F}}
\newcommand{\HHH}{\mathcal{H}}
  \newcommand{\AAA}{\mathcal{A}}
\newcommand{\EE}{\mathbb{E}}
  \newcommand{\RR}{\mathbb{R}}
  \def\vvvert{|\!|\!|}
\newtheorem{prop}{Proposition}[section]
\newtheorem{cor}[prop]{Corollary}
\newtheorem{lemma}[prop]{Lemma}
\newtheorem{remark}[prop]{Remark}
\newtheorem{rem}[prop]{Remark}
\newtheorem{thm}[prop]{Theorem}
\newtheorem{theorem}[prop]{Theorem}
\newtheorem{definition}[prop]{Definition}
\newtheorem{example}[prop]{Example}
\newcommand{\A}{\mathcal A}
\renewcommand{\geq}{\geqslant}
\def\leq{\leqslant}
\newcommand{\N}{\mathbb{N}}
\newcommand{\R}{\mathbb{R}}
\newcommand{\E}{\mathbb{E}}
\renewcommand{\P}{\mathbb{P}}
\newcommand{\eq}{\begin{equation}}
\newcommand{\qe}{\end{equation}}
\newcommand{\dv}{\mathrm{div}}
\newcommand{\grad}{\nabla}
\begin{document}

\title{Stein operators, kernels and discrepancies for multivariate
  continuous distributions} \author{G. Mijoule\footnote{INRIA Paris,
    Team MoKaPlan, 2 rue Simone Iff, 75012 Paris,
    guillaume.mijoule@inria.fr}, \; G. Reinert\footnote{University of
    Oxford, Department of Statistics, 1 South Parks Road, Oxford OX1
    3TG, UK, reinert@stats.ox.ac.uk} \; and
  Y. Swan\footnote{Universit\'e libre de Bruxelles, D\'epartement de
    Math\'ematique, Campus Plaine, Boulevard du Triomphe CP210, B-1050
    Brussels, yvswan@ulb.ac.be} }

\date{}
\maketitle

\abstract{We present a general framework for setting up
  Stein's method for multivariate continuous distributions. The
  approach gives a collection of Stein characterizations, among which
  we highlight score-Stein operators and kernel-Stein
  operators. Applications include copulas and distance between
  posterior distributions. We give a general explicit construction for
  Stein kernels for elliptical distributions and discuss Stein kernels
  in generality, highlighting connections with Fisher information and
  mass transport. Finally, a goodness-of-fit test based on Stein
  discrepancies is given.}

\medskip

MSC 2010 classification: 60B10, 60B12

\section{Introduction} 
\label{sec:introduction}

 Stein's method is a
collection of tools permitting to bound quantities of the form 
\begin{equation*}
d_{\mathcal{H}}(X, W) =  \sup_{h \in \mathcal{H}} \left| \mathbb{E}[h(W)] - \mathbb{E}[h(X)] \right|
\end{equation*}
where $\HHH$ is a measure-determining class and $X, W$ are two random
quantities of interest with $X$, say, {following the target distribution $\mu$}.
The method can be
summarised as follows.  First 
find a so-called {{\it Stein operator}} ${\AAA}$ and a wide class
of functions $\FFF(\AAA)$ such that (i)~$X \sim \mu$ if and only if
$ \E [ \AAA f(X)] = 0$ for all functions $f \in \FFF(\AAA)$ \emph{and}
(ii)~for each $h \in \HHH$ there exists a well-defined and tractable
solution $f = f_h \in \FFF(\AAA)$ of the {{\it Stein equation}}
$h(x) - \E [h(X)] = \AAA f(x).$ Then, upon noting that
$ \E [h(W)] - \E [h(X)] = \E [\AAA f_h (W)]$ for all $h$,
the problem of bounding $d_{\mathcal{H}}(X, W)$ has been re-expressed
in terms of that of bounding
$ \sup_{h \in \mathcal{H}} \E [\AAA f_h (W)]$. The success of the
method lies in the fact that this last quantity is amenable to a wide
variety of approaches.

{There exist many frameworks in which Stein's method is well
  understood. We refer to the surveys \cite{rossfund,BaCh14,chatsurv}
  as well as the papers \cite{LRS16,ley2015distances} for the
  univariate setting.  We also highlight \cite{ahind2019} who provide
  an up-to-date overview in the context of infinitely divisible
  distributions. Comprehensive introductions to some of the most
  important aspects of the theory are available from the monographs
  \cite{stein1986} as well as \cite{NP11,ChGoSh11}, with a particular
  focus on Gaussian approximation. Although the univariate case is the
  most studied, many references also tackle multivariate
  distributions. For discrete multivariate distributions,
  \cite{barbour2015multivariate} and \cite{barbour2018multivariate}
  provide a framework which is applicable in many situations. In
  \cite{reinert2017approximating}, stationary distributions of Glauber
  Markov chains are characterised. The multivariate Gaussian case has
  naturally received the most attention, starting with \cite{Ba90} and
  \cite{G91}, see also \cite{MR2573554,
    FSW18} 
  and \cite{GMS18}.
The Dirichlet distribution has been
treated in \cite{gan2017dirichlet} using a coupling
approach. Log-concave densities are treated in
\cite{mackey2016multivariate}, and more general settings have been
dealt with in \cite{gorham2016measuring,FSW18}. 
Yet,  a general approach for the
multivariate case 
  has been elusive. This is the task which our paper
  addresses.}

  The starting point for a
   multivariate Stein's method is a Stein characterization for the
   Gaussian law which states that a random vector $Y$ is a
   multivariate Gaussian $d$-random vector with mean $\nu$ and
   covariance $\Sigma$ (short: $Y \sim {\cal{N}}(\nu, \Sigma)$) if and
   only if
\begin{equation}
  \label{eq:stlemma}
    \EE \big[(Y-\nu)^t \nabla f(Y)\big]  
    = \EE\big[ \nabla^t \Sigma \nabla f(Y)\big], 
\end{equation}
for all absolutely continuous function $f : \R^d \to \R$ for which the
expectations exist; here $\nabla$ denotes the gradient
operator. Assume that $h:\R^d\to \R$ has three bounded derivatives. Then,
if $\Sigma\in\R^{d\times d}$ is symmetric and positive definite, and
$Z \sim \mathcal{MVN}(0,\Sigma)$, there is a solution $f:\R^d\to \R$
to the Stein equation
\begin{equation}\label{mvnstein}
    \nabla^t \Sigma \nabla f(w) - w^t\nabla f(w)
     = h(w) - \EE h(\Sigma^{1/2}Z),
\end{equation}
given at every $w\in\R^d$ by the {\it{Mehler formula}}
\begin{equation}\label{mehler}f(w) = \int_0^1 \frac{1}{2t} \big(\EE
  {h}( Z_{w,t} ) - \EE {h}(\Sigma^{1/2}Z)\big)
  dt\end{equation} 
with $Z_{w,t} = \sqrt{t} w+ \sqrt{1-t} \, \Sigma^{1/2}Z. $ 
Generalizations
to infinitely-dimensional functionals using Malliavin calculus are
available in \cite{NP11} and a generalization to Gaussian random
measures is employed in \cite{holmes2004stein}. 

{Pursuing the analogy,
  classical Markov theory leads, under appropriate regularity
  assumptions,  to Stein equations of the form 
$
 \left\langle  a(x), \nabla^2 f(x) \right\rangle + \left\langle b(x), \nabla f(x) \right\rangle = h(x) - \EE[h(Z)]
$
for targets $Z \sim \mu$ the ergodic measure of SDE's of the form
$\mathrm{d}Z_t = a(t) \mathrm{d} B_t + b(X_t) \mathrm{d}t$; see e.g.\
\cite{BGL13,gorham2016measuring,FSW18}.  As in the Gaussian case, a solution $f_h$ to the Stein
equation is also provided by semigroup approach to Markov generators. 



 In this paper we shall use first order directional derivatives as basic building blocks of Stein operators. 
In Definition \ref{def:steclass} we define the \emph{canonical Stein derivative for $p$ in the direction
      $e$} as  
      $  \mathcal{T}_{e, p} \phi =  {\partial_e( p \, 
   \phi)} / {p}
$
acting on sufficiently smooth test functions $\phi $. 
  Using this building block, we define the \emph{Stein gradient operator}
   acting on real valued functions $f$, vector valued functions
   $\mathbf{f}$ or  matrix valued functions $\mathbf{F}$ as
   $\bullet \mapsto \mathcal{T}_{\nabla, p}  \bullet ={\nabla \left( p \bullet
     \right)} \ {p}= \sum_{i=1}^d e_i\mathcal{T}_{e_i, p}$
     where $\{ e_i, i =1, \ldots, d\}$ is the canonical basis of $\R^d$. 
Similarly we define the \emph{Stein divergence operator} acting on
   vector or matrix valued functions with compatible dimensions as  
  $\bullet \mapsto \mathcal{T}_{\mathrm{div}, p}  \bullet = {\mathrm{div}\left( p \bullet
    \right)}/ {p}.$ 
    As in the one-dimensional case, this operator satisfies a product rule  for all  smooth functions $f, g$, namely 
$
  \mathcal{T}_{e, p}(f\, g) =   (\mathcal{T}_{e, p}f) g +
  f (\partial_e g ) 
$
for all $e $ in the $d$-dimensional unit sphere $S^{d-1}$. 

Our framework
provides a mechanism for obtaining a broad
family
of Stein operators which we call \emph{standardizations} of the
canonical operator $\mathcal{T}_{\mathcal{D}, p}$, where
$\mathcal{D} = \nabla$ or ${\rm{div}}$ -- see Section
\ref{sec:first-defin-outl}.  We do not claim that all useful operators
from the literature are of this form -- for instance
\cite{bonis2015rates} proposes an alternative construction. We do
demonstrate, however, that the operators we construct provide crucial
handles on a wide family of multivariable distributions.

For example, setting $\rho = \grad \log p = \mathcal{T}_{\nabla, p}1$
the multivariate score function, the analog to the one-dimensional
score-Stein operator is the vector valued operator
$      \mathcal{A}_pg = \nabla g  +   \rho \, g$
    acting on differentiable functions $g : \R^d \to \R$, see
    Definition \ref{def:multiscore}. Many authors call this
    score-Stein operator \emph{the} Stein operator for multivariate
    $p$; see e.g.\ \cite[definition 2.1]{liu2016kernelized},
    \cite[Section 2.1]{chwialkowski2016kernel}, \cite[Section
    2.2.2]{oates2017control}. 
    
    A second set of Stein operators are related to Stein kernels.  In
    dimension $d=1$, 
    this is
   the unique bounded solution $x \mapsto {\tau_p}(x)$  in
    $\mathcal{F}(p)$ to the ODE
    $ \mathcal{T}_p({\tau_p}(x)) =\nu-x,$ given by
    \begin{equation} \label{steinkernel1d}
{{\tau_p}}(x) = \frac{1}{p(x)} \int_x^{\infty} (y-\nu) p(y) dy, \quad x \in \mathcal{I}
\end{equation}
({with} $\nu = \E_p[X]$).  Properties of the Stein kernel were
first studied in \cite{stein1986} (although it had long been a known
important handle on smooth densities $p$, see e.g.\ \cite{CP89,C82},
who refer to it as a covariance kernel).  The Stein kernel has become
a powerful tool in Stein's method, see e.g.~\cite{CPU94,NoVi09, NP11}
or the more recent works \cite{F2018,courtade2017existence}.  In one
dimension, Stein kernels are unique when they exist - the Stein kernel
is the zero bias density from \cite{goldstein1997stein}. They have
been studied in detail in \cite{ernst2019first,saumard2018weighted}.  In higher
dimensions even the definition of a Stein kernel is not obvious; see
\cite{nourdin2013entropy,nourdin2013integration,courtade2017existence,
  ledoux2015stein}.  The zero-bias coupling definition in
\cite{goldstein2005zero} uses a collection of random elements in
higher dimensions. In analogy, in Definition \ref{def:multiteikdef} we
define a {\it{directional Stein kernel}} for each canonical direction
$e_i \in \R^d$, as any function
$x \mapsto \tau_{p,i}(x) \in \R\times \R^d$ belonging to
$\mathcal{F}(p)$ such that
$ \mathcal{T}_{\mathrm{div}, p}\big(\tau_{p, i}(x)\big) =\E_p[X_i]-
x_i$ for all $x \in \mbox{supp}(p). $ A \emph{Stein kernel} is then
\emph{any} square matrix $ \pmb\tau$ such that each line
$ \pmb \tau_i = (\tau_{i1}, \ldots, \tau_{id})$ is a kernel in the
direction $e_i$, and the \emph{kernel-Stein operator} is the operator
$
  \mathcal{A}_pg(x) =  \pmb\tau_p(x) \nabla g(x)- (x-\nu)g(x).
$
Stein kernels need not exist and may not be unique when they exist.
Aside from discussing this issue in Section
\ref{sec:stein-discrepancies-1}, we also give {an}
explicit
 --
formula for Stein kernels of elliptical distributions in Section
\ref{sec:ellipt-distr}. 
 In particular, we
obtain new identities even for the standard Gaussian: aside from
Stein's classical Gaussian covariance identity
\begin{equation}\label{steincov}
X \sim \mathcal{N}(\nu, \Sigma)  \Rightarrow   \E [\Sigma \nabla g(X)
] = \E [(X-\nu) g(X)] 
\end{equation}
we also prove that 
for
all $\beta \neq 2$ and $d \neq 1$, if
$X \sim \mathcal{N}(\nu, \Sigma) $ then
\begin{equation*}
\frac{\beta}{(\beta-2)(d-1)}  \EE \left[\big(X^T\Sigma^{-1}X\Sigma -X
  X^T \big) \nabla g(X) \right]  = \EE \left[ \frac{2}{\beta-2}
  \Sigma \nabla  g(X)  + X g(X)\right];
\end{equation*} 
both identities hold for a wide family of test functions $g$, see
Section \ref{sec:stein-oper-gauss}. Also, in the case of the
multivariate Student $t$-distribution, we obtain the identity
\begin{equation*}
X \sim t_k(\nu, \Sigma) \Rightarrow    \E  \left[ (X X^T+k I_d)\nabla
  g(X)  \right] =  (k-1)\E  \left[ X
    g(X)\right]
\end{equation*}
which 
generalizes 
 the corresponding
univariate result (see Section \ref{sec:mult-stud-t}).  We also
provide, in Section \ref{sec:stein-discrepancies-1}, a connection
between Stein kernels and transport maps. These results are closely
linked to the seminal works \cite{artstein2004solution,
  BaBaNa03,ABBNptrf}; we show that the key quantity they introduce
to solve e.g.\ Shannon's conjecture actually is a Stein kernel. In
particular, these results give new and explicit expressions for Stein
kernels in $d=2$ and $d=3$ dimensions.

A natural question is which Stein operator to choose. Here practical
issues may serve as guidance: solutions to multivariate Stein
equations may not generally exist. In the case of densities which
possess a Poincar\'e constant, weak solutions of second-order Stein
equations exist, and some regularity results for these solutions are
available, {see} 
\cite{MR2091552,
  MR2573554,ChMe08,valiant2010clt, gorham2016measuring,FSW18,GMS18}, see 
  Section
\ref{sec:bound-stein-discr}.
   
We {illustrate} our framework 
{by assessing} the (1-)Wasserstein distance between two absolutely
continuous distributions. This result leads to {three
  applications}: assessing the difference between copulas,
assessing
the distance between prior and posterior in a Bayesian setting,
{and bounding the}
Wasserstein distance between skew-normal and standard normal
distributions.

In practical applications and for obtaining information inequalities,
the {\it{Stein discrepancy}} plays a key role. The Stein discrepancy
from distribution $p$ to distribution $q$ using the Stein operator
$\AAA_p$ for $p$ is given in \eqref{eq:stediscrepancy};
$
  \mathcal{S}_{\left\| \cdot \right\|}(q, \mathcal{A}_p, \mathcal{G}) = \sup_{g \in
    \mathcal{G}} \left\| \E \left[ \mathcal{A}_pg(Y) \right] \right\|.
$
Here $Y \sim q$ and $\mathcal{G}$ is a class of smooth test
functions. A particular case is that of kernelized Stein
discrepancies, introduced in
\cite{liu2016kernelized,chwialkowski2016kernel}. We assess the
performance of such a kernelized Stein discrepancy measure for
simulations from a centred Student $t$-distribution, employing a Stein
discrepancy for a goodness-of-fit test.


The paper is structured as follows. Section \ref{sec:notations-1}
introduces the notations and the product rule.
 Section \ref{sec:general-theory} gives the
general Stein operators, Stein identities, and Stein
characterizations. Score-Stein operators and kernel-Stein operators
are introduced, and properties of the solution of Stein equations are
given for densities which admit a Poincar\'e constant. Applications of
this framework are given in Section \ref{sec:applications}. Then the
paper turns its attention to Stein kernels. In Section
\ref{sec:ellipt-distr}, Stein kernels for elliptical distributions are
derived. Section \ref{sec:stein-kernel} provides the general
discussion on Stein kernels. Finally, Section
\ref{sec:stein-discrepancies} discusses information metrics,
kernelized Stein discrepancies, and ends with a goodness-of-fit test
based on Stein discrepancies.

\section{Notations, gradients and product rules}
\label{sec:notations-1}

In this paper we use small caps for elements of
{$\R^m, m\ge 1$ (by convention column vectors),}
  capitalized letters for matrices in $\R^m\times \R^n$,
{small caps for real-valued functions,} boldfaced
  small caps for vector valued functions and boldfaced capitalized
  letters for matrix valued functions.

Fix $d \in \N_0$ and let $e_1, \ldots, e_d$ be the canonical basis for
Cartesian coordinates in $\R^d$ .
Given $x, y \in \R^d$ and any symmetric
positive-definite $d\times d$ matrix ${A}$ we set
$ \left\langle x, y \right\rangle_{A} = x^T {A} y$ (here $\cdot^T$
denotes the 
 transpose)
  with associated norm
$\|x\|_{A} = \sqrt{\left\langle x, x \right\rangle_{A}}$. 
With $\mathrm{Tr}(\cdot)$  the trace
operator, 
 the {\it{Hilbert-Schmidt scalar product}}  between  matrices ${A}, {B}$ of compatible
  dimension is
$\left\langle {A}, {B} \right\rangle_{\mathrm{HS}} = \mathrm{Tr}({A}
{B}^T)$, with associated norm
$\|{A}\|_{HS}^2 := {\mbox{Tr} ({A}^T{A})} = {\sum_{i,j=1}^d
  a_{i,j}^2}$.

Let $S^{d-1}$ denote the unit sphere in $\R^d$ and let $e \in S^{d-1}$
be a unit vector in $\R^d$. The directional derivative of a smooth
function $v:\R^d \to\R$ in the direction $e$ is the function
    $\partial_e v: \R^d \to \R$ given by
\begin{equation}
  \label{eq:45}
  \partial_ev(x) = \lim_{h \to 0} \frac{v(x+he)-v(x)}{h},
\end{equation}
at every point where  this limit exists. For $ i=1, \ldots, d$ we write $\partial_iv$ for the
derivative in the direction of the unit vector $e_i$.   The
\emph{gradient} of a smooth function $v :\R^d \to\R$ is
\begin{equation}
  \label{eq:7}
  \nabla v = {\mathrm{grad}(v)=} \left( {\partial_1 v}, \ldots,
  {\partial_d v} \right)^T = \sum_{i=1}^d \left({\partial_i v}\right)
e_i
\end{equation}
(by convention, a column vector). The \emph{gradient} of a
$1 \times m$ vector field $\mathbf{v}=(v_1, \ldots, v_m)$ {(a line
vector)} is
\begin{equation}
  \label{eq:40}
  \nabla \mathbf{v}=
  \begin{pmatrix}
    \nabla v_1 &  \cdots & \nabla v_m
  \end{pmatrix} =
  \begin{pmatrix}
     \partial_1 v_1 & \cdots & \partial_1 v_m \\
         \vdots    & \ddots  & \vdots \\
    \partial_d v_1 & \cdots & \partial_d v_m 
  \end{pmatrix}.
\end{equation}
(by convention, a $d\times m$ matrix). 

%
The \emph{Jacobian matrix} of a
$\R^m$-valued function
$\mathbf{w} = \left( w_1, \ldots, w_m \right)^T$ (a column vector) is
the $m \times d$ matrix
 $  \mathrm{Jac}(\mathbf{w})=
  \big(  \partial_i w_j
  \big)_{1\le i \le m, 1\le j \le d}.$
{As} $\mathrm{Jac}(\mathbf{v}^T)= \nabla \mathbf{v}$ 
  we will simply use the
  generic notation $\nabla$ for both operations from here onwards. The \emph{divergence} operator is defined for $d$ valued (line or
column) vector fields $\mathbf{v}$ with components
$v_j, j = 1, \ldots, d$ as
$$\mathrm{div}(\mathbf{v})
= \mathrm{Tr} \left(\nabla\mathbf{v} \right) =
\sum_{i=1}^d \partial_i v_i.$$

{
More generally, given any $m, n \in \N$, the directional derivative of
a matrix valued function
      \begin{eqnarray} \label{fform} \mathbf{F}: \R^d \rightarrow \R^{m}\times \R^d : \mathbf{x} \mapsto
\mathbf{F}(\mathbf{x}) =
\begin{pmatrix}
  \mathbf{f}_{1}(\mathbf{x})\\
\vdots \\
\mathbf{f}_{m}(\mathbf{x})
\end{pmatrix}
=
\begin{pmatrix}
  f_{11}(\mathbf{x}) & \ldots & f_{1d}(\mathbf{x}) \\
  \vdots & \ddots & \vdots \\
  f_{m1}(\mathbf{x}) & \ldots & f_{md}(\mathbf{x}) \\
\end{pmatrix}
\end{eqnarray} 
  is defined componentwise; 
$(\partial_e \mathbf{F}(x))_{1\le i \le m, 1 \le j \le n} =
(\partial_e {f}_{ij}(x))_{1\le i \le m, 1 \le j \le n}$ for all
$e\in S^{d-1}$ and all $x \in \R^d$.
      The gradient of a $m\times r$ matrix valued function $\mathbf{F}$ of the form \eqref{fform} 
     {is defined} as the $d \times r \times m$ tensor with entry
      $(\nabla F)_{i, j, k} = \partial_i f_{jk}$,
      $1 \le i \le d, 1 \le j \le m, 1 \le k \le r$.
}
The divergence of
{$\mathbf{F}$}
   as in \eqref{fform} is defined as the $m \times 1$ column vector with
components $\mathrm{div}(\mathbf{f}_j), j= 1, \ldots, m$; so that
\begin{align*}
  \mathrm{div}(\mathbf{F})
 & = 
  \begin{pmatrix}
 \mathrm{div} (\mathbf{f}_{1}) \\
\vdots  \\
    \mathrm{div}(\mathbf{f}_{m})
  \end{pmatrix}
 =
\begin{pmatrix}
  \sum\limits_{i=1}^d \frac{\partial f_{1i}}{\partial x_i} \\
 \vdots \\
\sum\limits_{i=1}^d \frac{\partial f_{mi}}{\partial x_i}
\end{pmatrix}.
\end{align*}
Similarly, the divergence of
$\mathbf{F}= \left( \mathbf{f}_1, \ldots, \mathbf{f}_m \right)$ with
values in $\R^d \times \R^m$ is
$\mathrm{div}(\mathbf{F}) = (\mathrm{div}(\mathbf{F}^T))^T$.
\subsubsection*{The product rules}

Given $v$ and $w$ two sufficiently smooth functions from
$\R^d \to \R$ and $e \in S^{d-1}$,  the directional derivative satisfies the \emph{product
  rule}
\begin{equation}
  \label{eq:46}
  \partial_e \left( v(x) w(x) \right) = (\partial_e v(x)) w(x) + v(x) (\partial_ew(x),  \quad
 \mbox{ or short: } \partial_e (v w) = (\partial_e v) w + v \partial_e w, 
\end{equation}
for all $x \in \R^d$ at which all derivatives are defined.  Gradients and divergences therefore also
satisfy the corresponding product rules. {We shall mainly consider three instances.}
\begin{enumerate} 
\item If
$\mathbf{v}$ is a $m$-dimensional vector field and
$\phi : \R^d \to \R$ then
\begin{equation}
  \label{eq:5}
  \nabla (\phi \mathbf{v}) = \phi (\nabla \mathbf{v})  +  (\nabla \phi) \mathbf{v}
\end{equation}
(a $d \times m$ matrix if $\mathbf{v}$ is a line, a $m\times d$ matrix
if $\mathbf{v}$ is a column). In particular if $m=d$ and $\mathbf{v}$ is a $d \times 1$ column vector, then
\begin{equation}\label{productrule}
 \mathrm{div}(\phi \mathbf{v})=     \phi \, \mathrm{div}
 (\mathbf{v}) + \left\langle \nabla \phi,  \mathbf{v} \right\rangle
\end{equation}   
(a scalar). 
When $\mathbf{v} = \nabla \psi$ is a gradient of a
sufficiently smooth function $\psi$, then
\begin{equation}
  \label{eq:productlapla}
   \mathrm{div}(\phi \nabla \psi )=     \phi \, \Delta \psi
 + \left\langle \nabla \phi,  \nabla \psi \right\rangle. 
\end{equation}
\item
For $\mathbf{v}\in \R^m$ and
$\mathbf{F}
$ a $m\times d$ square matrix
we have
\begin{equation}\label{eq:39}
  \mathrm{div}(\mathbf{v}^T \mathbf{F}) =   \left\langle
  \mathbf{v},  \mathrm{div}\mathbf{F}\right\rangle + \left\langle  \nabla
\mathbf{v},  \mathbf{F} \right\rangle_{\mathrm{HS}}
\end{equation}
(a scalar).  
\item 
If
$\mathbf{v}, \mathbf{w} \in \R^m$ and $A$ is a $m\times m$ positive
definite symmetric matrix then (with a slight abuse of notation)
\begin{equation}
  \label{eq:gradientproductrule}
 \nabla \left(\left\langle \mathbf{v}, \mathbf{w} \right\rangle_A \right)     =
 \left\langle \nabla 
  \mathbf{v}, \mathbf{w} \right\rangle_A + 
  \left\langle \mathbf{v},  \nabla
      \mathbf{w}  \right\rangle_A.
\end{equation} 
\end{enumerate} 


\section{Stein  operators and their standardizations}
\label{sec:first-defin-outl}

In this paper, all random vectors are assumed to admit a {probability density function (pdf)} 
$p$ with respect to the Lebesgue measure. {The support of $p$, denoted by $K_p$ , is} 
 the complement of the largest open set
$\mathcal O$ such that $p \equiv 0$ on $\mathcal O$. We further make
the following assumption:

\

\noindent \textbf{Assumption A}: There exists an open
set $\Omega_p$ such that the Lebesgue measure of
$K_p \backslash \Omega_p$ is zero, $p>0$ on $\Omega_p$ and $p$ is
$\mathcal C^1$ on $\Omega_p$.

\

If $p$ satisfies this assumption, then $p$ is $\mathcal C^1$
on $\Omega_p \cup K_p^C$,
which by Assumption A is a set of full Lebesgue measure.  For ease of
notation the Lebesgue measure is left out in integrals, so that
$\int_{\R^d} f(x) dx $ is written as $\int_{\R^d} f$. Similarly we
abbreviate $\E_p f = \int_{K_p} f p = \int_{K_p} f(x) p(x) \, dx$.

\subsection{Definition and some general comments}
\label{sec:general-theory}

\begin{definition}[Multivariate Stein class and operator]\label{def:steclass}
  Let $X$ be a $d$-dimensional random vector with pdf $p : \R^d \to \R$ satisfying Assumption A.
  \begin{enumerate}
  \item The \emph{canonical directional Stein class for
        $p$} {in direction $e$} is the vector field
      $\mathcal{F}_{1,e}(p)$ of all functions $\phi : \R^d \to \R$
      such that the restriction of $\phi$ to $\Omega_p$ is
      $\mathcal C^1$, $p\, \phi$ has integrable gradient and satisfies
      $ \int_{\R^d} \partial_e (p\, {\phi}) = {0}$.
  \item The \emph{canonical Stein derivative for $p$ in the direction $e$}
  is the operator $ \mathcal{T}_{e, p}$ on $\mathcal{F}_{1,e}(p)$
  defined by $\mathcal T_{e,p}\phi = \frac{\partial_e( p \, 
   \phi)}{p}
$. 
\end{enumerate} We set $\mathcal{F}_1(p)$ the collection of all
    scalar valued functions $f$ which belong to
    $\mathcal{F}_{1, e_i}(p)$ for all unit vectors
    $e_i, i=1, \ldots, d$, and define the Stein class of $p$, denoted
    $\mathcal{F}(p)$, as the collection of all scalar, vector and
    matrix valued functions whose components belong to
    $\mathcal{F}_1(p)$.
   \end{definition}
  Note in particular that if $\phi \in \mathcal F_1(p)$, then $\int _{\R^d} \partial_e(p \, \phi) = 0$ for all unit vectors $e$.
  {Furthermore,} any  function $\phi\in\mathcal{C}^1(\R^d)$ with compact support $K \subset \Omega_p \cup K_p^C$ lies in $\mathcal F_{1,e}(p)$. Indeed, if $e = (a_1,\ldots,a_d)$, since $p\phi \in \mathcal C^{1}(\R^d)$, we have $\partial_e (p\phi) = \sum_{i=1}^d a_i \partial_i(p\phi)$, then
  $$\int_{\R^d} \partial_i( p \phi) = \int_{\R^{d-1}} \left( \int_\R \partial_i(p \phi) dx_i\right)\prod_{j\neq i} dx_j,$$
  and the inside integral is zero since $p(x)\phi(x)$ cancels out for $x$ away enough from $0$.

  {On $\mathcal{F}(p)$ we define the \emph{Stein gradient operator}
    acting on real valued functions $f$, vector valued functions
    $\mathbf{f}$ or matrix valued functions $\mathbf{F}$, as
   $$\bullet \mapsto \mathcal{T}_{\nabla, p}  \bullet = \frac{\nabla \left( p \bullet
     \right)}{p}$$ (expressed in terms of $\mathcal{T}_{e, p}$'s as
   $ \mathcal{T}_{\nabla, p} = \sum_{i=1}^d e_i\mathcal{T}_{e_i, p}$);
   we also define the \emph{Stein divergence operator} acting on
   vector or matrix valued functions with compatible dimensions as  
  $$\bullet \mapsto \mathcal{T}_{\mathrm{div}, p}  \bullet =
  \frac{\mathrm{div}\left( p \bullet 
    \right)}{p}$$ (this operator also can be expressed in terms of
  $\mathcal{T}_{e, p}$'s). 
  Although {in the literature, it is} not $\mathcal{T}_{e, p}$ but 
    $\mathcal{T}_{\nabla, p}$ and $ \mathcal{T}_{\mathrm{div}, p}$
    that are the operators used,
  their properties follow 
  from those of {$(\mathcal{T}_{e, p}), e \in \mathcal{B}$ with $\mathcal{B}$ a  unit basis of $\R^d$}.
\begin{rem}\label{rem:stress0}
  Let $\mathcal{F}^{(0)}(p)$ denote the
  collection of functions (in any dimension) with mean 0 under $p$.
  The definition of $\mathcal{F}(p)$ is tailored to ensure
  $\mathcal{T}_{\mathcal{D}, p} (\mathbf{F}) \in \mathcal{F}^{(0)}(p)$
  for all $\mathbf{F} \in \mathcal{F}(p)$ (here and below
  $\mathcal{D}$ denotes $\partial_e, \nabla$ or $\mathrm{div}$). Hence, the Stein
  operators are also a machinery for producing mean 0 functions under
  $p$. This can be useful in applications, particularly when $p$ is
  intractable, since by construction the operators are invariant under
  scalar multiplication, and therefore do not depend on normalizing
  constants. 
\end{rem}

\subsubsection*{Stein identities}

Product rule \eqref{eq:46} directly leads to Stein-type product rules {of the type} 
\begin{equation}
  \label{eq:24}
  \mathcal{T}_{e, p}(f\, g) =   (\mathcal{T}_{e, p}f) g +
  f (\partial_e g ) 
\end{equation}
 {on a suitable set of functions.}
The next
    definition introduces such sets.

\begin{definition}[Stein adjoint class]
  To every (scalar/vector/matrix valued function)
    $\mathbf{F}\in \mathcal{F}(p)$ we associate
    $ \mathrm{dom}(p, \mathbf{F})$ the collection of all matrix-valued
    functions $\mathbf{G}$ with compatible dimensions such that $\mathbf{G}$ is $\mathcal C^1$ on $\Omega_p$, 
    $\mathbf{F} \mathbf{G} \in \mathcal{F}(p)\mbox{ and } \mathbf{F}
    (\partial_e \mathbf{G}) \in L^1(p)$ for all $e\in S^{d-1}$. 
\end{definition}

Using the
  product rules 
  from Section \ref{sec:notations-1}, we deduce similar identities for
  Stein gradients and divergences.  {Here we give two instances.} 
  \begin{enumerate}
  \item For all scalar functions  $f \in {\mathcal{F}}_1(p) $
      and $g \in \mathrm{dom}(p, f)$,  from \eqref{eq:5} 
\begin{align}
& \mathcal{T}_{\nabla, p} \left( f g\right) =
 ( \mathcal{T}_{\nabla, p} f) g + f  \nabla g. 
  \label{eq:28}
\end{align}
\item  For all  $\mathbf{F}:\R^d\to\R^m\times \R^d \in {\mathcal{F}}_1(p) $
    and
    $\mathbf{g}:\R^d\to\R\times \R^m\in \mathrm{dom}(p,
    \mathbf{F})$,  using \eqref{eq:39},
\begin{align}
  \label{eq:9}
  &  \mathcal{T}_{\mathrm{div}, p}(\mathbf{g}\mathbf{F}) =
   \left\langle
 \mathcal{T}_{\mathrm{div}, p} \mathbf{F}, \mathbf{g} \right\rangle   + 
\left\langle \mathbf{F},\nabla \mathbf{g} \right\rangle_{\mathrm{HS}}. 
\end{align}
 We deduce a family of Stein operators for $p$ by
fixing
$\mathbf{F} = \left( F_{ij} \right)_{1\le i \le m, 1 \le j \le d}$ and
considering $\mathcal{A}_p := \mathcal{A}_{\mathbf{F}, p}$ defined by
\begin{align}\label{eq:74}
\mathbf{g} \mapsto   \mathcal{A}_p  \mathbf{g} =  \left\langle
 \mathcal{T}_{\mathrm{div}, p} \mathbf{F}, \mathbf{g} \right\rangle   + 
\left\langle \mathbf{F},\nabla \mathbf{g} \right\rangle_{\mathrm{HS}}
\end{align}
with domain
$\mathcal{F}( \mathcal{A}_p) := \mathrm{dom}(p, \mathbf{F}) $. Note
that $\mathcal{A}_p(\mathbf{g}) $ is a scalar function.  We also
deduce the Stein identities (one for each $\mathbf{F}$)
\begin{equation}
  \label{eq:25}\E_p \left[  \left\langle
 \mathcal{T}_{\mathrm{div}, p} \mathbf{F}, \mathbf{g} \right\rangle \right]   = -\E_p \left[ 
   \left\langle \mathbf{F},\nabla \mathbf{g}
   \right\rangle_{\mathrm{HS}}  \right] \mbox{ for all } \mathbf{g}
 \in \mathcal{F}(\mathcal{A}_p). 
\end{equation} 
More 
product rules are provided
later, see e.g.\  \eqref{eq:31},
\eqref{eq:prodrulenabla}.
\end{enumerate}


It follows from the
    definition of {the  classes of functions $ \mathrm{dom}(p, \mathbf{F})$ and} the Stein operators and the Stein class that the
    left hand side in each of \eqref{eq:24}, \eqref{eq:28}, and their variations such as
\eqref{eq:9}, \eqref{eq:74}, \eqref{eq:23}, \eqref{eq:31}, integrates to 0. These
probabilistic integration by parts formulas lead to identities known
as \emph{Stein (covariance) identities}; they are  inspired by
Stein's original Gaussian identity \eqref{eq:stlemma}.

\begin{prop}[Stein identities]\label{prop:intbp}
  For all $f {: \R^d \to \R} \in \mathcal{F}_1(p)$ 
  and all $e\in S^{d-1}$ we have
\begin{equation}
  \label{eq:10}
 \E_p \left[ f (\mathcal{T}_{e, p}g)   \right] = -
 \E_p \left[g (\partial_e f)  \right]
\end{equation}
for all $g {: \R^d \to \R} \in \mathrm{dom}(p, f)$
\end{prop}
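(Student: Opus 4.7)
The plan is to derive the identity by applying the Stein product rule \eqref{eq:24} in its symmetric form (swapping the roles of $f$ and $g$) and then exploiting the fact that the canonical directional Stein class is precisely engineered so that $\int_{\R^d}\partial_e(p\,\phi) = 0$ for $\phi \in \mathcal{F}_1(p)$.

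First I would invoke the product rule \eqref{eq:24}, which, written with the two arguments exchanged, reads
\begin{equation*}
\mathcal{T}_{e,p}(fg) = (\mathcal{T}_{e,p}g)\,f + g\,(\partial_e f).
\end{equation*}
Multiplying both sides by $p$ and rearranging yields the pointwise identity
\begin{equation*}
f\,(\mathcal{T}_{e,p}g)\,p = \partial_e(p\,fg) - g\,(\partial_e f)\,p,
\end{equation*}
valid on $\Omega_p$ (and a.e.\ on $\R^d$ under Assumption A). Integrating over $\R^d$ gives
\begin{equation*}
\E_p[f\,(\mathcal{T}_{e,p}g)] = \int_{\R^d}\partial_e(p\,fg) - \E_p[g\,(\partial_e f)].
\end{equation*}
The conclusion then follows once the first integral on the right-hand side is shown to vanish.

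To vanquish that boundary term, I would appeal to the hypothesis $g \in \mathrm{dom}(p,f)$, which by the definition of the Stein adjoint class forces $fg \in \mathcal{F}(p)$, hence $fg \in \mathcal{F}_1(p)$. By the remark following Definition \ref{def:steclass}, every $\phi \in \mathcal{F}_1(p)$ satisfies $\int_{\R^d}\partial_e(p\,\phi) = 0$ for every $e \in S^{d-1}$; indeed, decomposing $e = \sum_{i=1}^d a_i e_i$, one has $\partial_e(p\,\phi) = \sum_i a_i\,\partial_i(p\,\phi)$, and each term integrates to zero by membership in $\mathcal{F}_{1,e_i}(p)$. Applying this to $\phi = fg$ eliminates the first integral and produces the claimed identity $\E_p[f\,\mathcal{T}_{e,p}g] = -\E_p[g\,\partial_e f]$.

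The only real subtlety here is bookkeeping rather than analysis: one must verify that all the integrability requirements implicit in writing $\E_p[f\,\mathcal{T}_{e,p}g]$ and $\E_p[g\,\partial_e f]$ are in fact guaranteed by the definition of $\mathrm{dom}(p,f)$. The clause $\mathbf{F}(\partial_e\mathbf{G}) \in L^1(p)$ in that definition takes care of the second expectation, while $\mathbf{F}\mathbf{G} \in \mathcal{F}(p)$ together with the product rule takes care of the first (and simultaneously of the vanishing of the $\partial_e(p\,fg)$ integral, since integrability of $\nabla(p\,fg)$ is built into the defining conditions for $\mathcal{F}_{1,e}(p)$). Thus no genuine analytic obstacle arises; the proposition is essentially a packaging of the product rule together with the vanishing property that is already encoded in the construction of the Stein class.
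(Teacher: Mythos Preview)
Your proof is correct and follows essentially the same approach as the paper's: both rearrange the product rule \eqref{eq:24} to write $f(\mathcal{T}_{e,p}g) = \mathcal{T}_{e,p}(fg) - g\,\partial_e f$ and then use $fg \in \mathcal{F}_1(p)$ (via $g \in \mathrm{dom}(p,f)$) to kill the expectation of $\mathcal{T}_{e,p}(fg)$. You simply unpack the step $\E_p[\mathcal{T}_{e,p}(fg)]=0$ more explicitly and do the integrability bookkeeping, whereas the paper states it in one line.
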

\begin{proof}
{Let $f \in \mathcal{F}_1(p)$ and $g \in \mathrm{dom}(p, f)$ and
  $e\in S^{d-1}$. Then, from \eqref{eq:24},
  $ f (\mathcal{T}_{e, p}g) = \mathcal{T}_{e, p}(f\, g) - g \partial_e
  f $. Since  $\E_p \mathcal{T}_{e, p}(f\, g) =0$ for all
  $f \in \mathcal{F}_1(p)$ and $g \in \mathrm{dom}(p,
  f)$, identity \eqref{eq:10} ensues.}
\end{proof} 

Identity \eqref{eq:10} {shows that} 
the
Stein operators  are \emph{the} skew adjoint operators to the classical directional
derivatives, with
respect to integration in $p$. In this sense
the operator from Definition~\ref{def:steclass} is ``canonical''.

  The Stein identities \eqref{eq:10} for $(e_1, \ldots, e_d)$ the
  standard unit basis in $\R^d$  yield corresponding Stein
  identities for the gradient Stein operator $\mathcal{T}_{\nabla, p}$
  and for the divergence Stein operator
  $ \mathcal{T}_{\mathrm{div}, p}$. {Here we give three instances.}
  \begin{enumerate}
  \item  For all 
  $f \in \mathcal{F}_1(p)$ and $ g \in \mathrm{dom}(p, f)$, 
  \begin{equation}
    \label{eq:11}
   \E_p \left[ (\mathcal{T}_{\nabla, p} f) g
  \right] = - 
    \E_p \left[ f (\nabla g) \right] `
  \end{equation}
  \item  For all $f \in \mathcal{F}(p)$ and $g$ such that $\nabla g\in \mathrm{dom}(p, f)$,
   from \eqref{eq:productlapla} we
  obtain 
  \begin{equation}
    \label{eq:23}
    \mathcal{T}_{\mathrm{div}, p}(f \nabla g) = f \Delta  g
    + \left\langle \mathcal{T}_{\nabla, p}f, \nabla  g \right\rangle. 
  \end{equation}
Consequently, 
\begin{equation}
  \label{eq:35}
\E_p \left[  f \Delta  g \right] = -\E_p \left[ 
    \left\langle \mathcal{T}_{\nabla, p}f, \nabla  g \right\rangle \right]
\end{equation}
for all such $g$.  
\item 
For all $m\times d$ square matrices $\mathbf{F}\in \mathcal{F}(p)$ and  $\mathbf{g}:\R^d\to\R^m\in \mathrm{dom}(p, \mathbf{F})$,   using \eqref{eq:39} gives 
  \begin{equation}
    \label{eq:31}
    \mathcal{T}_{\mathrm{div}, p}(\mathbf{g}^T \mathbf{F})  =
   \left\langle \mathbf{g}, \mathcal{T}_{\mathrm{div}, p}(\mathbf{F})  \right\rangle+
   \left\langle \nabla \mathbf{g}, \mathbf{F} \right\rangle_{\mathrm{HS}}. 
  \end{equation}
 Consequently,
\begin{equation}
  \label{eq:41}
 \E_p \left[ \left\langle \mathbf{g},  \mathcal{T}_{\mathrm{div},
        p}(\mathbf{F}) \right\rangle \right] = - 
 \E_p \left[ \left\langle \nabla \mathbf{g},  \mathbf{F}  \right\rangle_{\mathrm{HS}} \right] .
\end{equation}
\end{enumerate} 

\subsubsection*{Stein characterizations}

Recall that $\E_p \left[ \mathcal{T}_{e, p} \mathbf{F} \right] =0$ for
every $ \mathbf{F}\in \mathcal{F}(p).$ Under well chosen assumptions, {the collection of operators 
$\mathcal{T}_{e_i, p}$ for  $(e_1, \ldots, e_d)$ the standard unit basis of $\R^d$ characterises $p$ in the sense that} 
if
$\E_q \left[ \mathcal{T}_{e_i, p} \mathbf{F} \right] = 0, i=1,
    \ldots, d$ for a sufficiently wide class of $\mathbf{F}$,
then
necessarily $q = p$.

\begin{theorem}[Stein characterizations]\label{sec:diff-stein-oper-directional}
  Let $X \sim p$ and $Y\sim q$; assume that $p$ and $q$ both satisfy
  Assumption A. Assume moreover that $\Omega_p = \Omega_q$, and that
  this set is connected. Let $f: \R^d \rightarrow \R \in
      \mathcal{F}_1(p)$, and assume ${f}>0$ on $\Omega_p $. 
      \begin{enumerate}
      \item It holds that $Y
  \stackrel{\mathcal{L}}{=} X$ if and only if for all{ {
      $\mathbf g : \R^d \rightarrow \R^d \in \mathrm{dom}(p, {f})$}}  \begin{equation}
  \label{eq:steskewidendirectional}
   \E_q \left[ (\mathcal{T}_{\dv, p} \mathbf g) f
  \right] = - 
    \E_q \left[ \langle  \mathbf g , \nabla f \rangle\right] .
  \end{equation}
      \item \label{sec:diff-stein-oper}
It holds that $Y \stackrel{\mathcal{L}}{=} X$
  if and only if for all
  $g: \R^d \rightarrow \R \in \mathrm{dom}(p,f)$
  \begin{equation}
  \label{eq:steskewiden}
   \E_q \left[ (\mathcal{T}_{\nabla, p} g) f
  \right] = - 
    \E_q \left[ g (\nabla f) \right] .
  \end{equation}
      \end{enumerate}
\end{theorem}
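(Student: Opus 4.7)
The forward implications in both parts are immediate: if $Y\stackrel{\mathcal L}{=}X$ then $\E_q=\E_p$, so applying the scalar identity \eqref{eq:10} with the roles of $f$ and $g$ swapped and summing over the standard basis $(e_1,\ldots,e_d)$ produces \eqref{eq:steskewiden}; the vector-valued analogue \eqref{eq:steskewidendirectional} follows by expanding $\mathcal T_{\mathrm{div},p}\mathbf g = \sum_i \mathcal T_{e_i,p}g_i$ and invoking \eqref{eq:10} coordinatewise. The content of the theorem is therefore the two converses, and my plan is to prove them by repackaging each hypothesis as a single ``Stein mean-zero'' statement via a product rule and then running an integration-by-parts plus variational argument.

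For part \ref{sec:diff-stein-oper}, observe that the componentwise product rule \eqref{eq:46} gives
\begin{equation*}
\mathcal T_{\nabla,p}(fg) \;=\; \nabla(fg) + fg\,\nabla\log p \;=\; (\mathcal T_{\nabla,p}g)\,f + g\,\nabla f.
\end{equation*}
Hence hypothesis \eqref{eq:steskewiden} is equivalent to $\E_q\!\left[\mathcal T_{\nabla,p}(fg)\right]=0$ for every $g\in\mathrm{dom}(p,f)$. Setting $r:=q/p$ on $\Omega_p=\Omega_q$ and $h:=fg$, this condition reads $\int_{\Omega_p} \nabla(ph)\,r = 0$. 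The positivity assumption $f>0$ on $\Omega_p$, combined with the compact-support remark immediately following Definition~\ref{def:steclass}, ensures that $h$ ranges at least over all $\mathcal{C}^1$ functions compactly supported in $\Omega_p$ (from any such $h$ one recovers an admissible $g=h/f$, which is itself $\mathcal{C}^1$ with compact support in $\Omega_p$). Classical integration by parts, whose boundary terms vanish by compact support, transforms the condition into
\begin{equation*}
\int_{\Omega_p} p\,h\,\nabla r \;=\; 0 \qquad \text{for every } h\in\mathcal{C}^1_c(\Omega_p).
\end{equation*}
The fundamental lemma of the calculus of variations then yields $p\,\nabla r\equiv 0$ on $\Omega_p$; since $p>0$ there, $\nabla r\equiv 0$ on $\Omega_p$. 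Connectedness of $\Omega_p$ forces $r$ to be constant, and the normalization $\int p=\int q=1$ (together with Assumption A, which makes $\Omega_p\cup K_p^c$ a set of full Lebesgue measure) pins down $r\equiv 1$, so $q=p$ a.e.

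Part 1 goes through by the same mechanism. The specialization of the product rule \eqref{eq:9} to a scalar $f$ and a $d$-vector field $\mathbf g$ is $\mathcal T_{\mathrm{div},p}(f\mathbf g)=(\mathcal T_{\mathrm{div},p}\mathbf g)\,f + \langle\mathbf g,\nabla f\rangle$, so \eqref{eq:steskewidendirectional} is equivalent to $\E_q\!\left[\mathcal T_{\mathrm{div},p}(f\mathbf g)\right]=0$. Writing $\mathbf h=f\mathbf g$ and repeating the computation above produces $\int_{\Omega_p} p\,\langle\mathbf h,\nabla r\rangle=0$ for every $\mathbf h\in\mathcal{C}^1_c(\Omega_p;\R^d)$, whence $\nabla r\equiv 0$ and $q=p$ as before. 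The main obstacle is the variational step: one must verify that the class traced out by $fg$ (respectively $f\mathbf g$) as $g$ (resp.\ $\mathbf g$) varies over $\mathrm{dom}(p,f)$ is rich enough to conclude $\nabla r\equiv 0$, and that the IBP creates no boundary contribution. Both issues are dispatched by the hypothesis $f>0$ on $\Omega_p$ and the compact-support remark following Definition~\ref{def:steclass}; without the positivity of $f$ the reverse direction would break down.
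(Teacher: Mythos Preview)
Your proof is correct and follows essentially the same route as the paper's: restrict to compactly supported test functions, integrate by parts against $r=q/p$, and invoke the fundamental lemma plus connectedness to force $\nabla r\equiv 0$. Your substitution $h=fg$ (resp.\ $\mathbf h=f\mathbf g$) is a tidy repackaging of what the paper does in two steps---apply the divergence theorem to $\mathbf g$ directly and then expand $\nabla(f\,q/p)$ via the product rule---but the underlying argument is identical.
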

  
\begin{proof}
First let $f: \R^d \rightarrow \R \in \mathcal{F}_1(p)$ and $\mathbf g : \R^d \rightarrow \R^d \in \mathrm{dom}(p, {f})$. 
By the product rule \eqref{productrule}, 
$ f \dv (p \mathbf g)  = \dv(f p \mathbf g )  -  p \langle \nabla f, \mathbf g\rangle$
and hence 
$ f \mathcal{T}_{\dv, p} \mathbf g = \mathcal{T}_{\dv, p} (f  \mathbf g )  -  \langle \nabla f, \mathbf g\rangle.$
Taking expectations gives 
$$ \E_p [ f \mathcal{T}_{\dv, p} \mathbf g ]  =  \E_p [
\mathcal{T}_{\dv, p} (f \mathbf g ) ] - \E_p [\langle \nabla f,
\mathbf g\rangle].$$ By construction of $\mathrm{dom}(p, f)$,
$\E_p [\mathcal{T}_{\dv, p} (f \mathbf g )] =0$ and the only-if
direction follows.  

For the converse direction, let
$\mathbf g$ be an infinitely differentiable vector field with
compact support $K \subset\Omega_p$; recall that in this case $g \in\mathrm{dom}(p, {f})$.
Suppose that {\eqref{eq:steskewidendirectional} holds.} 
Re-writing the expectations, 
$$ \int_K \dv (p \mathbf g) f \frac{q}{p} = 
- \int_K q \langle \nabla f, \mathbf g\rangle
.
$$
Now, by the divergence (or Ostrogradski) theorem, and since $\mathbf g \equiv 0$ on $\partial K$,
$$ \int_K \dv (p \mathbf g) f \frac{q}{p} =- \int_K p \,\bigg\langle
\mathbf g, \nabla \left( f\frac{q}{p} \right)\bigg\rangle.$$ Thus
$\int_K p \,\bigg\langle \mathbf g, \nabla \left( f\frac{q}{p}
\right)\bigg\rangle = \int_K \langle \mathbf g, \nabla g\rangle
q$. Hence $p \nabla \left( f\frac{q}{p} \right) = q\nabla g$ in the
weak sense on $\Omega_p$; but it is also true in the strong sense,
because all functions are continuous (recall that
$\Omega_p=\Omega_q$). We deduce that
$f \nabla(q/p) = 0, \quad \text{on } \Omega_p,$
but since $f >0$ on this connected domain, $q/p$ is constant on
$\Omega_p$. Finally, using $\int_{\Omega_p} p = \int_{\Omega_p} q$ gives  the first result.

For Item \ref{sec:diff-stein-oper},  proceeding as for the first part, we arrive at 
 the differential equation  
$  p \nabla \left( f \frac{q}{p} \right) = q\nabla f$
 from which 
 we deduce that $p/q = 1$. 
\end{proof} 
  

The proof of the claims in Theorem \ref{sec:diff-stein-oper-directional} are
greatly facilitated by the tailored assumptions which hide some
difficulties. It still remains to write such characterizations out in
detail for specific targets; this can turn out to be quite complicated
and will not be the focus of the paper. Also, 
there are many
more general ways to formulate similar characterizations as in Theorem
\ref{sec:diff-stein-oper-directional}
 be it by changing the starting operator, by
relaxing the assumptions on the test functions or indeed by relaxing
the assumptions on $Y$.

\subsection{Standardizations}
\label{sec:standardizations}

{Similarly as in the univariate case (see \cite[Section
  4]{LRS16}),} we introduce a broad family of Stein operators
$\mathcal{A}_p : \mathcal{F}(\mathcal{A}_p) \to \mathcal{F}^{(0)}(p)$
which we call \emph{standardizations} of the canonical operator
$\mathcal{T}_{\mathcal{D}, p}$, where $\mathcal{D} = \nabla$ or
${\rm{div}}$. These are operators such that there exists a
transformation
$\mathbf{T} : \mathcal{F}(\mathcal{A}_p) \to \mathcal{F}(p) :
\mathbf{u} \mapsto \mathbf{T}(\mathbf{u})$ such that
$\mathcal{A}_p(\mathbf{u}) = \mathcal{T}_p(\mathbf{T}(\mathbf{u}))$
for all $\mathbf{u} \in \mathcal{F}(\mathcal{A}_p)$.  Such
standardizations were studied in \cite{LRS16}; \cite[Appendix
A.2]{oates2017posterior} suggests some standardizations for densities
on a manifold. Here we concentrate on standardizations which are
connected to score functions and Stein kernels.

\subsubsection{Gradient based operators and the score function}
 \label{sec:grad-based-oper}
 
 In this section we focus on the Stein identity \eqref{eq:11}, 
which
 arises from the product rule \eqref{eq:28} extended as
\begin{equation}
  \label{eq:prodrulenabla}
  \mathcal{T}_{\nabla, p}(f \mathbf{g}) =
(\mathcal{T}_{\nabla, p}  f) \mathbf{g}  + f \nabla \mathbf{g}
\end{equation}
for $f\in \mathcal{F}_1(p)$ and
$\mathbf{g}: \R^d \rightarrow \R^m \in \mathrm{dom}(p,f)$.
From this, we deduce a family of Stein operators for $p$ obtained by
fixing some $f\in \mathcal{F}(p)$ and considering the operator
$ \mathcal{A}_p := \mathcal{A}_{f, p}$ given in
\eqref{eq:prodrulenabla} acting through
$\mathbf{g} \mapsto \mathcal{A}_p \mathbf{g}:=\mathcal{T}_{\nabla,
  p}(f \mathbf{g})$ on
$\mathbf{g}:\R^d\to \times \R^{m} \in \mathcal{F}(\mathcal{A}_p) =
\mathrm{dom}(p, f)$.  Note that $\mathcal{A}_p(\mathbf{g})$ is a
$d \times m$ matrix, and
$\E_p [\mathcal{A}_p \mathbf{g}] = \mathbf{0}$ for all
$\mathbf{g} \in \mathrm{dom}(p,
f)$.  
Each particular $f \in \mathcal{F}_1(p)$ thus gives rise to a Stein
identity
\begin{equation}
  \label{eq:8}\E_p \left[  (\mathcal{T}_{\nabla, p}  f)\mathbf{g}  \right] = -\E_p
\left[  f \nabla \mathbf{g}\right] \mbox{ for all } \mathbf{g} \in
\mathcal{F}(\mathcal{A}_p). 
\end{equation}
%
 One particular {choice for $f$}  stands out:
$f = 1$. The following definition is classical.
\begin{definition}[Score function] \label{def:multiscore}
  Let $p$ be differentiable. The \emph{score function} of $p$ is the function defined on $\Omega_p$ by
\begin{equation}
  \label{eq:defscoregen}
  \rho_p(x) = \nabla \log p(x) = \frac{1}{p(x)} \nabla p(x).
\end{equation}
The \emph{score-Stein operator} is the vector valued operator
\begin{equation}
\label{eq:1}
      \mathcal{A}_pg = \nabla g  +  \rho_p \, g
    \end{equation}
    acting on differentiable functions $g : \R^d \to \R$.
\end{definition}
  \begin{rem}
    One can easily extend Definition \ref{def:multiscore} to
    introduce  a score (and score-Stein operator) in any direction
    $e\in S^{d-1}$, by considering the gradient $\nabla = (\partial_e,
    \partial_{e^{\perp}})$ along $e$.  
\end{rem}
Equation \eqref{eq:1} is often called \emph{the} Stein operator for
multivariate $p$; see e.g.\ \cite[definition 2.1]{liu2016kernelized},
\cite[Section 2.1]{chwialkowski2016kernel}, \cite[Section
2.2.2]{oates2017control}.  This operator is particularly interesting
when constant functions $1 \in \mathcal{F}_{{1}}(p)$, an assumption
which holds if $p$ is a differentiable density such that
$\partial_i p$ is integrable for all $i = 1, \ldots, d$ and
$\int_{\R^d} \partial_ip = 0$.
It is easy to see that
$\mathcal{C}_0^{\infty}(\R^d) \subset \mathrm{dom}(p, 1)$. The
resulting (characterizing) Stein identity is
 \begin{equation}
   \label{eq:3}
\E_p \left[ \rho_p \, g
   \right] = -  \E_p \left[ \nabla g \right] \mbox{ for all }g \in \mathcal{F}(\mathcal{A}_p).
 \end{equation}
 Equation \eqref{eq:3} is reminiscent of the classical Stein identity
 \eqref{eq:stlemma}; one can easily check that
 $\rho_{\phi}(x) = \nabla \log \phi(x) = -\Sigma^{-1} (x-\nu)$ when
 $X\sim \phi$ the multivariate normal density with mean $\nu$ and
 covariance $\Sigma$. 
{We shall see that}  for the Student-t distribution  the choice of a non-constant $f$ in \eqref{eq:8} leads to
 more tractable operators and identities.  
 We will discuss operator
 \eqref{eq:1} and Stein identity \eqref{eq:3} -- and variations
 thereon -- in the more general context of score functions and Fisher
 information in Section \ref{sec:kern-stein-discr}.

\subsubsection{Divergence based first order operators and Stein kernels}
\label{sec:diverg-based-first}

In this section, we focus on the product rule \eqref{productrule}
rewritten as
\begin{align}
\label{eq:42}
  \mathcal{T}_{\mathrm{div}, p}(\mathbf{F}g) =
  (\mathcal{T}_{\mathrm{div}, p} \mathbf{F}) g + \mathbf{F} \nabla g
\end{align}
for all properly chosen $\mathbf{F}:\R^d\to\R^m\times \R^d$ and all
$g : \R^d\to \R$. By construction,
$\mathcal{T}_{\mathrm{div}, p}(\mathbf{F}g)\in \R^m$ for all
$\mathbf{F}, g$.  From this we deduce a family of Stein operators for
$p$ obtained by fixing $\mathbf{F}$ in $\mathcal{F}(p)$ and
considering $\mathcal{A}_p := \mathcal{A}_{\mathbf{F}, p}$ defined by
${g} \mapsto \mathcal{A}_p {g} = (\mathcal{T}_{\mathrm{div}, p}
\mathbf{F}) g + \mathbf{F} \nabla g $ with domain
$\mathcal{F}( \mathcal{A}_p) := \mathrm{dom}(p, \mathbf{F}) $.  We
also deduce the Stein identities (one for each $\mathbf{F}$)
\begin{equation}
\label{eq:37}\E_p \left[   
(\mathcal{T}_{\mathrm{div}, p} \mathbf{F}) {g}  \right]   = -\E_p \left[ 
   \mathbf{F}\nabla {g} \right] \mbox{ for all } {g} \in \mathcal{F}(\mathcal{A}_p).
\end{equation}

One particularly important choice of $\mathbf{F}$ {in \eqref{eq:37}} 
is the \emph{Stein kernel} first
defined in dimension $d=1$ in \cite{stein1986}
. Here we consider general dimension $d \ge 1$ and
propose the following definition.

\begin{definition}[Stein kernel]\label{def:multiteikdef}
  Consider a density $p : \R^d \to \R^+$ satisfying Assumption A, and
  suppose $p$ has finite mean $\nu \in \R^d$. For each canonical
  direction $e_i \in \R^d$, $i=1, \ldots, d$, \emph{a Stein kernel}
  for $p$ \emph{in the direction $e_i$} is any vector field
  $x \mapsto \tau_{p,i}(x) \in \R^d$ belonging to $\mathcal{F}(p)$
  such that
  \begin{equation}
\label{eq:65}
    \mathcal{T}_{\mathrm{div}, p}\big(\tau_{p, i}(x)\big) = \nu_i- x_i
  \end{equation}
  for all $x \in \Omega_p$. A \emph{Stein kernel} is any square matrix
  $ \pmb\tau_p =(\tau_{i,j})_{1 \le i,j \le d}$ such that each line
  $ \pmb \tau_i = (\tau_{i1}, \ldots, \tau_{id})$ is a kernel in the
  direction $e_i$.  The $\pmb\tau_p$-\emph{kernel-Stein operator} is
  the $\R^d$-valued operator
\begin{equation}
  \label{eq:29}
  \mathcal{A}_pg(x) =  \pmb\tau_p(x) \nabla g(x)- (x-\nu)g(x)
\end{equation}
with domain $\mathcal{F}(\mathcal{A}_p) = \mathrm{dom}(p, \pmb\tau_p)$.  
  \end{definition} 
  
  When the kernel is clear, we leave out the $\pmb\tau_p$-prefix in
  the kernel-Stein operator.  
  {To illustrate the concept we give two examples; Section \ref{sec:stein-kernel} contains more 
constructions for some cases.} 
  
  \begin{example}  \label{sec:diverg-based-first-1}
A Stein kernel in the sense
of Definition \ref{def:multiteikdef} is immediately obtained in the following way. For each $i=1, \ldots, d$ let 
$\tau_{p, i} = (0, \ldots, \tau_i, \ldots, 0)$ with 
\begin{equation*}
  \tau_i(x) = \frac{1}{p(x)} \int_{x_i}^{\infty} (\nu_i - u_i)
  p(x_{\setminus i}, u_i) du_i
\end{equation*}
where
$(x_{\setminus i}, u_i) = (x_1, \ldots, x_{i-1}, u_i, x_{i+1}, \ldots,
x_d)$. The diagonal matrix $\pmb \tau$ obtained by stacking up the
above vectors is a Stein kernel. If $X$ has finite first moment, this
Stein kernel is defined for almost every $x$ by Fubini's theorem. Such
diagonal kernels are often not interesting (mainly because the Stein
identity \eqref{eq:defstek2} {below} will not hold for a large
enough class of functions).
  \end{example}
  
\begin{example}\label{contstruction}
  The score function and the divergence operator are linked through 
  $$\mathcal{T}_{\mathrm{div, p}} \mathbf{F} = \mathbf{F} \rho_{p} +
  \mathrm{div} (\mathbf{F}) ,$$ where
  $\mathbf{F} \in {\mathcal{F}}(p)$.  Hence for a density $p$ with
  mean $\nu$, a Stein kernel is any matrix $\pmb \tau$ satisfying
  $$ {\pmb \tau}(x) \rho_p (x) + \mathrm{div}({\pmb \tau}(x)) =  -
  x$$ for all $x\in\Omega_p$.  A general construction {of} Stein
  kernels is thus given by finding an
  $\mathbf{F} \in {\mathcal{F}}(p)$ such that for some constants
  $\alpha, \beta \in \R$, $\alpha + \beta \ne 0$,
  \begin{eqnarray}
  \mathbf{F}(x)  \rho_{p} (x) &= & \alpha (x - \nu) + r(x) \label{constr1} \\
  \mathrm{div}(  \mathbf{F}(x) ) &=& \beta ( x - \nu) - r(x) \label{constr2} 
  \end{eqnarray}
($r(x)$ is \emph{any} function) and setting 
  $$ {\pmb \tau} = \frac{1}{\alpha + \beta}   \mathbf{F}.$$
  \end{example}
 
 \begin{rem}
 \begin{enumerate}
 \item 
  With Definition \ref{def:multiteikdef}, existence of the Stein
  kernel is readily checked, and the resulting Stein identity is
 \begin{equation}
  \label{eq:defstek2}
  \E \left[ \pmb{\tau}(X)\nabla g(X)
  \right] = \E \left[    (X-\nu) g(X)\right] \mbox{ for all } g \in
  \mathcal{F}(\mathcal{A}_p) = \mathrm{dom}(p, \pmb \tau). 
\end{equation}
  Equation \eqref{eq:defstek2} is
reminiscent of the classical Stein identity \eqref{eq:stlemma}; one
can easily check that $\pmb \tau_p(x) = \Sigma$ is a Stein kernel for the
multivariate standard Gaussian distribution.  
\item 
The question of whether a density $p$ admits a Stein kernel is non
trivial, if one wants the Stein identity to hold for a larger class of
functions than the regular ones with compact support in $\Omega_p$ (see
Section \ref{sec:stein-kernel}).
Definition \eqref{def:multiteikdef} {agrees with} the one of
\cite{courtade2017existence}, except for the class of test functions
we impose.
 %
\item 
The Stein kernel $\tau_{p,i} \in \mathcal F(p)$ in the
direction $e_i$ in Definition \eqref{def:multiteikdef} could be
defined equivalently by requiring
\begin{equation}
  \label{eq:steinkernelidentity} \E \left[ \langle \tau_{p,i}(X),
  \nabla g(X) \rangle \right] = \E \left[ (X_i-\nu_i) g(X)\right],
\end{equation}
for all $g \in \mathcal{C}^\infty_c(\Omega_p) $, the set of functions infinitely differentiable with compact support
in $\Omega_p$. Thus, showing some matrix is a Stein kernel can be done
equivalently by checking pointwisely \eqref{eq:65}, or by checking the
previous identity holds at least for infinitely differentiable functions with compact
support included in $\Omega_p$ (and that it is in $\mathcal F(p)$). However this class of functions is narrow, and
in most applications, one would want \eqref{eq:steinkernelidentity} to
hold for more test functions $g$ -- e.g. for bounded $g$. In this
work, we use the above definition to find Stein kernels, and we extend
the class of functions for which the identity holds on a case-by-case
basis. This will be done, for instance, for the Stein kernels we build
for elliptical distributions. From the divergence
theorem, when the boundary of $\Omega_p$ is smooth enough, any
$g\in \mathcal C^1(\Omega_p)$ such that $\tau g p \rightarrow 0$ on
the boundary of $\Omega_p$ will verify the identity
\eqref{eq:steinkernelidentity}.

    \end{enumerate}
    \end{rem}

\subsubsection{Divergence based second order operators}
  \label{sec:diverg-based-second}

  The starting point here is the divergence product rule \eqref{eq:39}
  and the corresponding Stein operator equation \eqref{eq:31},
  extended as follows.  Choose $\mathbf{A}, \mathbf{B}$ two
  $d \times d$ matrix valued functions in $\mathcal{F}(p)$. Then
  \begin{align}\label{eq:17}
\mathcal{T}_{\mathrm{div},p}\big((\nabla g)^T\mathbf{A}
    \mathbf{B}^T\big)   = 
\left\langle \nabla g, \mathcal{T}_{\mathrm{div, p}}(\mathbf{B}) \right\rangle_{\mathbf{A}}  +
  \left\langle  \nabla\left( (\nabla g)^T \mathbf{A}
    \right),   \mathbf{B} \right\rangle_{\mathrm{HS}}
  \end{align}
  for all $g \in\mathcal C^1(\Omega_p)$.  This leads to a family of second order
  scalar valued operators obtained by fixing $\mathbf{A}$ and/or $\mathbf{B}$
  and considering
  $ \mathcal{A}_p g := \mathcal{A}_{\mathbf{A}, \mathbf{B}, p}g$
  defined in \eqref{eq:17} with domain the collection of $g$ such that
  $ (\nabla g^T \mathbf{A} \in \mathrm{dom}(p, \mathbf{B})$. {We single out three particular choices for  $\mathbf{A},$ and $\mathbf{B}$.} 
\begin{enumerate}
 \item  $\mathbf{A} = \mathbf{B} = I_d$ yields
\begin{equation}
  \label{eq:A}
  \A_{p}  g  =  \left\langle \nabla \log p , \nabla g \right\rangle   +  \Delta g, 
\end{equation}
($\Delta$ being the Laplacian on $\R^d$) with domain
$\mathrm{dom}(\mathcal{A}_p)$;
\item $\mathbf{A} = I_d$ and $\mathbf{B} = \pmb \tau_p^T$ the
  (transpose of a) Stein kernel of $X$ yields
\begin{equation}
  \label{eq:axfnuhess}
  \mathcal{B}_p g =  \left\langle \nabla g, \nu-\bullet  \right\rangle +
  \left\langle  \nabla^2g,  \pmb
    \tau_p \right\rangle_{\mathrm{HS}}
\end{equation} 
(where $\nabla^2(f) = \nabla(\nabla f^T)$ is the Hessian of
$f$ and    $\nu$ is the mean of $X$) with domain $\mathrm{dom}(\mathcal{B}_p)$. 
\item $\mathbf{A}$ symmetric definite positive and
  $\mathbf{B}$  such that $\mathcal{T}_{\mathrm{div}, p}(\mathbf{B}) =
  \mathbf{b}$ yields
\begin{equation}
  \label{eq:axfnuhess2}
  \mathcal{C}_p g =  \left\langle \nabla g, \mathbf{b}
  \right\rangle_{\mathbf{A}} +  
  \left\langle  \nabla^2_{\mathbf{A}}g,   \mathbf{B} \right\rangle_{\mathrm{HS}}
\end{equation} 
($\nabla^2_{\mathbf{A}}g = \nabla\left( (\nabla g)^T \mathbf{A}
\right)$) with domain $\mathrm{dom}(\mathcal{C}_p)$.
 \end{enumerate}
 One recognizes in operators such as \eqref{eq:axfnuhess} and
 \eqref{eq:axfnuhess2} the infinitesimal generators of multivariate
 diffusions, see e.g.\ \cite{gorham2016measuring}.

 
\subsection{Stein equations and Stein factors}
\label{sec:bound-stein-discr}

Let $X\sim p$ with Stein canonical class and operator
$(\mathcal{F}(p), \mathcal{T}_{{\mathcal{D},}}p)$, where $\mathcal{D}$
denotes $\partial_e, \nabla$, or $\dv$, and consider some
standardization of the form
\begin{equation}
  \label{eq:geneop}
  \mathcal{A}_p : \mathcal{F}(\mathcal{A}_p) \to
  \mathrm{im}(\mathcal{A}_p) : g \mapsto \mathcal{A}_pg
\end{equation}
as detailed in Subsection~\ref{sec:standardizations}.

\begin{definition}[Stein's equations and (magic)
  factors]\label{def:steeq}
  Instate all previous notations. Let $\mathcal{H} \subset L^1(p)$ be
  a family of test functions, and suppose that
  $h - \E_ph \in \mathrm{im}(\mathcal{A}_p)$.
  The $(\mathcal{A}_p-\mathcal{H})$
  \emph{Stein equation} for $X$ is the family of differential
  equations
  \begin{equation}
  \label{eq:steqgen}
  \mathcal{A}_pg = h -  \E_ph; \quad h \in \mathcal{H}.
\end{equation}
 For a given $h$, a solution to
\eqref{eq:steqgen} is an absolutely continuous function $g_h$ such
that there exists a version of the derivatives for which
\eqref{eq:steqgen} is satisfied at all $x \in \R^d$. An
$(\mathcal{A}_p-\mathcal{H})$ \emph{Stein factor} for $X$ is
\emph{any} uniform bound on some moment of (derivatives of)
$g_h = \mathcal{A}_p^{-1}(h-{\E_ph})$ over all $h \in \mathcal{H}$ of
solutions to Stein equations \eqref{eq:steqgen}.
\end{definition}

{Solving, and bounding the solution of,} the
Stein equation \eqref{eq:steqgen} 
{is well studied} in the univariate case under quite general
conditions on $p$, see for example
\cite{stein1986,ChGoSh11,NP11, Do14,dobler2015iterative} {and
\cite{ernstswan2019}}.  Matters are much more complicated in the
multivariate setting. When $p$ is a multivariate Gaussian,
then \cite{Ba90} identified a solution of the Stein equation
\eqref{mvnstein} to be given by the Mehler formula \eqref{mehler}.
Such explicit dependence of solution $f_h$ on the function $h$ permits
to study regularity properties of $f_h$ in terms of those of $h$, see for example
\cite{MR2573554,MR2091552,ChMe08}.

{For solutions of the Stein equation here} we focus on the general score equation based on \eqref{eq:A},
namely
\begin{equation}
\label{eq:eqstein2}
\Delta u +  \langle \nabla \log p,  \nabla u \rangle = h - \E_p h.
\end{equation}
Bounds on the solution of this Stein equation are available  {(a)} when $p$
is strongly log-concave,  or {(b)} when $p$ admits a Poincar\'e constant. 
{For the first condition,} recall that a smooth density
$p$ is $k$-strongly log-concave for some $k>0$ if
$$\forall (x,y) \in \R^d \times \R^d , \; \left\langle  y-x , \nabla
  \log p(y) - \nabla \log p(x) \right\rangle \;< - k \|y-x\|^2.$$
\cite{mackey2016multivariate} proved that if $p$ is $k$-strongly
log-concave and $h$ is 1-Lipschitz,
then  \eqref{eq:eqstein2} admits
a solution $f=f_h$ such that for $p$-almost every $x$,
\begin{equation}
  \label{prop:mackey}
  |\nabla f_h(x) | \leq \frac{1}{k}.
\end{equation}
Similar bounds based on Malliavin calculus are obtained in
\cite{FSW18}.  
\cite{gorham2016measuring} computed uniform Stein factors for
diffusions satisfying ``distant dissipativity'' and, more generally,
for diffusions that couple rapidly.  This class admits {not only} 
strong log-concave densities {but also} a large class of non-log concave,
multimodal, and heavy-tailed distributions.
 We also point out that the
authors manage to obtain bounds on higher-order derivatives of $f_h$,
under suitable assumptions. {Related results can be found in} 
\cite{FSW18}.

%
%

For the second set of results, if $X \sim p$, recall 
{that $C_p$ is a
  Poincar\'e constant associated to $p$ if for every differentiable
  function $\varphi \in L^2(p)$ such that $\E \varphi(X) = 0$,
$
\E \big[\varphi^2(X)\big] \leq C_p \E \big[|\nabla \varphi(X)|^2\big]
$.}
{Not all densities $p$ admit a finite $C_p$ and probability
distributions which possess a Poincar\'{e} constant are also referred
to as having a spectral gap.}
 Let $ \mathcal C_{c,0}^{\infty}(\R^d) = \{ f \in \mathcal
 C_c^{\infty}(\R^d) \; : \; \int f\, p = 0 \} \subset L^2(p)$. Define
 on $\mathcal C_{c,0}^{\infty}(\R^d)^2$ the Dirichlet form $\mathcal E
 (f,g) = \int  \langle \nabla f, \nabla g\rangle \, p$ and the scalar
 product $\mathcal E_1 (f,g) = \int (f \, g + \langle \nabla f, \nabla
 g\rangle)  p  $. Assume that $\mathcal E$ is {\it closable} (i.e.\
 for all $u_n \in \mathcal 
 C_{c,0}^{\infty}(\R^d); \, n\geq 0$,  if $u_n \rightarrow 0$ in
 $L^2(p)$ and $(u_n)$ is $\mathcal E$-Cauchy, then $\mathcal E
 (u_n,u_n) \rightarrow 0$). This condition ensures that there exists a
 set of functions $W_{1,2}^0(p) \supset \mathcal
 C_{c,0}^{\infty}(\R^d)$, which admit a gradient, and such that
 $W_{1,2}^0(p)$ is a Hilbert space for the scalar product $\mathcal
 E_1$. In particular, by passing to the limit, $\int \phi \, p=0$ for
 all $\phi \in W_{1,2}^0(p)$, and the Poincar\'e inequality holds for
 such $\phi$ if it holds for any differentiable $f \in L^2(p)$ such
 that $\int f\, p = 0$. For background and more material on closable
 Dirichlet forms as well as sufficient conditions on $p$ for a
 Dirichlet form to be closable, we refer to \cite{ma}. 
\begin{prop}
\label{prop:l2bound}
Let $h$ be a  $1$-Lipschitz function. Let $X$ be a random
vector with density $p$ with Poincar\'e constant $C_p$ and satisfying the closability property. Let $W_{1,2}^0(p)$ be defined as above. Then there exists a weak solution $u \in W_{1,2}^0(p)$ to 
\eqref{eq:eqstein2} 
such that
$$\sqrt{\int |\nabla u|^2 p} \leq C_p.$$
\end{prop}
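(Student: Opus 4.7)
The plan is to recast the Stein equation \eqref{eq:eqstein2} as a variational problem on $W_{1,2}^0(p)$ and then invoke Riesz representation. First I would rewrite the left-hand side of \eqref{eq:eqstein2} using $\langle \nabla \log p, \nabla u\rangle \, p = \langle \nabla p, \nabla u\rangle$, so that $(\Delta u + \langle \nabla \log p, \nabla u\rangle)p = \mathrm{div}(p\nabla u)$. Multiplying \eqref{eq:eqstein2} by $\varphi p$ with $\varphi \in \mathcal C_{c,0}^\infty(\R^d)$, integrating over $\R^d$, and using the divergence theorem to move a derivative onto $\varphi$, I obtain the weak formulation
\begin{equation*}
\mathcal E(u,\varphi) \;=\; -\int (h - \E_p h)\,\varphi\, p \qquad \text{for all } \varphi \in W_{1,2}^0(p),
\end{equation*}
so it suffices to exhibit $u \in W_{1,2}^0(p)$ satisfying this identity.

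Next I would verify that $\mathcal E$ itself is a scalar product on $W_{1,2}^0(p)$, making this space into a Hilbert space. Indeed, by the Poincar\'e inequality, for every $\varphi \in W_{1,2}^0(p)$,
\begin{equation*}
\int \varphi^2\, p \;\leq\; C_p \int |\nabla \varphi|^2\, p \;=\; C_p\,\mathcal E(\varphi,\varphi),
\end{equation*}
which shows that $\mathcal E(\cdot,\cdot)$ is equivalent to $\mathcal E_1(\cdot,\cdot)$ on $W_{1,2}^0(p)$, hence induces a complete norm. The closability assumption is what makes $W_{1,2}^0(p)$ well-defined and ensures the Poincar\'e inequality persists on the whole space by passage to the limit.

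Then I would check that the linear functional $\ell(\varphi) := -\int (h - \E_p h)\varphi\, p$ is continuous on $(W_{1,2}^0(p), \mathcal E)$. By Cauchy--Schwarz and the Poincar\'e inequality applied to both $h - \E_p h$ (using that $h$ is $1$-Lipschitz, so $|\nabla h| \leq 1$) and to $\varphi$,
\begin{equation*}
|\ell(\varphi)| \;\leq\; \Bigl(\int (h - \E_p h)^2 p\Bigr)^{1/2}\Bigl(\int \varphi^2 p\Bigr)^{1/2} \;\leq\; \sqrt{C_p} \cdot \sqrt{C_p\,\mathcal E(\varphi,\varphi)} \;=\; C_p\,\sqrt{\mathcal E(\varphi,\varphi)}.
\end{equation*}
By Riesz's representation theorem applied in the Hilbert space $(W_{1,2}^0(p), \mathcal E)$, there exists a unique $u \in W_{1,2}^0(p)$ with $\mathcal E(u,\varphi) = \ell(\varphi)$ for all $\varphi$, and $\sqrt{\mathcal E(u,u)}$ equals the operator norm of $\ell$, hence is at most $C_p$. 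Testing the weak equation against $\varphi = u$ then yields $\mathcal E(u,u) = \ell(u) \leq C_p\sqrt{\mathcal E(u,u)}$, i.e.\ $\sqrt{\int |\nabla u|^2 p} \leq C_p$ as claimed.

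The main subtlety is not the Riesz step itself but the preliminary work already built into the hypotheses: ensuring via the closability of $\mathcal E$ that $W_{1,2}^0(p)$ is a genuine Hilbert space of mean-zero functions on which Poincar\'e still applies, so that $\mathcal E$ is coercive without needing to quotient. One should also verify that the $u$ produced is indeed a weak solution in the sense of Definition \ref{def:steeq}; this is immediate from the derivation of the variational identity, since $\mathcal C_{c,0}^\infty(\R^d)$ is dense in $W_{1,2}^0(p)$ by construction.
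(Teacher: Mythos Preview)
Your proposal is correct and matches the approach the paper intends: the paper does not write out the proof but defers to \cite{courtade2017existence}, whose argument is precisely the Riesz/Lax--Milgram scheme you describe --- pass to the weak formulation $\mathcal E(u,\varphi)=-\int(h-\E_p h)\varphi\,p$, use the Poincar\'e inequality to make $\mathcal E$ coercive on $W_{1,2}^0(p)$, bound the linear functional via Poincar\'e applied to both $h-\E_p h$ and $\varphi$, and read off the $L^2$ gradient bound from the operator norm. Your handling of the closability hypothesis and of the Lipschitz condition on $h$ is exactly what is needed.
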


The proof of the result follows exactly the lines of the proof in
\cite{courtade2017existence} and is hence omitted.  We point out that
here $p$ does not need to satisfy Assumption~A.
{
\begin{rem}
For strongly $k$-log concave $p$, \eqref{prop:mackey} gives that there
exists a (strong) solution $u$ such that $|\nabla u (x)| \leq 1/k,$
for all $x$ in the domain. 
Now it is known that when $X$ has $k$-log-concave density, then the
law of $X$ possesses a Poincar\'{e} constant $C_p = 1/k$ see
\cite{BrLi76} (it is not necessary to assume that the density is
$k$-strongly log-concave).
Hence we can also apply  Proposition \ref{prop:l2bound}, which gives  gives
$\sqrt{\int |\nabla u |^2 p} \leq 1/k.$ Thus, the constants in
\eqref{prop:mackey} and in Proposition \eqref{prop:l2bound} are the
same; the bound in Proposition \eqref{prop:l2bound} is weaker only
because the norm is weaker.

\end{rem}

\begin{rem}  
In \cite{courtade2017existence} it is shown that for any mean zero
multivariate distribution $p$ which is absolutely continuous with
respect to the Lebesgue measure with finite second moment, and which
satisfies a Poincar\'e inequality with constant $C_p$, there exists a
unique function $g \in W_{p}^{1,2}$
 such that
$\pmb \tau_p = \nabla g $ is a Stein kernel for $p$. Moreover,
\begin{equation}\label{eq:cofapa}
  \int \| \pmb \tau_p\|_{HS}^2 \,p  \le C_p \int | x|^2 \,p. 
\end{equation}
In the one-dimensional case, \cite{ley2015distances} (inspired by a
similar result from \cite{Do14}) show that there exists a solution $u$
such that \eq
\label{eq:boundlrs}
|u'(x)| \leq \tau_p(x), \qe $\tau_p$ being the univariate Stein kernel
associated to $X$. Under log concavity of the density, in the
univariate case a stronger bound is available in
 \cite{saumard2018weighted}: 
 Assume $X$ is centered and has a smooth, non-vanishing $k$-log
 concave density $p$ on $\R$. Then 
$\tau_p(x)\leq 1/k,$
for all $x\in\R$.
Thus, in the univariate case where $p$ is $k$-log concave, the bound
\eqref{eq:boundlrs} implies \eqref{prop:mackey}, which in turn implies
the $L^2$ bound from Proposition \ref{prop:l2bound}.
\end{rem}

}
\section{{Wasserstein distance between nested distributions}}
\label{sec:applications}

{In this section we illustrate Stein's method in the multivariate setting in three examples which are based on a novel bound on the Wasserstein distance between nested distributions.}
{The} (1-)Wasserstein distance between two
distributions $F$ and $G$ is
$$ d_{\mathcal{W}}(F, G)
= \sup_{h \in {\mathcal{W}}} \left| F(h)- G(h) \right| $$ with
$\mathcal{W} = \mathrm{Lip}(1)$ the collection of Lipschitz functions
$h : \R^d \to \R$ with Lipschitz constant smaller than 1. Here
$F(h) = \int h \,dF$ is the expectation of $h$ under $F$. Abusing
notation, we also write
$ d_{\mathcal{W}} (X, Y) = d_{\mathcal{W}}(F, G)$ when $X\sim F $ and
$Y \sim G$.


{Let} $P_1$ and $P_2$ {be two probability measures} on $\R^d$, with respective pdfs $p_1$
and $p_2 = \pi_0 p_1$, so that the two densities are nested. Assume
$p_1$ is $k$-log concave. Consider as in \eqref{eq:A} a Stein operator
for $P_i, i=1, 2$ defined by
$$ \A_i u  = \langle \nabla \log p_i, \nabla u\rangle   +  \Delta u. $$
Then 
$\A_2 u  = \A_1u+ \langle \nabla \log \pi_0, \nabla u \rangle.$
This relationship between generators makes it straightforward to bound
the Wasserstein distance between the distributions, as follows.   

\begin{prop}
Assume $p_2 = \pi_0 p_1$ and 
that $\E[  |\nabla  \pi_0 (X_1)| ]< \infty$. 
\begin{enumerate}
\item Assume that $p_1$ is   $k$-{\color{red}{strongly}} log concave. Then with $X_1 \sim p_1$ and $X_2 \sim p_2$,
\begin{equation}  \label{prop:nesteddensities} d_{\mathcal{W}}(X_1,X_2) \leq \frac{1}{k} \E[  |\nabla  \pi_0 (X_1)| ]. \end{equation} 
\item 
Assume  that $p_1$ admits a Poincar\'e constant $C_p$ and $\pi_0 \in W_{1,2}(p_1) $. Then with $X_1 \sim p_1$ and $X_2 \sim p_2$,
\eq
\label{eq:boundnested}
d_{\mathcal{W}}(X_1,X_2) \leq  C_p \sqrt{\E[ | \nabla \pi_0(X_1)|^2]}.
\qe
\end{enumerate} 
\end{prop}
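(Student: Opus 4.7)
The plan is to solve the Stein equation for $P_1$ and then exploit the fact that the two generators differ by a first-order term built from $\pi_0$. Fix a $1$-Lipschitz test function $h$ and let $u = u_h$ be the solution (weak or strong) to the Stein equation $\mathcal{A}_1 u = h - \E_{p_1} h$ provided by Proposition \ref{prop:l2bound} (in case (2)) or by \eqref{prop:mackey} (in case (1)). Then
\begin{equation*}
\E h(X_2) - \E h(X_1) = \E\bigl[\mathcal{A}_1 u(X_2)\bigr] + \bigl(\E_{p_1}h - \E h(X_1)\bigr) = \E\bigl[\mathcal{A}_1 u(X_2)\bigr],
\end{equation*}
since $\E h(X_1) = \E_{p_1} h$.

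Next I would use the identity $\mathcal{A}_2 u = \mathcal{A}_1 u + \langle \nabla \log \pi_0, \nabla u\rangle$ stated just before the proposition. Since $u$ lies in the appropriate Stein class for $p_2$, one has $\E[\mathcal{A}_2 u(X_2)] = 0$, so
\begin{equation*}
\E\bigl[\mathcal{A}_1 u(X_2)\bigr] = -\E\bigl[\langle \nabla \log \pi_0(X_2), \nabla u(X_2)\rangle\bigr].
\end{equation*}
Now rewrite the right-hand side in terms of $p_1$: because $p_2 = \pi_0 p_1$ and $\nabla \log \pi_0 = \nabla \pi_0 / \pi_0$, the change of measure gives
\begin{equation*}
\E\bigl[\langle \nabla \log \pi_0(X_2), \nabla u(X_2)\rangle\bigr] = \int \langle \nabla \pi_0, \nabla u\rangle\, p_1 = \E\bigl[\langle \nabla \pi_0(X_1), \nabla u(X_1)\rangle\bigr].
\end{equation*}

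The bounds then follow by controlling $\nabla u$. For part (1), apply \eqref{prop:mackey} to obtain $|\nabla u(x)| \le 1/k$ pointwise, so Cauchy--Schwarz in $\R^d$ yields
\begin{equation*}
\bigl|\E\bigl[\langle \nabla \pi_0(X_1), \nabla u(X_1)\rangle\bigr]\bigr| \le \tfrac{1}{k}\, \E\bigl[|\nabla \pi_0(X_1)|\bigr].
\end{equation*}
For part (2), apply Proposition \ref{prop:l2bound} to get $\sqrt{\int |\nabla u|^2 p_1} \le C_p$, and then Cauchy--Schwarz in $L^2(p_1)$ gives
\begin{equation*}
\bigl|\E\bigl[\langle \nabla \pi_0(X_1), \nabla u(X_1)\rangle\bigr]\bigr| \le C_p\, \sqrt{\E\bigl[|\nabla \pi_0(X_1)|^2\bigr]}.
\end{equation*}
Taking the supremum over $1$-Lipschitz $h$ concludes the proof.

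The main technical obstacle is the regularity/integrability needed to legitimately conclude $\E[\mathcal{A}_2 u(X_2)] = 0$ from the Stein identity for $p_2$, given that in case (2) the solution $u$ is only a $W^{1,2}_0(p_1)$ weak solution. One must verify that the change of variables $p_2 = \pi_0 p_1$ together with $\pi_0 \in W_{1,2}(p_1)$ and the assumption $\E[|\nabla \pi_0(X_1)|^2]<\infty$ (or just $\E[|\nabla \pi_0(X_1)|]<\infty$ in part (1)) places $u$ in the appropriate Stein class for $p_2$, so that the integration-by-parts underlying $\E[\mathcal{A}_2 u(X_2)]=0$ is valid; the computation above may alternatively be read as a direct $L^2(p_1)$ estimate without invoking $\mathcal{A}_2$, which is the safer route.
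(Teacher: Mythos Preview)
Your proposal is correct and matches the paper's approach. For Part~(1) it is identical (the paper bounds $\E[|\nabla\log\pi_0(X_2)|]$ and leaves implicit the change of measure $\E[|\nabla\log\pi_0(X_2)|]=\E[|\nabla\pi_0(X_1)|]$, which you make explicit). For Part~(2), the paper takes precisely the ``safer route'' you flag at the end: rather than invoking $\E[\mathcal{A}_2 u(X_2)]=0$ for the weak solution, it uses the variational identity $\int\langle\nabla u_h,\nabla v\rangle\,p_1 = -\int \bar h\, v\, p_1$ (which holds for $v\in W_{1,2}(p_1)$ by the definition of weak solution) with $v=\pi_0$, then applies Cauchy--Schwarz in $L^2(p_1)$ exactly as you do.
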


\begin{proof} 
Let $h \; :\; \R^d \mapsto \R$ be a $1$-Lipschitz function.
{To show \eqref{prop:nesteddensities}, by   \eqref{prop:mackey}, there exists a solution $u_h $} 
to $\A_1 u_h = h - \int h p_1$ such that $\| \nabla u_h \|_\infty \leq 1/k$. Let $X_1$ ($X_2$) have distribution $P_1$ ($P_2$). Then
\begin{align*}
\E[h(X_2)]-\E[h(X_1)]& = \E[\A_1 u_h(X_2)]\\
&= \E\left[\A_2 u_h(X_2) - \langle  \nabla \log \pi_0(X_2), \nabla u_h(X_2) \rangle  \right]\\
&=- \E\left[  \langle \nabla \log \pi_0(X_2), \nabla u_h(X_2)  \rangle \right].
\end{align*}
By \eqref{prop:mackey} we have $ | \langle \nabla \log \pi_0(X_2), \nabla u_h(X_2) \rangle|  \leq  |\nabla \log \pi_0(X_2)| /k$, and thus
\begin{equation}
\label{eq:boundw}
| \E[h(X_2)]-\E[h(X_1)]| \leq \frac{1}{k} \E[  |\nabla \log \pi_0 (X_2)| ],
\end{equation}
and the same bound holds for the Wasserstein distance. 

To show \eqref{eq:boundnested}, {by  Proposition \ref{prop:l2bound}, there exists a weak solution $u_h  \in W_{1,2}^0(p)$ to 
\eqref{eq:eqstein2} 
such that
$\sqrt{\int |\nabla u|^2 p} \leq C_p.$} 
{It is straightforward to see that 
\begin{equation} \label{lem:ausefulid}
  \EE \left[ \left\langle \nabla u_h(X),  \nabla v(X) \right\rangle \right]
  = \EE \left[ \bar h(X) v(X) \right]
\end{equation}
for any $v \in W_{1, 2}(p_i)$. 
Indeed,} by definition, $  \mathcal{A} f  = \mathcal{T}_{\nabla,
  p}(\nabla f)$  so that,  by \eqref{eq:8},
\begin{equation*}
\EE \left[ \mathcal{A} f(X) g(X) \right]  = \EE \left[ \mathcal{T}_{\nabla,
  p}(\nabla f(X)) g(X) \right]  = - \EE \left[ \left\langle \nabla
  f(X),  \nabla g(X) \right\rangle  \right];
\end{equation*}
and \eqref{lem:ausefulid}follows. 
  Let $\bar h (x) = h(x) - \E[h(X_1)]$. 
  By 
  \ref{lem:ausefulid}, we have for any $v \in W_{1,2}(p)$,
$$\int \langle \nabla u_h, \nabla v \rangle \; p_1 = -\int \bar{h}
\,v\, p_1.$$ Applying this equation to $v = -\pi_0 \in W_{1,2}(p_1)$,
$$\E[ h(X_2) - h(X_1) ] = \int \langle \nabla u_h,  \nabla \pi_0 \rangle  p_1 \leq \left( \int |\nabla u_h|^2 p_1 \int |\nabla \pi_0|^2 p_1 \right)^{1/2} \leq C_p \sqrt{\E[ | \nabla \pi_0(X_1)|^2]}.$$
\end{proof}


\begin{rem}
\label{rem:boundpi}
In dimension 1, in \cite{ley2015distances}, Equation (4.2), {gives} 
\begin{equation}
\label{eq:bounddim1}
d_{\mathcal{W}}(X_1,X_2) \leq \E[\tau_1(X_1) | \pi_0'(X_1) |],
\end{equation}
where $\tau_1$ is the Stein kernel 
associated to $X_1$. If $p_1$ admits a Poincar\'e constant $C_p$, then from  Proposition \ref{prop:l2bound} (applied to $h(x) = -x$), we have that $\int (\tau(x))^2 p_1 \leq C_p^2$. Thus by the Cauchy-Schwarz inequality, \eqref{eq:bounddim1} is a stronger bound than \eqref{eq:boundnested}.
\end{rem}

\subsection{Example 1: Copulas}

Let $(V_1,V_2)$ be a 2-dimensional random vector, such that the marginals $V_1$ and $V_2$ have a uniform distribution on $[0,1]$. We want to bound the Wasserstein distance between $(V_1,V_2)$ and its independent version $(U_1,U_2)$ ($U_1$ and $U_2$ are uniform and independent), in terms of the copula of $(V_1,V_2)$ defined as
$$C(x_1,x_2) = \P[V_1\leq x_1, V_2 \leq x_2], \quad (x_1,x_2)\in [0,1]^2.$$
(Note that the copula for $(U_1,U_2)$ is $(x_1,x_2) \mapsto x_1x_2$.) Assume that $V_1,V_2$ has a {pdf} 
$c = \partial^2_{x_1x_2} C$.
An optimal Poincar\'e constant for the uniform distribution on $[0,1]^2$  is $C_p = 2/\pi^2$, see \cite{payne1960optimal}. Then, a simple application of  \eqref{prop:nesteddensities} yields
\begin{cor} \label{prop:copula} 
Let $(V_1,V_2)$ have uniform marginals on $[0,1]$, and  {pdf} 
 $c$. Let $(U_1,U_2)$ also have uniform marginals, and let $U_1$ be independent of $U_2$. Then
$$d_{\mathcal{W}}[ (V_1,V_2) \; , \; (U_1,U_2)] \leq  \frac{2}{\pi^2} \sqrt{ \int_{[0,1]^2}  |\nabla c(x_1,x_2)|^2 dx_1\, dx_2}.$$
\end{cor}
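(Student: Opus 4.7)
The plan is to apply the second part of the preceding proposition -- specifically the Poincar\'e-based bound \eqref{eq:boundnested} -- taking $p_1$ to be the density of the uniform law on $[0,1]^2$ (equal to $1$ on the square) and $p_2 = c$. The relative density is then $\pi_0 = p_2/p_1 = c$ on the square, and $\nabla \pi_0 = \nabla c$, so the right-hand side of \eqref{eq:boundnested} already has the shape appearing in the statement.

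First, I would record the two ingredients needed to invoke \eqref{eq:boundnested}. On one hand, by the Payne--Weinberger inequality cited in \cite{payne1960optimal}, the uniform distribution on the convex domain $[0,1]^2$ admits a Poincar\'e constant $C_p = 2/\pi^2$, and the Dirichlet form with Neumann boundary data is closable on this domain by standard arguments, so the solution-existence framework preceding Proposition \ref{prop:l2bound} is available. On the other hand, the smoothness hypothesis $\pi_0 \in W_{1,2}(p_1)$ reduces here to $c$ being weakly differentiable with $\int_{[0,1]^2}|\nabla c|^2 < \infty$; if this integral is infinite the stated bound is vacuous, so the interesting regime is already covered.

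With these two points in hand, the rest is substitution. Plugging into \eqref{eq:boundnested} gives
$$
d_{\mathcal{W}}\!\big((V_1,V_2),(U_1,U_2)\big) \;\leq\; C_p \sqrt{\E\!\left[|\nabla \pi_0(U_1,U_2)|^2\right]} \;=\; \frac{2}{\pi^2}\sqrt{\int_{[0,1]^2}|\nabla c(x_1,x_2)|^2\,dx_1\,dx_2},
$$
where the equality uses that $(U_1,U_2)$ has density identically $1$ on the unit square.

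The only real subtlety -- since the computation itself is a direct substitution -- is technical: the uniform density on the square does not strictly satisfy Assumption A on all of $\R^2$ (it fails to be positive and $\mathcal{C}^1$ outside the square), so one must reinterpret Proposition \ref{prop:l2bound} via the Dirichlet form associated to the Neumann Laplacian on $[0,1]^2$ and verify that a solution $u_h$ to the score-based Stein equation with the $L^2$-gradient bound $C_p$ still exists in this boundary setting. Once this point is granted -- which is standard for convex bounded domains with Neumann data -- the corollary follows immediately.
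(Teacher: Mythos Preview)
Your proof is correct and follows the same approach as the paper: apply the Poincar\'e-based bound \eqref{eq:boundnested} with $p_1$ the uniform density on $[0,1]^2$, $\pi_0 = c$, and the Payne--Weinberger constant $C_p = 2/\pi^2$. The paper's one-line justification cites \eqref{prop:nesteddensities}, but this appears to be a typo---the uniform law is not strongly log-concave, and the resulting bound (with $C_p$ and the $L^2$ norm of $\nabla c$) is precisely the form of \eqref{eq:boundnested}, which is what you use.
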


In some cases, one can compute the gradient of $c$ in a closed form.

\begin{example}
The Ali-Mikhail-Haq copula 
\cite{ali1978class}
{has pdf 
$$ c(x_1, x_2) = \frac{(1-\theta) \{ 1  - \theta(1-x_1)(1 - x_2)\} + 2 \theta x_1 x_2}{\{  1  - \theta(1-x_1)(1 - x_2)\}^3}.$$}
Here $\theta \in [-1,1]$ is a measure of association between the two components $V_1$ and $V_2$ of the vector $(V_1,V_2)$ with uniform marginals each. If $\theta=0$ then the uniform copula $(x_1,x_2) \mapsto x_1x_2$ is recovered. Using 
{Corollary}
\ref{prop:copula} we can assess the Wasserstein distance between the  Ali-Mikhail-Haq copula and the uniform copula in terms of $\theta$. 
for $-1 < \theta < 1$ 
\begin{eqnarray*}
{\int_{[0,1]^2}  |\nabla c(x_1,x_2)|^2 dx_1\, dx_2} 
&\le& 128 \theta^2\frac{1}{\{  1  - |\theta| \}^8}. 
\end{eqnarray*}
Hence 
$$d_{\mathcal{W}} ([(V_1, V_2)\, , \, (U_1, U_2) ]) \le 2.3\, |
\theta|  \{  1  - |\theta| \}^{-4}.$$ 
{This bound shows the expected behaviour -- it} 
tends to 0 for $\theta \rightarrow 0$, whereas it diverges for $| \theta| \rightarrow 1$.
\end{example}

\subsection{Example 2:  Normal model with normal prior}
Consider a normal  ${\mathcal{N}}(\theta , \Sigma_2)$ model with mean $\theta \in \R^d$ and positive
definite covariance matrix $\Sigma$. The likelihood of a sample
$(x_1,\ldots,x_n)$ (where $x_i \in \R^d$ for all $i$) is given by
$\mathcal L(x_1,\ldots,x_n | \theta) = \prod_i f(x_i|\theta)$ where
$$f(x|\theta) = (2\pi)^{-d/2} \det(\Sigma)^{-1/2} \exp\left( -\frac{1}{2}(x-\theta)^T \Sigma^{-1}(x-\theta) \right).$$

To compare the posterior distribution $P_1$ of $\theta$ with
uniform prior with the posterior $P_2$ with normal prior with
parameters $(\mu, \Sigma_2)$
{we employ} the operator
norm $\vvvert A\vvvert = \sup_{\|x\|=1} \|Ax\|.$

\begin{cor}\label{normalpost} Let $P_1$ denote the posterior distribution 
of $\theta$  with uniform prior and $P_2$ the posterior  of $\theta$  with 
{prior ${\mathcal{N}}(\mu, \Sigma_2)$;}
$\Sigma_2$ is assumed positive definite. Then 
  \begin{align*}
    d_{\mathcal{W}} (P_1,P_2) & \leq\vvvert\Sigma\vvvert
                                \;\vvvert(\Sigma+n\Sigma_2)^{-1}\vvvert \;
                                \|\bar x - \mu\|  \\
& \qquad + \frac{\sqrt 2\Gamma(d/2+1/2)}{\Gamma(d/2)} \frac{\vvvert\Sigma\vvvert}{n} \vvvert(\Sigma_2+n\Sigma_2\Sigma^{-1}\Sigma_2)^{-1/2}\vvvert .
  \end{align*}
\end{cor}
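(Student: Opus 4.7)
The plan is to identify $P_1$ and $P_2$ explicitly as Gaussians and then invoke the nested-density Wasserstein bound~\eqref{prop:nesteddensities}. Completing the square in $\theta$ shows $P_1=\mathcal{N}(\bar x,\Sigma/n)$ and $P_2=\mathcal{N}(\mu_{\mathrm{post}},\Sigma_{\mathrm{post}})$ with $\Sigma_{\mathrm{post}}=(n\Sigma^{-1}+\Sigma_2^{-1})^{-1}$ and $\mu_{\mathrm{post}}=\Sigma_{\mathrm{post}}(n\Sigma^{-1}\bar x+\Sigma_2^{-1}\mu)$. Differencing log-densities, the Radon--Nikodym factor $\pi_0=p_2/p_1$ satisfies $\nabla\log\pi_0(\theta)=-\Sigma_2^{-1}(\theta-\mu)$, and $p_1$ is $k$-strongly log-concave with $k=n/\vvvert\Sigma\vvvert$.

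Applying~\eqref{prop:nesteddensities} and using the one-line change of measure $\int|\nabla\pi_0|\,p_1=\int\pi_0|\nabla\log\pi_0|\,p_1=\int|\nabla\log\pi_0|\,p_2$ gives
\[
d_{\mathcal{W}}(P_1,P_2)\;\leq\;\frac{\vvvert\Sigma\vvvert}{n}\,\E_{p_2}\!\big[\,|\Sigma_2^{-1}(X_2-\mu)|\,\big].
\]
Then split $\Sigma_2^{-1}(X_2-\mu)=\Sigma_2^{-1}(X_2-\mu_{\mathrm{post}})+\Sigma_2^{-1}(\mu_{\mathrm{post}}-\mu)$ and apply the triangle inequality; the problem reduces to bounding the deterministic piece $|\Sigma_2^{-1}(\mu_{\mathrm{post}}-\mu)|$ and the random piece $\E_{p_2}[|\Sigma_2^{-1}(X_2-\mu_{\mathrm{post}})|]$ separately.

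For the deterministic piece, the factorization $n\Sigma^{-1}+\Sigma_2^{-1}=\Sigma^{-1}(\Sigma+n\Sigma_2)\Sigma_2^{-1}$ yields the clean form $\Sigma_{\mathrm{post}}=\Sigma_2(\Sigma+n\Sigma_2)^{-1}\Sigma$, from which a direct manipulation gives $\mu_{\mathrm{post}}-\mu=\Sigma_{\mathrm{post}}\,n\Sigma^{-1}(\bar x-\mu)$ and therefore $\Sigma_2^{-1}(\mu_{\mathrm{post}}-\mu)=n(\Sigma+n\Sigma_2)^{-1}(\bar x-\mu)$. Taking operator norm and inserting the prefactor $\vvvert\Sigma\vvvert/n$ delivers the first term of the bound. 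For the random piece, $Z:=\Sigma_2^{-1}(X_2-\mu_{\mathrm{post}})\sim\mathcal{N}(0,C)$ with $C^{-1}=\Sigma_2\Sigma_{\mathrm{post}}^{-1}\Sigma_2=\Sigma_2+n\Sigma_2\Sigma^{-1}\Sigma_2$. Writing $Z=C^{1/2}W$ with $W\sim\mathcal{N}(0,I_d)$, the estimate $|Z|\leq\vvvert C^{1/2}\vvvert\,|W|$, combined with $\vvvert C^{1/2}\vvvert=\vvvert(\Sigma_2+n\Sigma_2\Sigma^{-1}\Sigma_2)^{-1/2}\vvvert$ and the chi-mean formula $\E|W|=\sqrt 2\,\Gamma(d/2+1/2)/\Gamma(d/2)$, produces the second term after multiplication by $\vvvert\Sigma\vvvert/n$.

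The main obstacle is not conceptual but the noncommutative matrix algebra needed to match the stated form: specifically the two identities $\Sigma_2^{-1}(\mu_{\mathrm{post}}-\mu)=n(\Sigma+n\Sigma_2)^{-1}(\bar x-\mu)$ and $\Sigma_2^{-1}\Sigma_{\mathrm{post}}\Sigma_2^{-1}=(\Sigma_2+n\Sigma_2\Sigma^{-1}\Sigma_2)^{-1}$. Once these two simplifications are secured, the operator-norm inequality and the well-known chi-mean expression deliver the two summands directly; the appearance of the prior covariance $\Sigma_2$ ``inside'' the second inverse rather than added to $\Sigma/n$ is precisely the signature of having bounded $\Sigma_2^{-1}(X_2-\mu_{\mathrm{post}})$ rather than $X_2-\mu_{\mathrm{post}}$ itself.
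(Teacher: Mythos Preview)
Your proof is correct and follows essentially the same route as the paper: identify $P_1=\mathcal N(\bar x,\Sigma/n)$ and $P_2=\mathcal N(\tilde\mu,\tilde\Sigma_n)$, apply the strong log-concavity bound with $k=n/\vvvert\Sigma\vvvert$, convert $\E_{p_1}|\nabla\pi_0|$ into $\E_{p_2}|\nabla\log\pi_0|$ (the paper does this implicitly by citing \eqref{eq:boundw}), then split $\Sigma_2^{-1}(X_2-\mu)$ into its mean and fluctuation and use the two matrix simplifications $\Sigma_2^{-1}\tilde\Sigma_n\Sigma^{-1}=(\Sigma+n\Sigma_2)^{-1}$ and $\Sigma_2^{-1}\tilde\Sigma_n\Sigma_2^{-1}=(\Sigma_2+n\Sigma_2\Sigma^{-1}\Sigma_2)^{-1}$ together with the chi mean. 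The decomposition, the key identities, and the final estimates all match the paper's argument.
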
 

\begin{proof} 
{It is a standard calculation that}
$P_1 \sim \mathcal N(\bar x, n^{-1}\Sigma)$, {with $\bar x = \frac{1}{n}\sum_{i=1}^n x_i$ Thus ({see Example 2.10 in \cite{saumardwellner}}) $p_1$  is {strongly} $1/\lambda$-log concave,where $\lambda$ is the greatest eigenvalue of $\Sigma$; $\lambda$ is also the operator norm of $\Sigma$,}.
{and \eqref{prop:nesteddensities} can be applied}. From \eqref{eq:boundw}, we deduce
$$d_{\mathcal{W}} (P_1,P_2) \leq \frac{\vvvert\Sigma\vvvert}{n} \E[  \|\nabla \log \pi_0 (X_2)\| ]$$
where $X_2$ is a r.v. with law $P_2$. It remains to calculate this
last expectation. 
{It is again a standard calculation that} 
$P_2 \sim \mathcal N(\tilde \mu, \tilde \Sigma_n)$
with
\begin{align}
\tilde \mu &= (\Sigma_2^{-1}+(n^{-1}\Sigma)^{-1})^{-1}( (n^{-1}\Sigma)^{-1}\bar x+\Sigma_2^{-1}\mu)= \mu+n \tilde \Sigma_n \Sigma^{-1} (\bar x - \mu) \label{eq:mu}\\
\tilde \Sigma_n &= (\Sigma_2^{-1}+n\Sigma^{-1})^{-1}. \notag
\end{align}
Since $p_2(\theta) \propto p_1(\theta) \exp[(\theta-\mu)^T \Sigma_2^{-1} (\theta-\mu)]$, it holds that
$$\nabla \log \pi_0(\theta) =  -\Sigma_2^{-1} (\theta-\mu).$$
Together with \eqref{eq:mu}, it follows that $\nabla \log\pi_0(X_2)$
has a normal distribution with mean
$-n\Sigma_2^{-1} \tilde \Sigma_n \Sigma^{-1} (\bar x - \mu)$ and
covariance matrix $\Sigma_2^{-1} \tilde \Sigma_n \Sigma_2^{-1}$. Note
that
\begin{align*}
\Sigma_2^{-1}  \tilde \Sigma_n \Sigma^{-1} &=\Sigma_2^{-1} (\Sigma_2^{-1}+n\Sigma^{-1})^{-1} \Sigma^{-1}
= (\Sigma+n\Sigma_2)^{-1}
\end{align*}
and, in the same way,
$
\Sigma_2^{-1} \tilde \Sigma_n \Sigma_2^{-1} = (\Sigma_2+n\Sigma_2\Sigma^{-1}\Sigma_2)^{-1}.
$
Let $N$ stand for a $d$-dimensional standard normal vector. From the above, we deduce that $\nabla\log \pi_0(X_2) \sim -n(\Sigma+n\Sigma_2)^{-1} (\bar x - \mu) +  (\Sigma_2+n\Sigma_2\Sigma^{-1}\Sigma_2)^{-1/2} N$. Thus,
\begin{align*}
\frac{\vvvert\Sigma\vvvert}{n} \E[  \|\nabla \log \pi_0 (X_2)\| ] \leq &\vvvert\Sigma\vvvert \;\vvvert(\Sigma+n\Sigma_2)^{-1}\vvvert \; \|\bar x - \mu\|\\
&  + \frac{\vvvert\Sigma\vvvert}{n} \vvvert(\Sigma_2+n\Sigma_2\Sigma^{-1}\Sigma_2)^{-1/2}\vvvert \E[\|N\|].
\end{align*}
Since $\|N\|^2 \sim \chi^2(d)$, $\|N\|$ follows a chi distribution
with $d$ degrees of freedom and
$\E[\|N\|] =\frac{\sqrt 2\Gamma(d/2+1/2)}{\Gamma(d/2)}$ .
The assertion follows.
\end{proof}

\begin{remark}
  The bound in Corollary \eqref{normalpost} has a contribution of
  order $n^{-1}$ from the different covariance matrices, but also a
  contribution which depends on $ \|\bar x - \mu\|$. By the law of
  large numbers, this term is order $n^{-1}$ in probability, but
  crucially depends on the actual sample which was drawn.
\end{remark}

\begin{remark}
When $d=1$, we retrieve the bound of \cite{ley2015distances}. 
\end{remark}

 As in \cite{ley2015distances}, our Stein framework lends itself naturally for assessing distributional distances between nested densities more generally than in the previous example. 

\begin{prop} \label{sec:nested-densities}
Assume $p_2 = p_1 \pi_0$, where $p_1$ and $p_2$ are densities which satisfy Assumption A. Let $X_1 \sim p_1$ and $X_2 \sim p_2$ denote two random variables with distributions having densities $p_1$ and $p_2$, respectively. Then
\begin{equation} \label{nested} \|\E[ \pmb \tau_1(X_1) \nabla
  \pi_0(X_1) ]\|\leq d_{\mathcal{W}} (X_1,X_2) \leq \sup_{f_h} \, | \E
  [ \langle\nabla\log \pi_0 (X_2), \nabla f_h (X_2) \rangle)] |
\end{equation}
where the supremum is taken over all $f_h$ solving the $p_1$-Stein
equation \eqref{eq:eqstein2} for $h$ a 1-Lipschitz function.
\end{prop}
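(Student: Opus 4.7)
The plan is to prove the two inequalities separately. For the upper bound I would mimic the generator-comparison argument from the proof of the preceding proposition. Given a $1$-Lipschitz test function $h$ with associated solution $f_h$ of the $p_1$-Stein equation $\A_1 f_h = h - \E[h(X_1)]$, and using the identity $\A_2 u = \A_1 u + \langle \nabla \log \pi_0, \nabla u \rangle$ together with $\E[\A_2 f_h(X_2)] = 0$ (valid since $X_2 \sim p_2$), I would deduce
\begin{equation*}
\E[h(X_2)] - \E[h(X_1)] \;=\; \E[\A_1 f_h(X_2)] \;=\; -\E\bigl[\langle \nabla \log \pi_0(X_2), \nabla f_h(X_2) \rangle\bigr].
\end{equation*}
Taking absolute values and then the supremum over $1$-Lipschitz $h$ (and the corresponding $f_h$) yields the stated upper bound on $d_{\mathcal{W}}(X_1, X_2)$.

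For the lower bound, the idea is to apply the Stein kernel identity \eqref{eq:defstek2} with the specific test function $g = \pi_0$. This gives
\begin{equation*}
\E\bigl[\pmb\tau_1(X_1) \nabla \pi_0(X_1)\bigr] \;=\; \E\bigl[(X_1 - \nu_1) \pi_0(X_1)\bigr],
\end{equation*}
where $\nu_1 = \E[X_1]$. Since $\pi_0 p_1 = p_2$, the right-hand side equals $\int (x - \nu_1) p_2(x)\, dx = \E[X_2] - \E[X_1]$. Kantorovich--Rubinstein duality then gives, for any $v \in S^{d-1}$, that the map $x \mapsto \langle v, x \rangle$ is $1$-Lipschitz, so
\begin{equation*}
\bigl| \langle v, \E[X_2] - \E[X_1] \rangle \bigr| \;\le\; d_{\mathcal{W}}(X_1, X_2).
\end{equation*}
Taking the supremum over $v \in S^{d-1}$ gives $\|\E[X_2] - \E[X_1]\| \le d_{\mathcal{W}}(X_1, X_2)$, which combined with the previous identification is exactly the lower bound.

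The main obstacle will be verifying admissibility of the test functions in each step: namely, that $\pi_0 \in \mathrm{dom}(p_1, \pmb\tau_1)$ so that the Stein kernel identity may be invoked at $g = \pi_0$ (this demands appropriate integrability of $\pi_0$, $\nabla \pi_0$ and $\pmb\tau_1 \nabla \pi_0$ with respect to $p_1$), and that for every $1$-Lipschitz $h$ a sufficiently regular solution $f_h$ to $\A_1 f_h = h - \E[h(X_1)]$ exists so that $\E[\A_2 f_h(X_2)] = 0$ is legitimate. Both requirements fall within the framework developed in Section~\ref{sec:bound-stein-discr} and are implicit in the hypotheses of the statement; once they are in place the two computations above are routine.
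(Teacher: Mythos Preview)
Your proposal is correct and follows essentially the same approach as the paper's proof. For the upper bound you use the identity $\A_2 = \A_1 + \langle\nabla\log\pi_0,\nabla\cdot\rangle$ together with $\E[\A_2 f_h(X_2)]=0$, exactly as the paper does; for the lower bound both you and the paper test against linear functionals $x\mapsto\langle e,x\rangle$ and reduce to the Stein kernel identity applied with $g=\pi_0$ (the paper writes this step as $\E[\langle X_1,e\rangle(1-\pi_0(X_1))]=-\langle\E[\pmb\tau_1(X_1)\nabla\pi_0(X_1)],e\rangle$, which unpacks to precisely your computation $\E[\pmb\tau_1(X_1)\nabla\pi_0(X_1)]=\E[X_2]-\E[X_1]$).
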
 

\begin{proof} 
Consider the lower bound first. 
Let $e$ be a unit vector. Since $x \mapsto \langle x,e\rangle$ is 1-Lipschitz, using the nested structure we have
\begin{align*}
d_{\mathcal{W}} (X_1,X_2)& \geq \E [ \langle X_1,e\rangle - \langle X_2,e\rangle]\\
& = \E[ \langle X_1,e \rangle (1-\pi_0(X_1)) ]\\
& = -\langle \E[  \pmb \tau_1(X_1) \nabla \pi_0(X_1)  ] , e \rangle.
\end{align*}
Taking $e = - \E[  \pmb \tau_1(X_1) \nabla (p_2/p_1) (X_1)  ] / \|\E[  \pmb \tau_1(X_1) \nabla \pi_0(X_1)  ] \|$ gives 
$$d_{\mathcal{W}} (X_1,X_2) \geq \|\E[  \pmb \tau_1(X_1) \nabla \pi_0(X_1)  ]\|.$$
For the upper bound,  if $h$ is 1-Lipschitz, using the score Stein operator for $p_1$, 
\begin{align*}
\E[h(X_2) - h(X_1)] & = \E [(\Delta f_h )(X_2) - \langle\nabla\log  p_1 (X_2), \nabla f_h (X_2) \rangle)]\\
&= \E [ \langle\nabla\log  \pi_0 (X_2), \nabla f_h (X_2) \rangle)]. 
\end{align*}
The last equality follows from the nested structure $p_2 = p_1 \theta_0$. 
\end{proof}

\begin{remark}\label{likelihhood ratio}
  The following argument shows that the gradient of the likelihood
  ratio between two densities arises naturally in the Stein
  framework. Suppose that $q$ is another density on $\R^d$ and that
  $K_p$, the support of $p$, is a subset of $K_q$, the support of
  $q$. Then the likelihood ratio $r=p/q$ is well defined over $\R^d$
  (with the convention that $r=0$ outside of $K_q$) and, for every
  $\mathbf{f} \in \mathcal{F}(p) \cap \mathcal{F}(q)$, the product
  rule \eqref{productrule} gives
\begin{align}
  \mathcal{T}_{\dv, p} \mathbf{f}& =
                            \frac{\mathrm{div}(\mathbf{f}\,p)}{p}
 = \frac{\mathrm{div}(\mathbf{f}\,q\, r)}{q} \frac{1}{\ell}
                            \nonumber \\
  &= \frac{\mathrm{div}(\mathbf{f}\,q)}{q}  +
    \big\langle \mathbf{f},  \frac{\nabla r}{r} \big\rangle  =
    \mathcal{T}_{\dv, q} (\mathbf{f}) + \big\langle
    \mathbf{f} , 
    \frac{\nabla r}{r} \big\rangle. 
    \label{eq:2}
\end{align}
In combination with the ideas of Proposition
\ref{sec:nested-densities}, this leads to a general bound on the
difference between densities in terms of the likelihood ratio. 
\end{remark}

\begin{remark} 
\label{sec:general-prior}
Based on Remark \ref{likelihhood ratio}, the comparisons of the
posteriors arising from different priors can be carried out more
generally. Here is a sketch.  In the Bayesian framework, we aim to
compare $\Theta_1$ and $\Theta_2$ obtained through different priors,
where for $\ell_{\theta}(x)$ the likelihood function,
\begin{align*}
  & \Theta_1 \sim \pi_1(\theta) = \kappa_1(x) \ell_{\theta}(x) \\
 &   \Theta_2 \sim \pi_2(\theta) = \kappa_2(x) \pi_0(\theta) \ell_{\theta}(x)
\end{align*}
for $\pi_0$ a nonnegative function with support a subset of $\R^d$,
and $\kappa_i(x)$, $i=1, 2$ the normalizing constants -- here
$x = (x_1, \ldots, x_J) \in \R^J$ is a fixed sample of size $J$ and
$\theta \in \R^d$ is the variable.  Then, in the notations of
Remark~\ref{likelihhood ratio}, we have 
$r(\theta) = \kappa_2(x) / \kappa_1(x) \pi_0(\theta)$ so that,
introducing $\rho_0 = \nabla \pi_0/\pi_0$ -- which does not depend on
the normalizing constants -- we obtain, from \eqref{eq:2}, 
\begin{equation}\label{eq:38}
  \mathcal{T}_{\dv, \pi_2} \mathbf{f} =
  \mathcal{T}_{\dv, \pi_1}\mathbf{f} + 
  {\langle \mathbf{f},  \rho_0 \rangle}
\end{equation}
for any function
$\mathbf{f} \in \R^n \times \R^d \in {\mathcal{F}}(\pi_1) \cap
{\mathcal{F}}(\pi_2)$, where $n \ge 1$ is arbitrary.  In particular,
\begin{equation}
  \label{eq:73}
  \E \left[ \mathcal{T}_{\mathrm{div}, \pi_1} \mathbf{f}(\Theta_2)
  \right] = \E \left[ \left\langle \mathbf{f}(\Theta_2), \rho(\Theta_2) \right\rangle \right]
\end{equation}
for all sufficiently regular vector-valued functions $\mathbf{f}$.
Next suppose that there exists some well chosen matrix valued function
$\mathbf{B}$ for which $\pi_1$ is characterized by a second-order
divergence Stein operator acting on real valued functions
$u : \R^d \to \R$ via
\begin{equation}
  \label{eq:71}
u \mapsto  \mathcal{A}_1u = \mathcal{T}_{\dv, \pi_1} (\mathbf{B}
\nabla u) = \left\langle \mathbf{b},\nabla u \right\rangle +
\left\langle  \mathbf{B},\nabla^2u \right\rangle_{\mathrm{HS}}
\end{equation}
with $\mathbf{b} = \mathcal{T}_{\dv, \pi_1} \mathbf{B}$ (recall
\eqref{eq:axfnuhess2}); then
in light of \eqref{eq:38}, we know that  
\begin{align}\label{eq:14}
  \mathcal{A}_2u =   \mathcal{T}_{\dv, \pi_2} (\mathbf{B}
\nabla u) = \mathcal{A}_1 u +
  \langle \mathbf{B}\nabla u, \rho_0 \rangle
\end{align}
satisfies $\E \big[ \mathcal{A}_2u(\Theta_2)\big]=0$ for all
admissible functions $u$. In particular, if $u_h$ is a solution to the
(second order) Stein equation 
\begin{align*}
  \mathcal{A}_1 u_h(\theta) =  h(\theta) - \E \left[ h(\Theta_1) \right]
\end{align*}
we get, with the help of \eqref{eq:14} and under {{suitable}}
conditions,
\begin{align*}
  \E \left[ h(\Theta_2) \right] -\E \left[ h(\Theta_1) \right] & =\E
                                                                 \left[
                                                                 \mathcal{A}_1
                                                                 u_h(\Theta_2)                                                      
                                                                 \right] \\
                                                               & =\E
                                                                 \left[
                                                                 \mathcal{A}_2u_h(\Theta_2) 
                                                                 \right]
                                                                 -
                                                                 \E
                                                                 \left[                                                                                             \langle
                                                                 \mathbf{B}(\Theta_2)\nabla
                                                                 u_h(\Theta_2),
                                                                 \rho_0
                                                                 (\Theta_2)
                                                                 \rangle                                                                    
                                                                \right] \\
  & = -  \E \left[  \langle \mathbf{B}(\Theta_2) \nabla u_h(\Theta_2),
    \rho_0 (\Theta_2) \rangle     \right]. 
\end{align*}
Investigating this approach in more detail will be part of future
research.
\end{remark} 

\subsection{{Example 3: Comparing Azzalini-Dalla Valle skew-normal distributions to multivariate normal }}

The density of the Azzalini-{Dalla Valle} type r.v. $X$ is given by
$$p_\alpha(x)=2\omega_d(x-\mu;\Sigma) \Phi(\alpha^T x),$$
where $\omega_d(x-\mu:\Sigma)$ is the density of
$\mathcal N(\mu,\Sigma)$, $\Phi$ the c.d.f. of the standard normal on
$\R$, and $\alpha \in \R^d$ is a skew parameter, {see
  \cite{azzalini1996}}. Now we assume for simplicity $\Sigma = I_d$
and $\mu=0$. {In \cite{LRS16}, an exact expression for the
  Wasserstein distance is given for $d=1$; here we extend this result
  to general $d$.}

\begin{prop}
Let $X \in \R^d$ have pdf $p_\alpha(x)=2\omega_d(x; Id ) \Phi(\alpha^T x),$, and let $Z$  be a $d$-dimensional standard normal element. Then 
\begin{equation}\label{wassequality} 
d_{\mathcal W}(X,Z) = \sqrt{\frac{2}{\pi}}\frac{\|\alpha\|}{\sqrt{1+\|\alpha\|^2}}.
\end{equation}
\end{prop}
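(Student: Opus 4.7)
The proof strategy is to bound $d_{\mathcal W}(X,Z)$ from above using the nested-densities machinery of Proposition \ref{prop:nesteddensities}, applied to $p_1=\omega_d(\cdot;I_d)$ and $p_2=p_\alpha=\pi_0\,p_1$ with $\pi_0(x)=2\Phi(\alpha^T x)$, and then to match this bound exactly with a judiciously chosen $1$-Lipschitz test function.

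For the upper bound I would proceed as follows. Since $\omega_d(\cdot;I_d)$ is $1$-strongly log-concave, estimate \eqref{prop:nesteddensities} with $k=1$ gives
\[
d_{\mathcal W}(X,Z)\le \E_{p_1}\bigl[\,|\nabla\pi_0(Z)|\,\bigr].
\]
A direct differentiation yields $\nabla\pi_0(x)=2\alpha\,\phi(\alpha^T x)$, where $\phi$ is the one-dimensional standard normal density, so that $|\nabla\pi_0(x)|=2\|\alpha\|\,\phi(\alpha^T x)$. Since $\alpha^T Z\sim\mathcal N(0,\|\alpha\|^2)$, a one-line Gaussian integral gives
\[
\E\bigl[\phi(\alpha^T Z)\bigr]=\int_\R \phi(\|\alpha\|u)\phi(u)\,du=\frac{1}{\sqrt{2\pi\,(1+\|\alpha\|^2)}},
\]
whence
\[
d_{\mathcal W}(X,Z)\le 2\|\alpha\|\cdot\frac{1}{\sqrt{2\pi\,(1+\|\alpha\|^2)}}=\sqrt{\tfrac{2}{\pi}}\,\frac{\|\alpha\|}{\sqrt{1+\|\alpha\|^2}}.
\]

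For the matching lower bound I would pick the coordinate projection onto the skew direction. The map $h(x)=\langle x,\alpha\rangle/\|\alpha\|$ is $1$-Lipschitz, so
\[
d_{\mathcal W}(X,Z)\ge \E[h(X)]-\E[h(Z)]=\frac{1}{\|\alpha\|}\langle\alpha,\E[X]\rangle.
\]
The mean of the multivariate skew-normal is classical, and can also be recovered directly by Gaussian integration by parts: for each $i$,
\[
\E[X_i]=2\int x_i\,\omega_d(x;I_d)\,\Phi(\alpha^T x)\,dx=2\alpha_i\int\phi(\alpha^T x)\,\omega_d(x;I_d)\,dx=\sqrt{\tfrac{2}{\pi}}\,\frac{\alpha_i}{\sqrt{1+\|\alpha\|^2}},
\]
using the same Gaussian integral as above. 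Hence $\E[X]=\sqrt{2/\pi}\,\alpha/\sqrt{1+\|\alpha\|^2}$ and the lower bound evaluates to $\sqrt{2/\pi}\,\|\alpha\|/\sqrt{1+\|\alpha\|^2}$, matching the upper bound and establishing \eqref{wassequality}.

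The crucial observation that makes this work so cleanly, and which is the only nontrivial point in the argument, is the cancellation in $\E_{p_1}[|\nabla\pi_0|]$: the skewing factor $\Phi(\alpha^T x)$ never appears in the final integral, leaving a pure Gaussian computation whose value coincides (up to the norm of $\alpha$) with the mean of the skew-normal. This shared structure is precisely what forces equality, rather than only an inequality, in the Stein bound; the only hazard is in checking the integrability and domain hypotheses needed to legitimately invoke Proposition \ref{prop:nesteddensities} (finiteness of $\E[|\nabla\pi_0(Z)|]$ is immediate from the explicit formula, and $\omega_d$ clearly satisfies Assumption A).
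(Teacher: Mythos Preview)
Your proof is correct and follows essentially the same route as the paper's. The upper bound is identical (invoke the $1$-strong log-concavity of $\omega_d$ and compute $\E[\phi(\alpha^T Z)]$); for the lower bound you and the paper both take the test function $h(x)=\langle\alpha,x\rangle/\|\alpha\|$, with the only cosmetic difference that you compute $\E[X]$ directly via Stein's identity $\E[Z_i\,g(Z)]=\E[\partial_i g(Z)]$, whereas the paper writes out the explicit Stein-equation solution $u_h=-h$ and the nested-operator relation $\mathcal{A}_2 u=\mathcal{A}_1 u+\langle\nabla\log\pi_0,\nabla u\rangle$ before reducing to the same Gaussian integral.
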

\begin{proof}
{First we show the upper bound on the Wasserstein distance. As the multivariate standard normal distribution is 1-strongly log concave,\eqref{prop:nesteddensities}  can be applied with
$\pi_0 (x) = 2 \Phi (\alpha^T x). $
As $\nabla \pi_0 (x) = 2\alpha \phi (\alpha^T x)$
with $\phi$ the one-dimensional standard normal pdf, it suffices to calculate that}
\begin{align*}
 \E[\phi(\alpha^T Z)] &= (2\pi)^{-(d+1)/2}\int_{\R^d} \exp\left[ - \frac{1}{2}\left( (\alpha^T x)^2 + \|x\|^2 \right) \right]dx\\
 &=(2\pi)^{-(d+1)/2}\; \frac{(2\pi)^{d/2}}{\sqrt{\text{det}(M)}}\\
&=\frac{1}{\sqrt{2\pi(1+\|\alpha\|^2)}}.
\end{align*}
Thus,
\begin{equation}
d_{\mathcal W}(X,Z) \leq  \sqrt{\frac{2}{\pi}}\frac{\|\alpha\|}{\sqrt{1+\|\alpha\|^2}}.
\end{equation}
Now we prove we have actually equality. Consider the $1$-Lipschitz
test function $h(x) = \langle \frac{\alpha}{\|\alpha\|}, x \rangle$;
note that $\E[h(Z)]=0$. Then 
$u_h(x) = -h(x)$
is a solution to the Stein equation
$\Delta u - \langle x, \nabla u(x) \rangle= h(x) $
and $\Delta u=0$, {and} 
\begin{align*}
  \E[h(X)-h(Z)] &= 2  \E [\Delta u_h(X) - \langle X,\nabla u_h(X) \rangle ] \\
  &=  {2\E[ \langle  \nabla u_h(Z), \alpha  \phi (\alpha^T Z) \rangle]}\\
                &= 2\E[\langle \nabla h(Z),  \alpha  \phi (\alpha^T Z) \rangle]\\
                &= 2\E \left[ \frac{1}{\|\alpha\| }  \langle \alpha, \alpha  \phi(\alpha^T Z) \rangle \right]\\
&=\sqrt{\frac{2}{\pi}}\frac{\|\alpha\|}{\sqrt{1+\|\alpha\|^2}},
\end{align*}
so that $d_{\mathcal W}(X,Z) \geq  \sqrt{\frac{2}{\pi}}\frac{\|\alpha\|}{\sqrt{1+\|\alpha\|^2}}$. Thus we obtained \eqref{wassequality}.
\end{proof}

\section{Stein operators for elliptical distributions}
\label{sec:ellipt-distr}

In this section we detail the constructions explicitly for the entire
family of elliptical distributions, {defined as follows, see}  
\cite{landsman2008stein}.
\begin{definition}
An absolutely continuous $d$-random vector has multivariate elliptical
distribution $\E_d(\nu, \Sigma, \phi)$ if its density is of the form
\begin{equation}
  \label{eq:ellipticdistr}
  p(x) = \kappa|\Sigma|^{-1/2} \phi \left( \frac{1}{2}
    (x-\nu)^T\Sigma^{-1}(x-\nu) \right), \, x \in \R^d,
\end{equation}
for $\phi : \R^+ \to \R^+$ a measurable function called \emph{density
  generator}, $\nu \in \R^d$ the location parameter, $\kappa$ the
normalising constant and $\Sigma = (\sigma_{ij})$ a symmetric positive definite
$d \times d$ dispersion matrix.
\end{definition}

   A particular important case is $\E_d(0, I_d, \phi)$ called
   \emph{spherical distribution}. Note that the matrix $\Sigma$ in
   definition \eqref{eq:ellipticdistr} is not necessarily the
   covariance matrix; also not all choices of $\phi$ lead to
   well-defined densities, see \cite{landsman2008stein} or
   \cite{pain14} for a discussion and references.  Prominent members
   of the elliptical family are
\begin{enumerate}
\item Gaussian distribution $\mathcal{N}_d({\nu},  \Sigma)$,  with $\phi(t) = e^{-t}$ and $\kappa =
  (2\pi)^{-d/2}$. 
\item Power exponential distribution 
  $\phi(t) = \mathrm{exp}(- b_{p, \zeta} t^{\zeta})$ with $\zeta>0$,
  $ b_{p, \zeta}$ a scale factor and $\kappa$ defined accordingly, see
  \cite{gomez2008,aerts2017robust} for details.
\item Multivariate Student-$t$ distribution,
with
  $\phi(t) = \left( 1+2t/k \right)^{-(k+d)/2}$ and
  $\kappa = c_{k, d, \Sigma}$.
\item Symmetric generalized hyperbolic distribution with density 
  \begin{equation}
    \label{eq:66}
    p(x) =  \frac{\left( \sqrt{\chi \psi} \right)^{-\lambda}
      \psi^{d/2}}{(2\pi)^{d/2} \left| \Sigma \right| K_{\lambda}
      \left( \sqrt{\chi \psi} \right)} \frac{K_{\lambda - d/2}\left(
        \sqrt{\left( \chi + (x-\nu)^T \Sigma^{-1} (x - \nu)
          \right)\psi} \right)}{\left( \sqrt{\left( \chi + (x-\nu)^T \Sigma^{-1} (x - \nu)
          \right)\psi} \right)^{(d/2)-\lambda}}
  \end{equation}
where $K_{\lambda}$ denotes a modified Bessel function of the third
kind and $\lambda, \chi, \psi$ are real parameters, see
  \cite[Example 3.8]{mcneil2015quantitative} for details. To put in
  the parameterization \eqref{eq:ellipticdistr} we take $\phi(t) =
  K_{\lambda - d/2}(\sqrt{2t})/(\sqrt{2t})^{(d/2)-\lambda}$ and 
  $\kappa = \frac{\left( \sqrt{\chi \psi} \right)^{-\lambda}
      \psi^{d/2}}{(2\pi)^{d/2} \left( \sqrt{\chi \psi} \right)}$.
\end{enumerate}

Throughout this section 
we let
$\Omega_\phi$ be the image of $\Omega_p$ through
$x\mapsto\frac{1}{2} (x-\nu)^T\Sigma^{-1}(x-\nu)$. 
{We make the following assumption.

\medskip 
\noindent \textbf{Assumption B}:
There is a non-empty
  open set $\Omega_\phi$ such that $\phi>0$ and $\phi$ is $\mathcal
  C^1$ on this open set, and such that the Lebesgue measure of
  $K_\phi\backslash \Omega_\phi$ is zero.
 
 \medskip 
Under Assumption B, it is readily checked
  that $p$ satisfies Assumption A with $\Omega_p$ defined by \eq
\label{eq:omegapell}
\begin{array}{lll}
\Omega_p &= &\left\{ x \in \R^d \; : \;  \frac{1}{2}
    (x-\nu)^T\Sigma^{-1}(x-\nu) \in \Omega_\phi\right\}\\
    &=&\left\{ x \in \R^d \; : \;  \phi \left( \frac{1}{2}
    (x-\nu)^T\Sigma^{-1}(x-\nu) \right) >0\right\}.
    \end{array}
\qe}

%

{It is straightforward to calculate that if} $p$ is of the form \eqref{eq:ellipticdistr} {and satisfies Assumption B} then
\begin{equation}
  \label{eq:elliptscore}
  \rho_p(x) = \Sigma^{-1}(x-\nu)
  \frac{\phi'((x-\nu)^T\Sigma^{-1}(x-\nu)/2)}{\phi((x-\nu)^T\Sigma^{-1}(x-\nu)/2)}
\end{equation}
is the score function of $p$ (defined on $\Omega_p$).  Hence the
score-Stein operator \eqref{eq:1} is easily obtained for this family
of distributions.

\subsection{Stein kernels for elliptical distributions} 

The following 
proposition shows that in order to find Stein
kernels for elliptical distributions, it suffices to consider the case
$\Sigma = I_d$ and $\nu=0$.
 \begin{prop}
 \label{prop:sigmaid}
 The application
 $$ \pmb \tau \mapsto (x\mapsto \Sigma^{1/2} \pmb \tau(\Sigma^{-1/2} (x-\nu)) \Sigma^{1/2}),$$
 is a bijection between the set of Stein kernels of $\E_d(0, I_d, \phi)$ and that of $\E_d(\nu, \Sigma, \phi)$.
 \end{prop}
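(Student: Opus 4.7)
My plan is to exploit the fact that if $X \sim \E_d(\nu, \Sigma, \phi)$ then $Y := \Sigma^{-1/2}(X-\nu) \sim \E_d(0, I_d, \phi)$. At the level of densities, if $q$ denotes the density of $\E_d(0, I_d, \phi)$ and $p$ that of $\E_d(\nu, \Sigma, \phi)$, this gives the simple relation $p(x) = |\Sigma|^{-1/2} q(\Sigma^{-1/2}(x-\nu))$; moreover the change of variable sends $\Omega_p$ bijectively onto $\Omega_q$, so Assumption~B transfers and the regularity requirements in Assumption~A are preserved. The overall strategy is then to transport Stein kernels along this affine change of variable, using the Stein identity characterization \eqref{eq:steinkernelidentity} for test functions in $\mathcal{C}_c^\infty(\Omega_p)$.

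Concretely, I would first verify the forward direction: given a Stein kernel $\pmb\tau$ of $q$, define $\pmb\sigma(x) = \Sigma^{1/2} \pmb\tau(\Sigma^{-1/2}(x-\nu)) \Sigma^{1/2}$ and show $\pmb\sigma$ is a Stein kernel of $p$. Fixing $g \in \mathcal{C}_c^\infty(\Omega_p)$ and letting $h(y) = g(\Sigma^{1/2}y + \nu) \in \mathcal{C}_c^\infty(\Omega_q)$, the chain rule and symmetry of $\Sigma^{1/2}$ give $\nabla g(x) = \Sigma^{-1/2} \nabla h(y)$. Performing the change of variable $y = \Sigma^{-1/2}(x-\nu)$ in $\E_p[\pmb\sigma(X) \nabla g(X)]$, the Jacobian cancels the factor $|\Sigma|^{-1/2}$ in $p$, one $\Sigma^{1/2}$ cancels $\Sigma^{-1/2}$ appearing in $\nabla g$, and we obtain
\begin{equation*}
\E_p[\pmb\sigma(X) \nabla g(X)] = \Sigma^{1/2} \E_q[\pmb\tau(Y) \nabla h(Y)] = \Sigma^{1/2} \E_q[Y\, h(Y)] = \E_p[(X-\nu) g(X)],
\end{equation*}
where the middle equality uses that $\pmb\tau$ is a Stein kernel of $q$, and the last is the inverse change of variable. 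By the equivalence noted after \eqref{eq:steinkernelidentity}, this is enough to conclude $\pmb\sigma$ is a Stein kernel of $p$.

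For the bijection, I would exhibit the obvious inverse $\pmb\sigma \mapsto \pmb\tau$, with $\pmb\tau(y) = \Sigma^{-1/2} \pmb\sigma(\Sigma^{1/2}y + \nu) \Sigma^{-1/2}$, and observe by the exact same computation in reverse that it sends Stein kernels of $p$ to Stein kernels of $q$. Composing the two maps in either order yields the identity, so the correspondence is a bijection.

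The main technical step to be careful with is verifying that membership in $\mathcal{F}(p)$ transfers to membership in $\mathcal{F}(q)$ under the affine change of variable --- this boils down to checking that $C^1$-regularity on $\Omega_q$ and the integrability of $\partial_e(q\,\pmb\tau)$ are preserved, which follows from smoothness of the affine map and the Jacobian computation, but needs to be stated cleanly. Everything else reduces to routine linear algebra once the change of variable is set up, so the only genuine content is the transformation rule for $\nabla$ under $x \mapsto \Sigma^{1/2} y + \nu$ and the symmetry $\Sigma^{1/2} = (\Sigma^{1/2})^T$.
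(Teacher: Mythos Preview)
Your proposal is correct and follows essentially the same route as the paper: both verify the Stein identity \eqref{eq:steinkernelidentity} by composing test functions with the affine map $y\mapsto \Sigma^{1/2}y+\nu$, using the chain rule $\nabla g(x)=\Sigma^{-1/2}\nabla h(y)$ to track how the kernel transforms, and then appeal to symmetry for the inverse direction. The only cosmetic difference is that the paper works directly with the random variable $\Sigma^{1/2}X+\nu$ rather than changing variables in the integral; your explicit mention of checking that $\mathcal{F}(p)$-membership transfers under the affine change of variable is a point the paper leaves implicit.
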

 \begin{proof}
Let $p$ (resp. $q$) be the density of $\E_d(0, I_d, \phi)$ (resp. $\E_d(\nu, \Sigma, \phi)$). If $\pmb \tau_X$ is a Stein kernel for $X \sim \E_d(0, Id, \phi)$, then for all $f \in \mathcal C^\infty_c(\Omega_p)$, $\E[\pmb \tau_X(X) \nabla f(X)] = \E[ Xf(X)]$. Setting $f(x) = g(\Sigma^{1/2}\, x+\nu)$, we deduce that for all $g\in \mathcal C^\infty_c(\Omega_q)$, we have $\E[\pmb \tau_X(X) \Sigma^{1/2}\nabla g(\Sigma^{1/2}X+\nu)] = \E[X g(\Sigma^{1/2}X+\nu)]$. Thus
$$\E[\Sigma^{1/2}\pmb \tau_X(\Sigma^{-1/2}( (\Sigma^{1/2}X+\nu) - \nu)) \Sigma^{1/2}\nabla g(\Sigma^{1/2}X+\nu)] = \E[(\Sigma^{1/2}X+\nu-\nu) g(\Sigma^{1/2}X+\nu)].$$
It follows $x\mapsto \Sigma^{1/2}\pmb \tau_X(\Sigma^{-1/2}(x - \nu)) \Sigma^{1/2}$ is a Stein kernel for $\Sigma^{1/2}X+\nu$. The converse is shown in the same way.
 \end{proof}

 Now we state the following result, due to
 \cite{landsman2008stein,landsman2015some} but for which we give a new
 proof.
\begin{lemma}[Proposition
2, \cite{landsman2015some}]\label{lemm:lemmasteik}
If $X \sim E_d(\nu, \Sigma, \phi)$ then the matrix
\begin{equation}
  \label{eq:elliptstek}
\pmb \tau_1(x)  = \left( \frac{1}{\phi((x-\nu)^T\Sigma^{-1}(x-\nu)/2)}
\int_{(x-\nu)^T\Sigma^{-1}(x-\nu)/2}^{+\infty}\phi(u) du \right) \Sigma,
\end{equation}
is a Stein kernel for $X$ if $\pmb \tau_1 \in \mathcal{F}(p)$. 
\end{lemma}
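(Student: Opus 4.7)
The plan is to exploit the transformation result of Proposition \ref{prop:sigmaid} to reduce the verification to the spherical case $\nu=0$, $\Sigma=I_d$, where the computation becomes transparent. Indeed, if $\pmb\tau$ is a Stein kernel for $E_d(0,I_d,\phi)$ and has the form $\pmb\tau(y) = \psi(|y|^2/2)\,I_d$ for some scalar function $\psi$, then $x\mapsto \Sigma^{1/2}\pmb\tau(\Sigma^{-1/2}(x-\nu))\Sigma^{1/2} = \psi\big(\tfrac12(x-\nu)^T\Sigma^{-1}(x-\nu)\big)\,\Sigma$ is a Stein kernel for $E_d(\nu,\Sigma,\phi)$, which matches the expression \eqref{eq:elliptstek} upon taking $\psi(t) = \phi(t)^{-1}\int_t^{+\infty}\phi(u)\,du$. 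So the task reduces to a direct pointwise verification on $\Omega_p$ for the standard spherical density.

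For that verification, fix $\nu=0$, $\Sigma=I_d$, so that $p(y)=\kappa\phi(|y|^2/2)$. By Definition \ref{def:multiteikdef} I need to check, for each canonical direction $e_i$, that the $i$-th row of $\pmb\tau_1$, namely the vector field $\tau_{p,i}(y) = \psi(|y|^2/2)\,e_i$, satisfies $\mathcal{T}_{\mathrm{div},p}(\tau_{p,i})(y) = -y_i$ on $\Omega_p$. The central observation is the telescoping identity
\begin{equation*}
  \tau_{p,i}(y)\,p(y) \;=\; \kappa\,\psi(|y|^2/2)\,\phi(|y|^2/2)\,e_i \;=\; \kappa\Big(\int_{|y|^2/2}^{+\infty}\!\phi(u)\,du\Big)\,e_i,
\end{equation*}
so that only the scalar coefficient depends on $y$ and only its $i$-th partial derivative survives the divergence. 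By the chain rule combined with the fundamental theorem of calculus (applicable since $\phi$ is $\mathcal{C}^1$ on $\Omega_\phi$ by Assumption B),
\begin{equation*}
  \mathrm{div}(\tau_{p,i}\,p)(y) \;=\; \partial_i\Big[\kappa\!\int_{|y|^2/2}^{+\infty}\phi(u)\,du\Big] \;=\; -\kappa\,\phi(|y|^2/2)\,y_i \;=\; -p(y)\,y_i,
\end{equation*}
and dividing by $p(y)>0$ on $\Omega_p$ yields the required identity.

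The only subtlety is the membership $\pmb\tau_1\in\mathcal{F}(p)$, which is included as a hypothesis of the statement and therefore need not be established here; one should, however, note that $\psi$ is well defined on $\Omega_\phi$ because $\phi$ is integrable at infinity (as $p$ is a probability density), and that the pushforward of $\Omega_p$ under $x\mapsto\tfrac12(x-\nu)^T\Sigma^{-1}(x-\nu)$ is precisely $\Omega_\phi$ by \eqref{eq:omegapell}, so that the pointwise identity extends back to the elliptical case via the bijection of Proposition \ref{prop:sigmaid}. I do not anticipate any real obstacle beyond making sure the change of variables in the first step is applied cleanly and that all manipulations are carried out on $\Omega_p$ where $p$ is $\mathcal{C}^1$ and strictly positive; the computation itself is essentially a one-line exact differentiation.
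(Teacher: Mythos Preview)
Your argument is correct and takes a genuinely different, more elementary route than the paper's. You reduce to the spherical case via Proposition~\ref{prop:sigmaid} and then verify the pointwise defining equation~\eqref{eq:65} directly: the product $\tau_{p,i}(y)p(y)$ telescopes to $\kappa\int_{|y|^2/2}^{+\infty}\phi(u)\,du\cdot e_i$, whose divergence is $-p(y)y_i$ by the fundamental theorem of calculus. The paper instead works through the score identity~\eqref{eq:3}: it inverts~\eqref{eq:elliptscore} to express $x-\nu$ in terms of $\rho_p$, applies integration by parts to $\E_p[Xf(T)g(X)]$ with $T=X^T\Sigma^{-1}X/2$, and arrives at an ODE $f(t)(1+\psi'(t))+\psi(t)f'(t)=-1$ (with $\psi=\phi/\phi'$) that must be solved to recover the kernel. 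Your verification is shorter and fully rigorous given the hypothesis $\pmb\tau_1\in\mathcal{F}(p)$; the paper's approach is more constructive --- it shows how one would \emph{discover} the kernel rather than merely check it --- and its ODE viewpoint feeds directly into the generalization of Proposition~\ref{prop:stkernelell}, where the same manipulation with two free functions $a,b$ yields the richer family of kernels~\eqref{eq:linkab}. One small point: when you invoke Proposition~\ref{prop:sigmaid} you implicitly use that $\pmb\tau_1\in\mathcal{F}(p)$ for the elliptical density is equivalent to the spherical pullback lying in $\mathcal{F}(q)$; this is immediate from the affine change of variables, but worth a sentence.
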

\begin{proof} For transparency here we consider general $\nu$ and
  $\Sigma$.
  We will show the Stein identity holds for functions in
  $\mathcal C^\infty_c(\Omega_p)$. Consider any functions
  $f : \R \to \Omega_\phi \, \in \mathcal{F}(p)$ and
  $g : \Omega_p \to \R \, \in \mathrm{dom}(p,f)$.  We
  start by inverting \eqref{eq:elliptscore} to get
 \begin{equation*} 
   x-\nu = \Sigma\rho_p(x) 
  \frac{\phi((x-\nu)^T\Sigma^{-1}(x-\nu)/2)}{\phi'((x-\nu)^T\Sigma^{-1}(x-\nu)/2)}. 
 \end{equation*}
 Fixing $\nu=0$ (see Proposition \ref{prop:sigmaid}) and introducing the temporary notations
 $\psi(t) = \phi(t)/\phi'(t)$ and $t = x^T\Sigma^{-1}x/2$,
$T = X^T\Sigma^{-1}X/2$, by 
 \eqref{eq:3}, 
\begin{align*}
&  \E_p \left[ Xf(T)g(X) \right]   = \E_p \left[ \Sigma\rho_p(X) 
  \psi(T)
 f(T) g(X) \right] \\
& = - \Sigma   \E_p \left[ \nabla \left\{
  \psi(T)f(T)
  g(X) \right\} \right] \\
& = - \Sigma   \E_p \left[ \nabla \left\{
  \psi(T)f(T)
 \right\}  g(X)  \right] - \Sigma  \E_p \left[
  \psi(T)f(T)
  \nabla g(X)\right] \\
&  = - \Sigma   \E_p \left[   \left(   \psi'( T)f(T)+  \psi( T)f'(T)\right)
\Sigma^{-1}X g(X)  \right]  - \Sigma  \E_p \left[
  \psi(T) f(T)
  \nabla g(X)\right]
\end{align*} 
and thus 
\begin{align}\label{eq:70}
 \E_p \left[ \left( f(T)(1+  \psi'( T))+  \psi( T)f'(T)
  \right)  X g(X)\right] = - \E_p \left[\psi(T)f(T) \Sigma  \nabla
  g(X)\right]. 
\end{align}
In order to obtain a Stein kernel, it suffices to choose $f$ solution
to the ODE
\begin{align*}
  f(t)(1+  \psi'( t))+  \psi(t)f'(t) = -1 
\end{align*}
to ensure that  the function $x \mapsto \psi(t)f(t) \Sigma $ satisfies
\eqref{eq:defstek2},  and is a Stein kernel in the sense of Definition
\ref{def:multiteikdef}. 
Now note that the function
$u(t) := \frac{1}{\phi(t)} \int_{t}^{+\infty}\phi(u) du$ satisfies
$u' = - \frac{1}{\psi}  u - 1$; hence the choice $f=u/\psi$ satisfies
$(f\psi)' = (u)'=  - \frac{1}{\psi}  u - 1$  and thus 
is exactly what we need. Setting $t = x^T\Sigma^{-1}x/2$ 
the claim follows.
\end{proof}
\begin{rem}
In dimension $d=1$, the Stein kernel \eqref{steinkernel1d} of $p$ is  the function
    $ \tau_p(x) = \frac{1}{p(x)} \int_x^{+\infty} (u-\nu) p(u) du$. 
     Changing
    variables in \eqref{eq:elliptstek} for $x \ge 0$,
\begin{align*}
 \pmb \tau_p(x) &  =  \frac{1}{\phi((x-\nu)^2/2)}
\int_{(x-\nu)^2/2}^{+\infty}\phi(u) du \\ 
  & = \frac{1}{p(x)}
    \int_{(x-\nu)^2/2}^{+\infty}p(\sqrt{2u}) du 
   = \frac{1}{p(x)} \int_x^{\infty} (y-\nu) p(y) dy.
\end{align*}
The case $x<0$ is treated similarly. Hence \eqref{eq:elliptstek}
indeed recovers the Stein kernel.
\end{rem}

  The identity \eqref{eq:elliptstek} {resulting from  Lemma~\ref{lemm:lemmasteik}} has
  {found many applications, \cite{adcock2007,adcock2010,landsman2013note,adcock2014,landsman2015some,VY2017}
  and the references therein.}

The following proposition gives a way of finding Stein kernels of a
particular form which generalizes Lemma \ref{lemm:lemmasteik}.

\begin{prop}
\label{prop:stkernelell}
Let $a, b \; : \; \Omega_\phi \rightarrow \R$ two $\mathcal C^1$
functions such that, for all all $u\in \Omega_\phi$,
\begin{equation}
\label{eq:linkab}
\frac{(a(t) \phi(t))'}{\phi(t)}   + 2t \frac{(b(t) \phi(t))'}{\phi(t)}
+ (d+1)b(t) + 1 = 0   
\end{equation}
and set
$$\pmb \tau_{a, b}(t) = a(t) \Sigma + b(t) \,(x-\nu)(x-\nu)^T$$
with $t = \frac{1}{2}(x-\nu)^T \Sigma^{-1} (x-\nu)$. If
$\pmb \tau_{a, b} \in \mathcal{F}(p)$ then this function is a Stein
kernel for $X \sim \E_d(\nu, \Sigma, \phi)$.  Moreover,
  if $\phi$ is continuous and positive on $[0,+\infty)$, then the
  Stein identity \eqref{eq:steinkernelidentity} holds for every test
  function $g \in \mathcal C^1(\R^d)$ such that $g(x)a(t)\phi(t)$ and
  $g(x)b(t) t$ go to zero when $|x|\rightarrow+\infty$.
\end{prop}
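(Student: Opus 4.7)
My plan is to verify the defining relation \eqref{eq:65} row-by-row by a direct calculation of the Stein divergence, and then to extend the resulting Stein identity to the stated class of test functions via the divergence theorem on balls.

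\textbf{Reduction and row-wise divergence.} By Proposition \ref{prop:sigmaid} I may assume $\nu = 0$ and $\Sigma = I_d$, so that $t = |x|^2/2$ and the $i$-th row of $\pmb\tau_{a,b}$ is the vector field $\mathbf{F}_i(x) := a(t) e_i + b(t) x_i x$. I would write $\mathcal{T}_{\mathrm{div},p}(\mathbf{F}_i) = \langle \mathbf{F}_i, \rho_p\rangle + \mathrm{div}(\mathbf{F}_i)$ using Example \ref{contstruction}, with the elliptical score $\rho_p(x) = x\,\phi'(t)/\phi(t)$ given by \eqref{eq:elliptscore}. A straightforward computation using $\nabla t = x$ and the product rule yields
\[
\langle \mathbf{F}_i, \rho_p\rangle = x_i\bigl(a(t) + 2tb(t)\bigr)\frac{\phi'(t)}{\phi(t)}, \qquad \mathrm{div}(\mathbf{F}_i) = x_i\bigl[a'(t) + 2tb'(t) + (d+1)b(t)\bigr],
\]
the second identity coming from $\mathrm{div}(a(t)e_i) = a'(t)x_i$ and $\mathrm{div}(b(t)x_i x) = b'(t)x_i|x|^2 + (d+1)b(t)x_i$.

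\textbf{Matching the ODE.} Adding the two expressions and regrouping using $(a\phi)'/\phi = a' + a\phi'/\phi$ and $(b\phi)'/\phi = b' + b\phi'/\phi$, I obtain
\[
\mathcal{T}_{\mathrm{div},p}(\mathbf{F}_i) = x_i\left[\frac{(a\phi)'(t)}{\phi(t)} + 2t\,\frac{(b\phi)'(t)}{\phi(t)} + (d+1)b(t)\right].
\]
The hypothesis \eqref{eq:linkab} forces the bracket to equal $-1$, so $\mathcal{T}_{\mathrm{div},p}(\mathbf{F}_i) = -x_i = \nu_i - x_i$. This is exactly the defining relation of Definition \ref{def:multiteikdef}, so $\pmb\tau_{a,b}$ is a Stein kernel whenever it lies in $\mathcal{F}(p)$.

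\textbf{Extending the class of test functions.} For the second claim, I would exploit the product rule in the form $\mathrm{div}(g p \mathbf{F}_i) = \langle \nabla g, \mathbf{F}_i\rangle p + g p\, \mathcal{T}_{\mathrm{div},p}(\mathbf{F}_i)$ and apply the divergence theorem on the ball $B_R$, giving
\[
\int_{B_R} \langle (\pmb\tau_{a,b})_i, \nabla g\rangle p\, dx - \int_{B_R} x_i g\, p\, dx = \int_{\partial B_R} g\, p\, \mathbf{F}_i \cdot n\, dS.
\]
On $\partial B_R$ the outer normal is $n = x/R$, so $\mathbf{F}_i \cdot n = x_i\bigl(a(t)/R + Rb(t)\bigr)$ with $t = R^2/2$, and using $|x_i|\le R$ and $R^2 = 2t$ the boundary integrand is pointwise controlled by a constant multiple of $|g(x)|\phi(t)\bigl(|a(t)| + t|b(t)|\bigr)$. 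Continuity and positivity of $\phi$ on $[0,\infty)$ ensure that $\Omega_p = \R^d$ (up to a null set), and the decay hypotheses $g(x)a(t)\phi(t) \to 0$ and $g(x)b(t)t \to 0$ make the boundary integral vanish as $R\to\infty$. Passing to the limit gives \eqref{eq:steinkernelidentity} for each $i$, and hence the matrix Stein identity.

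The only genuinely substantial step is the algebra in the middle of the first paragraph: one must correctly collect the four terms coming from $\langle \mathbf{F}_i, \rho_p\rangle$ and $\mathrm{div}(\mathbf{F}_i)$ and recognize them as precisely the derivatives of $a\phi$ and $b\phi$ weighted as in \eqref{eq:linkab}. Everything else -- the reduction via Proposition \ref{prop:sigmaid} and the boundary argument -- is routine.
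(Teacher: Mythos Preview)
Your argument is correct and essentially identical to the paper's. The paper also reduces to $\nu=0$, $\Sigma=I_d$ via Proposition \ref{prop:sigmaid}, then computes $\dv(\phi(t)(a(t)I_d+b(t)xx^T))$ directly using $\dv(xx^T)=(d+1)x$, $xx^Tx=2tx$ and $\nabla t=x$; you do the same computation row-by-row through the score decomposition $\mathcal{T}_{\dv,p}\mathbf{F}_i=\langle\mathbf{F}_i,\rho_p\rangle+\dv(\mathbf{F}_i)$, which is just a different bookkeeping of the same product rule. For the extension of the Stein identity, the paper's proof is in fact terser than yours---it simply observes that $\pmb\tau(x)p(x)g(x)\to0$ and invokes the divergence theorem---so your explicit treatment on balls $B_R$ is a mild improvement in detail.
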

\begin{proof}
From Proposition \ref{prop:sigmaid}, we can assume $\nu=0$ and
$\Sigma= I_d$. It is readily checked that
$
\dv( I_d ) = 0, \, 
\dv(x x^T) = (d+1) x,  \,  x x^T x = 2t x.
$
Thus, by the chain rule and noting that $\nabla t = x$,
\begin{align*}
&\dv(\phi(t) (a(t) I_d + b(t) x x^T) ) \\
  &= (a(t) \phi(t))' I_d \, x + a(t)\phi(t) \dv(I_d) + (b(t) \phi(t))' x x^T \, x + b(t)\phi(t) \dv(x x^T) \\
&=  (a(t) \phi(t))' x  + 2t (b(t) \phi(t))' x + (d+1)b(t)\phi(t) x.
\end{align*}
Hence $a(t) I_d + b(t) x x^T$ is a Stein kernel if the last quantity is equal to $-\phi(t) x$, so that  the result follows.

To see that a test function satisfying the stated conditions verifies
\eqref{eq:steinkernelidentity}, simply note that since
$(x-\nu)(x-\nu)^T$ is of order $t$ when $|x|$ is large, the conditions
imply that $\pmb\tau(x) p(x) f(x) \rightarrow 0$ when $|x|$ is large,
and the result follows from the divergence theorem (see discussion
below Definition \ref{def:multiteikdef}). 
\end{proof}

\begin{remark}
\begin{enumerate} 
\item A particular instance of Proposition \ref{prop:stkernelell} is
  given by the following expression: 
\begin{equation}\label{lem:steikellipt2}
   \pmb \tau_{2, \beta}  (x) =   \frac{(\beta+ 2)-2 \frac{\phi''(t)/\phi'(t)}{\phi'(t)/\phi(t)}}{(\beta-2)(d-1)}\left(  2 \left(
      \frac{d-1}{(\beta+ 2) \frac{\phi'(t)}{\phi(t)} - 2
      \frac{\phi''(t)}{\phi'(t)}} + t \right)\Sigma - (x-\nu) (x-\nu)^T \right) \end{equation}
is a Stein kernel for $X$   for all $\beta \neq 2$ as
    long as $\pmb \tau_{2, \beta} \in \mathcal{F}(p)$. 
\item 
  It is straightforward to generalize the previous proposition in the
  following way. Here without loss of generality we take
  $\Sigma = I_d$ and $\nu=0$. Assume we are given matrices
  $\mathbf U_1, \ldots, \mathbf U_m$ such that for every
  $i = 1,\ldots,m$ and some functions
  $\alpha_i, \beta_i \; : \; \R \rightarrow \R$,
$$\dv \mathbf U_i = \alpha_i(t) x, \quad \mathbf U_i x = \beta_i(t) x.$$
If
$$1+ \sum_{i=1}^m a_i \alpha_i + \frac{(a_i \phi)'}{\phi} \beta_i = 0,$$
then $a_1(t) \mathbf U_1(t) + \ldots + a_m(t) \mathbf U_m(t)$ is a
Stein kernel for $X \sim \E_d(0, I_d, \phi)$ if this function is in
the class $ \mathcal{F}(p)$.
\end{enumerate} 
\end{remark}

By setting $b \equiv 0$, we obtain
$a(t) =\frac{1}{\phi(t)} \int_t^{+\infty} \phi(u) du,$
and we retrieve  Lemma \ref{lemm:lemmasteik}.
Setting $a \equiv 0$ leads to the following
\begin{cor} Set  $t = \frac{1}{2}(x-\nu)^T \Sigma^{-1} (x-\nu)$. If $ \int_0^{+\infty} u^{\frac{d-1}{2}} \phi(u) du < \infty$, then
\begin{equation}
\label{eq:stkerellip2}
\left( \frac{t^{-\frac{d+1}{2}}}{2\phi(t)} \int_t^{+\infty} u^{\frac{d-1}{2}} \phi(u) du \right)(x-\nu)(x-\nu)^T,
\end{equation}
 is a Stein kernel for  $X \sim \E_d(\nu, \Sigma,
 \phi)$.  Moreover, if $\Omega_p = \R^d$, the Stein
   identity \eqref{eq:steinkernelidentity} holds for every function $f
   \in \mathcal C^1(\R^d)$ such that $f(x) t^{\frac{d-1}{2}}$ is
   bounded. 
\end{cor}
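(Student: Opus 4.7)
The plan is to apply Proposition \ref{prop:stkernelell} with the choice $a \equiv 0$ and solve explicitly for the unknown function $b$. With $a=0$, the compatibility condition \eqref{eq:linkab} collapses to the scalar ODE
\begin{equation*}
2t\,\frac{(b(t)\phi(t))'}{\phi(t)} + (d+1)\,b(t) + 1 = 0,
\end{equation*}
or equivalently, writing $B(t) = b(t)\phi(t)$,
\begin{equation*}
2t\, B'(t) + (d+1)\,B(t) + \phi(t) = 0.
\end{equation*}

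The key observation is that $t^{(d-1)/2}$ is an integrating factor: multiplying through by $t^{(d-1)/2}$ gives $2\,\frac{d}{dt}\bigl(t^{(d+1)/2} B(t)\bigr) = -t^{(d-1)/2}\phi(t)$. The integrability assumption $\int_0^\infty u^{(d-1)/2}\phi(u)\,du < \infty$ ensures that $t^{(d+1)/2}B(t) \to 0$ as $t \to \infty$, so integrating from $t$ to $\infty$ yields
\begin{equation*}
B(t) = \frac{t^{-(d+1)/2}}{2}\int_t^{+\infty} u^{(d-1)/2}\phi(u)\,du,
\end{equation*}
which is precisely the coefficient in \eqref{eq:stkerellip2}. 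By Proposition \ref{prop:stkernelell} (with $a \equiv 0$), the resulting matrix $b(t)(x-\nu)(x-\nu)^T$ is then a Stein kernel, provided it belongs to $\mathcal{F}(p)$; this membership is built into the hypothesis as stated.

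For the second claim regarding the enlarged class of test functions $f \in \mathcal{C}^1(\R^d)$ such that $f(x)\,t^{(d-1)/2}$ is bounded, I would invoke the divergence-theorem argument already discussed immediately after Proposition \ref{prop:stkernelell}. Specifically, one needs $\pmb\tau(x) p(x) f(x) \to 0$ as $|x| \to \infty$; since $\|(x-\nu)(x-\nu)^T\|$ is of order $t$ and $b(t)\phi(t) = B(t) = \tfrac{1}{2}t^{-(d+1)/2}\int_t^\infty u^{(d-1)/2}\phi(u)\,du$, the product $b(t)\,t\,\phi(t)$ decays like $t^{-(d-1)/2}\int_t^\infty u^{(d-1)/2}\phi(u)\,du$, and the stated growth bound on $f$ is exactly what is needed to make the boundary contribution vanish when $\Omega_p = \R^d$. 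The only nontrivial step is verifying this boundary decay carefully, but it reduces to elementary asymptotics of the tail integral $\int_t^\infty u^{(d-1)/2}\phi(u)\,du$ near infinity, and no new ideas beyond Proposition \ref{prop:stkernelell} are required.
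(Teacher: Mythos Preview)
Your proof is correct and follows essentially the same route as the paper: set $a\equiv 0$ in Proposition \ref{prop:stkernelell}, solve the resulting first-order linear ODE for $b\phi$ via the integrating factor $t^{(d+1)/2}$, and select the solution vanishing at infinity (which the integrability hypothesis permits). Your handling of the enlarged test-function class likewise mirrors the paper's divergence-theorem argument, noting that $\|\pmb\tau(x)\,p(x)\,f(x)\|$ is $O\bigl(\int_t^\infty u^{(d-1)/2}\phi(u)\,du\bigr)$ when $f(x)\,t^{(d-1)/2}$ is bounded.
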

\begin{proof}
With $a\equiv 0$, \eqref{eq:linkab} becomes
$$(b\phi)' + \frac{d+1}{2t} b\phi = - \frac{\phi}{2t},$$
which integrates to
$$b(t) = \frac{t^{-\frac{d+1}{2}}}{2\phi(t)} \int_t^{+\infty} u^{\frac{d-1}{2}} \phi(u) du.$$

Now if $f$ is as stated in the corollary, since $(x-\nu)(x-\nu)^T$ is
of order $t$ for large $|x|$, then
$\tau(x) f(x) p(x) = \mathcal O(\int_t^{+\infty} u^{\frac{d-1}{2}}
\phi(u) du )$ for large $|x|$, so that $\pmb\tau(x) f(x) p(x)$ goes to
zero at infinity, and the Stein identity follows again from the
divergence theorem.
\end{proof}
\begin{rem}
For $d=1$, \eqref{eq:stkerellip2} leads to the classical Stein kernel \eqref{steinkernel1d}. Indeed, assuming $\nu=0$, for $x>0$,
\begin{align*}
\frac{t^{-1}}{2\phi(t)} \int_t^{+\infty}  \phi(u) du & = \frac{1}{x^2p(x)} \int_{x^2/2}^{+\infty} p(2\sqrt u) du \\
&= \frac{1}{x^2p(x)} \int_x^{+\infty} s\, p(s) ds,
\end{align*}
and multiplying by $x^2$ yields the claim. The case $x< 0$ is treated similarly.
\end{rem}

In the next three subsections we develop Stein kernels
    for three distributional families: the multivariate Gaussian, the
    power exponential, and the multivariate Student $t$-distribution.
    Similar computations are possible for the symmetric generalized
    hyperbolic distribution but are not pursued here.
    We refer to \cite{landsman2015some,VY2017} and the references
    therein.

 \subsection{The multivariate Gaussian
   distribution}
\label{sec:stein-oper-gauss}
Consider a Gaussian $d$-dimensional random vector
$Z \sim \mathcal{N}_d({\nu},  \Sigma)$ with pdf $\varphi$ 
on $\R^d$ and let $\mu(dx) = \varphi(x) dx$ be the corresponding
probability measure. {As}
 $Z \sim E_d (\nu, \Sigma,\phi)$ with
$\phi(t) = e^{-t}$ {and}  $\phi'(t)/\phi(t) = -1$, from
\eqref{eq:elliptscore}, we {recover that}
$ \rho_{\varphi}(x) = - \Sigma^{-1} (x-\nu)$ is the score function of
$\varphi$. Since $\frac{1}{\phi(t)} \int_t^{\infty} \phi(u) du = 1$
for all $t$, Lemma \ref{lemm:lemmasteik} shows that
$ \pmb \tau_{1} = \Sigma$ is, as is well-known, a Stein kernel for
$\varphi$. 
Moreover, 
\eqref{lem:steikellipt2} gives, after some
simplifications, the following family of Stein kernels which are indexed by
$\beta \neq 2$ (we set $\nu=0$ to save space): 
    \begin{equation*}
      \pmb \tau_{2, \beta}(x) = 
\frac{\beta}{(\beta-2)(d-1)} \big(x^T\Sigma^{-1}x\Sigma -x x^T \big) -
\frac{2}{\beta-2}  \Sigma.  
    \end{equation*}
It is easy to check that  these functions are in the class $ \mathcal{F}(p)$.
Several interesting cases stand out. Sending
$\beta$ to 0, on the one hand, and to $+\infty$ on the other hand, we
obtain
\begin{align*}
\pmb \tau_{2, 0}(x) = \Sigma \mbox{ and }  \pmb \tau_{2, \infty}(x) =
  \frac{1}{d-1} \big(x^T\Sigma^{-1}x\Sigma- x x^T\big).
\end{align*}
In dimension $d \ge 3$, setting $\beta = 2(d-1)$ we get
    \begin{equation*}
     \pmb \tau_{2, 2(d-1)}(x) = \frac{1}{d-2} 
 \big(x^T\Sigma^{-1}x\Sigma- x x^T  - \Sigma\big)
    \end{equation*}
    {Moreover, we find} that the Gaussian
    multivariate normal satisfies
\begin{equation}
  \label{eq:genstekcovid}
\frac{\beta}{(\beta-2)(d-1)}  \EE \left[\big(X^T\Sigma^{-1}X\Sigma -X
  X^T \big) \nabla g(X) \right]  = \EE \left[ \frac{2}{\beta-2}
  \Sigma \nabla  g(X)  + X g(X)\right]
\end{equation}
for all $g \in \mathrm{dom}(\varphi, \pmb \tau_{2, \beta})$ and all
$\beta \neq 2$.  In particular, every $g \in \mathcal C^1(\R^d)$ with
at most polynomial growth at infinity lies in this domain.

\subsection{Power exponential distribution}
\label{sec:power-expon-distr}

Consider a $d$ random vector
$Z \sim \mathrm{PE}_{d, \zeta}(\nu, \Sigma)$ distributed according to
the multivariate power exponential distribution with power $\zeta>0$,
location $\mu$, scale $b$, shape $\Sigma\in \R^d\times \R^d$ and pdf
\begin{equation}
\varphi_{\zeta}(x)= a_{d, \zeta} \left|\Sigma \right|^{-1/2} \mathrm{exp}\left(
    - b \left( (x-\nu)^T \Sigma^{-1}(x-\nu)  \right)^{\zeta}\right)  
\label{eq:18}
\end{equation}
on $\R^d$ ($a_{d, \zeta}$ is the normalizing constant), $\zeta \in (0,
\infty)$, and let
$\mu(dx) = \varphi_{\zeta}(x) dx$ be the corresponding probability
measure.  
Clearly
$Z \sim \mathrm{E}_{d}(\nu, \Sigma, \phi)$ with
$\phi(t) = e^{-b t^{\zeta}}$ so that
$\phi'(t)/\phi(t) = - b\zeta t^{{\zeta-1}}$ and
$\phi''(t)/\phi'(t) = - b\zeta t^{{\zeta-1}} + \frac{\zeta-1}{t}$.
From \eqref{eq:elliptscore}, 
the score function of
$\varphi_{\zeta}$ is
\begin{equation}
  \label{eq:19}
\rho_{{\zeta}}(x) = -2 b \zeta ((x-\nu)^T
 \Sigma^{-1}(x-\nu))^{\zeta-1} \Sigma^{-1} (x-\nu).
\end{equation}
While, except when  $\zeta=1$,the kernel from
 \eqref{lemm:lemmasteik} does not lead to palatable expressions, applying 
\eqref{lem:steikellipt2} we obtain
(for $\zeta \neq 1$)
\begin{equation}\label{eq:26}
  \pmb \tau_{2, \zeta}(x) =  \frac{\beta + 2  \frac{\zeta-1}{b\zeta t^{\zeta}}}{(\beta-2)(d-1)} \left(  \left(
      1 -  \frac{d-1}{\beta b\zeta t^{\zeta} +2
        (\zeta-1)}\right)(x - \nu)^T\Sigma^{-1}(x - \nu) \Sigma - (x -
    \nu) 
(x - \nu)^T\right).
\end{equation}
These functions are Stein kernels : they are in the class $
\mathcal{F}(p)$, since $\pmb \tau_{2, \zeta} \,\varphi_\zeta \rightarrow 0$ when $x \rightarrow +\infty$.
\%eqref{eq:26}. We do not provide details here.

Note again that the Stein identity \eqref{eq:steinkernelidentity}
holds for every $g \in \mathcal C^1(\R^d)$ with at most polynomial
growth at infinity, since in this case we have
$\pmb \tau_{2,\zeta} \, g \, \varphi_\zeta \rightarrow 0$ at infinity.


\subsection{The multivariate Student $t$-distribution with $k > 1$}
\label{sec:mult-stud-t}

Consider a $d$ random vector $X \sim t_{k}(\nu, \Sigma)$ distributed
according to the multivariate Student-$t$ distribution with $k >1 $
degrees of freedom, location $\nu\in \R^d$, shape
$\Sigma\in \R^d\times \R^d$ and pdf 
\begin{equation}
  \label{eq:30}
  t_k(x) = c_{k, d } |\Sigma|^{-1/2} \left[
    1+\frac{(x-\nu)^T\Sigma^{-1}(x-\nu)}{k}\right]^{-(k+d)/2} 
\end{equation}
with normalizing constant
$c_{k, d, \Sigma} =
\Gamma((k+d)/2)/(\Gamma(k/2)k^{d/2}\pi^{d/2})$. Let
$\mu(dx)= t_k(x) dx$ be the corresponding probability measure. The
assumption that $k>2$ ensures that this distribution has finite mean
and finite variance.

This distribution is an elliptical distribution with
$\phi(t) = (1+2t/k)^{-(k+d)/2}$ and hence
$\phi'(t)/\phi(t) = -(k+d)/(k+2t)$ leading to the score function
  \begin{equation} 
  \label{eq:43}
\rho_{t_k}(x) = -(k+d)\frac{\Sigma^{-1}(x-\nu)}{k+(x-\nu)^T\Sigma^{-1}(x-\nu)}.
\end{equation}
Moreover, 
  \begin{equation*}
      \frac{1}{\phi(t)} \int_t^{+\infty}  \phi(u) du = \frac{k+2t}{d+k-2}
  \end{equation*}
  (from $k > 1$ it follows that $d+k>2$) and hence Lemma
  \ref{lemm:lemmasteik} gives that
\begin{equation}
  \label{eq:steikstud1}
  \pmb \tau_1(x) =  \frac{(x-\nu)^T\Sigma^{-1}(x-\nu)+k}{d+k-2} \Sigma 
\end{equation}
is a Stein kernel for the multivariate Student distribution for
$k > 1$, as then $ \pmb \tau_1 \in \mathcal{F} (t_k)$.

Similarly, using that  $\phi''(t)/\phi'(t) = -(d+k+2)/(k+2t)$, Lemma
\ref{lem:steikellipt2} gives a family of Stein 
kernels which are indexed by $\beta \in \RR$: 
 \begin{align*}
  &   \pmb \tau_{2, \beta} (x) \\
& = 
\frac{\beta(d+k) - 4}{(d+k)( \beta-2)(d-1)} \left( 2 \left(
    \frac{(d-1)(k+2t)}{4-\beta(d+k)} + t 
\right) \Sigma - (x - \nu) (x - \nu)^T  \right) \\
 & =
\frac{\beta(d+k) -4 }{(d+k)( \beta-2)(d-1)} \left(  2 \left(
    \frac{(d-1)k +t \left( 2(d + 1)-\beta(d+k) \right))}{4-\beta(d+k)}  
\right) \Sigma - (x - \nu) (x - \nu)^T  \right).
   \end{align*}
   It is easy to verify that
   $ \pmb \tau_{2, \beta} \in \mathcal{F}(t_k).$ If we
   choose $\beta$ so that $(d+k)\beta = 2(d+1)$, i.e.
   $\beta = 2(d+1)/(d+k)$ then, after simplifications, we obtain for
   $k > 2$
 \begin{equation}
  \label{eq:steikstu2}
  \pmb \tau_2(x) = \frac{1}{k-1} \left(  (x - \nu) (x - \nu)^T  + k \Sigma \right).
\end{equation}
Since $\pmb\tau_1$ and $\pmb \tau_2$ are of order $t$ for large $|x|$,
and since
$\phi(t) \underset{t\rightarrow +\infty}{\sim} t^{-(k+d)/2}$, the
Stein identity \eqref{eq:steinkernelidentity} holds for every
$g \in \mathcal C^1(\R^d)$ such that
$t^{-(k+d-2)/2} g(x) \rightarrow 0$ at infinity. In particular constant functions verify \eqref{eq:steinkernelidentity} and the Stein kernels are in $\mathcal F(p)$.
Note that both
$\pmb \tau_1$ and $\pmb \tau_2$  simplify to $\tau(x)= (x^2+k\sigma^2)/(k-1)$
when $d=1$; this last quantity is well-known to be the univariate
kernel for the Student-$t$ distribution with $k$ degrees of freedom
and centrality parameter $\nu$, see e.g.\ \cite[page 30]{LRS16}.

There are several types of
operators and identities that can be obtained; {below are some examples.}
\begin{enumerate}
\item {\bf Vector valued operators.}
  Applying the product rule \eqref{eq:prodrulenabla} directly with
  $f(x) = k + (x-\nu)^T\Sigma^{-1}(x-\nu)$ we obtain for
  $g :  \R^d \rightarrow \R \in \mathrm{dom}(f, t_k)$ the operator
  $\mathcal{A}_{t_k}g(x) = (k + (x-\nu)^T\Sigma^{-1}(x-\nu)) \nabla
  g(x) + (2-k-d) \Sigma^{-1}(x-\nu)g(x)$. Taking expectations for 
 $X \sim t_k(\nu, \Sigma)$ we obtain the vector-Stein identity
  \begin{equation}
    \label{eq:47}
   \E\left[  (k + (X-\nu)^T\Sigma^{-1}(X-\nu)) \nabla
  g(X)\right] = (k+d-2) \E \left[\Sigma^{-1}(X-\nu)g(X) \right]
  \end{equation}
By definition of the Stein kernel we also get new Stein operators and
identities. Using $\pmb \tau_1$ recovers \eqref{eq:47}, whereas  using $\pmb \tau_2$ we
obtain 
\begin{equation}
  \label{eq:48}
  \E \left[ ((X-\nu) (X-\nu)^T +k \Sigma )\nabla g(X)  \right] =
  (k-1)\E \left[ (X-\nu) 
    g(X)\right]
\end{equation}
(still with $X \sim t_k(\nu, \Sigma)$). 
\item {\bf Scalar valued operators.} 
  Suppose for simplicity that $\Sigma= I_d$ and $\nu=0$. Taking
  $\mathbf{B}$ successively equal to $\pmb \tau_1$ then $\pmb \tau_2$ in \eqref{eq:axfnuhess}
 leads to
\begin{align*}
&  \mathcal{B}_1g(x) =   - \langle \nabla g(x), x \rangle  +
  \frac{1}{d+k-2}  \left\langle x^Tx+2k, \nabla^2g(x)  
\right\rangle_{\rm{HS}} \\ 
 & \mathcal{B}_2g(x) = 
- \langle \nabla g(x), x \rangle  + \frac{1}{k-1}  \left\langle x x^T +k
  I_d, \nabla^2 g(x) 
\right\rangle_{\rm{HS}}
\end{align*}
acting on functions $g$ such that $\nabla g \in \mathcal{F}_1(t_k)$.
\item {\bf A covariance identity.} 
Starting from \eqref{eq:48} with
  $X = (X_1, X_2)^T \in \R^{d_1}\times \R^{d_2}$ multivariate student
  with location $(\nu_1, \nu_2)$ and shape $
  \begin{pmatrix}
    \Sigma_{11} & \Sigma_{12} \\
\Sigma_{21} & \Sigma_{22}
  \end{pmatrix}
  $, taking $g(x) = g(x_2)$ and considering only the first $d_1$
  components of the resulting identity we obtain 
\begin{align}
  \label{eq:51}
 \E \left[
    ((X_1-\nu_1)(X_2-\nu_2)^T + k \Sigma_{12})\nabla g(X_2)\right]
= (k-1)\E \left[
    (X_1-\nu_1)g(X_2) .
\right]
\end{align}
Many more such covariance identities can be obtained by this approach,
thus complementing those obtained in \cite{adcock2007}. 
\end{enumerate}

\section{Generalities on  Stein kernels}
\label{sec:stein-discrepancies-1}
\label{sec:stein-kernel}

Let $\mathcal{T}_{\mathrm{div}, p}$ be the canonical
Stein operator \eqref{eq:42} acting on $\mathcal{F}(p)$, the
corresponding Stein class.  In this section we explore properties of
the Stein kernels from Definition \ref{def:multiteikdef}.

\begin{prop}[Properties] \label{prop:properties}
Let  $\tau_{p, i}$  be a  Stein kernel for $p$ in the direction
$e_i$ and $\pmb \tau_p$ the matrix with $i^{th}$ raw being $\tau_{p, i}$.  Then 
  \begin{enumerate}
  \item \label{item:1}   For all $j=1, \ldots, d$ we have 
    \begin{equation*}
    \frac{\partial}{\partial x_j} \left( \int_{\R^d}
\tau_{p, i}(x) p(x) dx \right) = \int_{\R^d}
    \frac{\partial}{\partial x_j}(\tau_{p, i}(x) p(x)) dx  =0  .     
    \end{equation*}
  \item \label{item:2} If $p$ admits a second moment, and if
    $x-\nu \in \mathrm{dom}(p,\pmb \tau_{{p}})$, then
    \begin{equation*}
      \E \left[ \pmb \tau_{p}(X)\right] = \mathrm{Var}(X).
    \end{equation*}
  \end{enumerate}
  \end{prop}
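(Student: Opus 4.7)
The plan is to prove each item separately; both claims are direct consequences of the definitions already in place, and neither requires substantial new machinery.

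For Item 1, the substantive content is that the middle expression vanishes, since the first equality is a tautology once that is established. Observe that $\tau_{p,i}\in \mathcal F(p)$ by hypothesis (recall Definition \ref{def:multiteikdef}), so each scalar component of the vector field $\tau_{p,i}$ belongs to $\mathcal F_1(p)$. By Definition \ref{def:steclass}(1), for every $\phi\in \mathcal F_1(p)$ and every canonical unit vector $e_j$ one has $\int_{\R^d}\partial_{e_j}(p\,\phi)\,dx = 0$. Applying this component-by-component to $\tau_{p,i}$ gives
\begin{equation*}
  \int_{\R^d}\frac{\partial}{\partial x_j}\bigl(\tau_{p,i}(x)\,p(x)\bigr)\,dx = 0.
\end{equation*}
The first equality of Item 1 is then automatic: the integral $\int_{\R^d}\tau_{p,i}(x)p(x)\,dx$ is a constant vector (independent of $x_j$), so its $\partial/\partial x_j$ is zero, and this matches the integral of the derivative by differentiation under the integral sign — justified by the integrability of $\nabla(p\phi)$ already built into the definition of $\mathcal F_1(p)$.

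For Item 2, the strategy is to feed linear test functions into the Stein identity \eqref{eq:defstek2}. Fix $j\in\{1,\ldots,d\}$ and take $g_j(x)=x_j-\nu_j$, so that $\nabla g_j(x)=e_j$. The hypothesis $x-\nu\in \mathrm{dom}(p,\pmb\tau_p)$ together with the finite second-moment assumption ensures that $g_j \in \mathcal F(\mathcal A_p)$ and that all appearing expectations are finite. Plugging $g_j$ into \eqref{eq:defstek2} gives
\begin{equation*}
  \E\!\bigl[\pmb\tau_p(X)\,e_j\bigr] \;=\; \E\!\bigl[(X-\nu)(X_j-\nu_j)\bigr].
\end{equation*}
The left-hand side is the $j$-th column of $\E[\pmb\tau_p(X)]$, and the right-hand side is the $j$-th column of $\mathrm{Var}(X)$; entry $i$ reads $\E[(\pmb\tau_p)_{ij}(X)] = \mathrm{Cov}(X_i, X_j)$. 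Letting $j$ range over $1,\ldots,d$ assembles the columns into the matrix identity $\E[\pmb\tau_p(X)] = \mathrm{Var}(X)$.

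No step presents a real obstacle: Item 1 is an unpacking of the Stein class definition, and Item 2 is a single invocation of \eqref{eq:defstek2} applied to $d$ coordinate functions. The only bookkeeping point requiring care is verifying that the linear test functions lie in the appropriate domain, which is exactly the role of the two hypotheses in Item 2.
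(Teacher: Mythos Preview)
Your proof is correct and follows essentially the same approach as the paper: Item~1 is unpacked directly from the requirement $\tau_{p,i}\in\mathcal F(p)$, and Item~2 is obtained by plugging the linear test functions $g_j(x)=x_j-\nu_j$ into the Stein identity \eqref{eq:defstek2}. Your write-up is in fact slightly more detailed than the paper's (you explain the first equality in Item~1 and the column-by-column assembly in Item~2), but the underlying argument is the same.
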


  \begin{proof}
    The first statement follows by the requirement that the kernel
    belongs to $\mathcal{F}(p)$, which in particular imposes that all
    components of $\tau_{p, i}$ belong to $\mathcal{F}_1(p)$. To see
    the second claim, taking expectations in \eqref{eq:29} yields that
    the Stein kernel
    necessarily satisfies 
    \begin{equation}\label{eq:34}
      \mathbb{E} \left[ \pmb \tau_p(X) \nabla g(X) \right] = \E \left[ (X-\nu)g(X) \right]
    \end{equation}
    for all $g : \R^d \to \R$ belonging to $\mathrm{dom}(p, \pmb \tau_{p})$. By
    assumption, $g_i(x) = x_i - \nu_i$ belongs to
    $\mathrm{dom}(p, \pmb \tau_p)$ for all $i=1, \ldots, d$. Plugging these
    functions in \eqref{eq:34} leads to
\begin{equation*}
    \mathbb{E} \left[ (\pmb \tau_{p,i}(X))_j  \right] =\E \left[ (X_i-\nu_i)(X_j-\nu_j) \right]
\end{equation*}
for all $i, j = 1, \ldots, d$. The claim follows. 
  \end{proof}

\begin{prop} Given $k\le d$ and
  $\left\{ i_1, \ldots, i_k \right\}\subset \left\{ 1, \ldots, n
  \right\}$ denote by
  $\mathcal{V} = \left\langle e_{i_1}, \ldots, e_{i_k} \right\rangle$
  the space generated by $e_{i_1}, \ldots, e_{i_k}$. Also, write any
  $x\in \R^d$ as $x = (x^{\mathcal{V}}, x^{\mathcal{V}^{\perp}})$ and
  let $p_{\mathcal{V}} = \int_{\mathcal{V}^{\perp}} p$ be the marginal
  of $p$ on $\mathcal{V}$. Suppose that $p$ admits a $p$-integrable
  Stein kernel
  $\tau_{i_j}= \big(\tau_{i_j,1}, \ldots, \tau_{i_j, d}\big)$ in each
  direction $e_{i_1}, \ldots, e_{i_k}$. Then the vector
  $\tau^{\mathcal{V}}_{i_j}= \big(\tau^{\mathcal{V}}_{i_j, 1}, \ldots,
  \tau^{\mathcal{V}}_{i_j ,k}\big)$ with components
  \begin{equation}
    \label{eq:72}
    \tau^{\mathcal{V}}_{i_j, \ell}(x^{\mathcal{V}}) =\EE\left[ \tau_{i_j,
        i_{\ell}} (X)  \, | \,
      X^{\mathcal{V}} = x^{\mathcal{V}} \right], \quad
    \ell = 1, \ldots, k
  \end{equation}
is a Stein kernel for $p_{\mathcal{V}}$ in the direction $e_{i_j}$. 
\end{prop}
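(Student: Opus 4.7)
The plan is to test the Stein identity for $\tau_{i_j}$ against functions that only depend on the $\mathcal{V}$-coordinates, and then pass to the conditional expectation to expose the marginal Stein identity. By Definition \ref{def:multiteikdef} and the equivalent formulation \eqref{eq:steinkernelidentity}, it suffices to check that $\tau^{\mathcal V}_{i_j}$ lies in $\mathcal F(p_{\mathcal V})$ and that
$$\EE\bigl[\langle \tau^{\mathcal V}_{i_j}(X^{\mathcal V}), \nabla g(X^{\mathcal V}) \rangle\bigr] = \EE\bigl[(X^{\mathcal V}_j - (\nu_{\mathcal V})_j)\, g(X^{\mathcal V})\bigr]$$
for all $g \in \mathcal C^\infty_c(\Omega_{p_{\mathcal V}})$, where $(\nu_{\mathcal V})_j = \nu_{i_j}$ is the mean of $p_{\mathcal V}$ in the $j$-th coordinate (a direct consequence of Fubini).

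First I would fix $g \in \mathcal C^\infty_c(\Omega_{p_{\mathcal V}})$ and lift it to $\tilde g : \R^d \to \R$ by $\tilde g(x) = g(x^{\mathcal V})$. Since $\tilde g$ is constant along $\mathcal V^\perp$, its partial derivative in direction $e_{i_\ell}$ equals $(\partial_\ell g)(x^{\mathcal V})$ for $\ell = 1,\ldots,k$, while $\partial_i \tilde g \equiv 0$ for $i \notin \{i_1,\ldots,i_k\}$. Plugging $\tilde g$ into the Stein identity \eqref{eq:steinkernelidentity} for $\tau_{i_j}$ gives
$$\sum_{\ell=1}^k \EE\bigl[\tau_{i_j, i_\ell}(X)\, (\partial_\ell g)(X^{\mathcal V})\bigr] = \EE\bigl[(X_{i_j} - \nu_{i_j})\, g(X^{\mathcal V})\bigr].$$

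Second, I would condition on $X^{\mathcal V}$ inside each term on the left, using that $(\partial_\ell g)(X^{\mathcal V})$ is $X^{\mathcal V}$-measurable. By definition \eqref{eq:72} of $\tau^{\mathcal V}_{i_j, \ell}$, the tower property yields
$$\EE\bigl[\tau_{i_j, i_\ell}(X)\,(\partial_\ell g)(X^{\mathcal V})\bigr] = \EE_{p_{\mathcal V}}\bigl[\tau^{\mathcal V}_{i_j, \ell}(X^{\mathcal V})\,(\partial_\ell g)(X^{\mathcal V})\bigr],$$
so summing over $\ell$ recovers exactly the desired marginal identity, since the right-hand side collapses to an expectation under $p_{\mathcal V}$.

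Finally, I would check the class membership $\tau^{\mathcal V}_{i_j} \in \mathcal F(p_{\mathcal V})$. The $p$-integrability of each $\tau_{i_j, i_\ell}$ together with Jensen's inequality for conditional expectations gives $p_{\mathcal V}$-integrability of $\tau^{\mathcal V}_{i_j, \ell}$; and the identity just established, read against indicator-approximating sequences, forces the vanishing-integral condition in Definition \ref{def:steclass}. The main subtlety is purely technical: it lies in ensuring sufficient regularity of $\tau^{\mathcal V}_{i_j}$ and in justifying the equality $\Omega_{p_{\mathcal V}} = $ projection of $\Omega_p$ onto $\mathcal V$ (up to a null set) so that the support assumptions of Definition \ref{def:multiteikdef} transfer to the marginal. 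Once these measure-theoretic hypotheses are in place, the identity above, combined with the discussion following \eqref{eq:steinkernelidentity}, immediately yields that $\tau^{\mathcal V}_{i_j}$ is a Stein kernel for $p_{\mathcal V}$ in direction $e_{i_j}$.
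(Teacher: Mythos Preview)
Your approach via the weak (integral) formulation and conditioning is a legitimate alternative to the paper's route, but it has one genuine gap that you underestimate. The lifted function $\tilde g(x)=g(x^{\mathcal V})$ is \emph{not} in $\mathcal C^\infty_c(\Omega_p)$: it has compact support only in the $\mathcal V$-directions and extends over all of $\mathcal V^\perp$. Consequently you cannot simply ``plug $\tilde g$ into the Stein identity \eqref{eq:steinkernelidentity}'', which is stated for compactly supported test functions. To repair this you would need a cutoff $\tilde g_n(x)=g(x^{\mathcal V})\chi_n(x^{\mathcal V^\perp})$ and then show that the extra terms $\sum_{m\notin\{i_1,\dots,i_k\}}\E[\tau_{i_j,m}(X)\,g(X^{\mathcal V})\,\partial_m\chi_n(X^{\mathcal V^\perp})]$ vanish in the limit; this is exactly where the $p$-integrability of $\tau_{i_j}$ and the class requirement $\tau_{i_j}\in\mathcal F(p)$ are actually used, and it is the substantive step, not a ``purely technical'' afterthought.

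By contrast, the paper works pointwise with the PDE \eqref{eq:65}: it writes $\tau^{\mathcal V}_{i_j,\ell}(x^{\mathcal V})p_{\mathcal V}(x^{\mathcal V})=\int_{\mathcal V^\perp}\tau_{i_j,i_\ell}(x)p(x)\,dx^{\mathcal V^\perp}$, differentiates under the integral (justified by integrability), applies the full divergence identity $\sum_m\partial_m(\tau_{i_j,m}p)=-x_{i_j}p$, and then kills the cross-terms $\int_{\mathcal V^\perp}\partial_m(\tau_{i_j,m}p)\,dx^{\mathcal V^\perp}$ for $m\notin\{i_1,\dots,i_k\}$ directly from the definition of $\mathcal F(p)$. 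Your conditioning argument is conceptually cleaner, but the paper's pointwise computation makes explicit precisely the step you leave implicit; once you insert the cutoff argument, the two proofs use the same key ingredient at the same place.
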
 
\begin{proof}
  Without loss of generality we suppose that $p$ is centered.  Fix
  $x^{\mathcal{V}}\in \mathcal{V}$. Then 
\begin{align*}
 &  \sum_{\ell=1}^k \frac{\partial}{\partial_{i_\ell}}  \left( \tau^{\mathcal{V}}_{i_j,\ell}
  (x^{\mathcal{V}}) p_{\mathcal{V}}(x^{\mathcal{V}}) \right) \\
& =
 \sum_{\ell=1}^k  \frac{\partial}{\partial_{i_\ell}} \left(  \int_{\mathcal{V}^{\perp}}
  \tau_{i_j, i_{\ell}}(x^{\mathcal{V}}, x^{\mathcal{V}^{\perp}})
p(x^{\mathcal{V}},
  x^{\mathcal{V}^{\perp}}) d  x^{\mathcal{V}^{\perp}} \right)\\
& =
  \sum_{\ell=1}^k \int_{\mathcal{V}^{\perp}} \frac{\partial}{\partial_{i_\ell}}  \left( 
  \tau_{i_j, i_{\ell}}(x^{\mathcal{V}}, x^{\mathcal{V}^{\perp}})
p(x^{\mathcal{V}},
  x^{\mathcal{V}^{\perp}})  \right)d  x^{\mathcal{V}^{\perp}} \\
  & = -
x_{i_j}\int_{\mathcal{V}^{\perp}}p(x^{\mathcal{V}},
  x^{\mathcal{V}^{\perp}})   d  x^{\mathcal{V}^{\perp}}  + 
\sum_{m \in \left\{ 1, \ldots, n \right\} \setminus \left\{ i_1, \ldots,
                                                               i_k
                                                               \right\}}
  \int_{\mathcal{V}^{\perp}}  \frac{\partial}{\partial_m}  \left(  
  \tau_{i_j,m}(x^{\mathcal{V}}, x^{\mathcal{V}^{\perp}})
p(x^{\mathcal{V}},
  x^{\mathcal{V}^{\perp}})\right) d  x^{\mathcal{V}^{\perp}}
\end{align*}
where the second-last line is allowed thanks to integrability of the
Stein kernel and the last follows from \eqref{eq:65}, giving the
identity
\begin{align*}
 \sum_{m=1}^d \frac{\partial}{\partial_m}  \left( 
  \tau_{i_j,m}(x^{\mathcal{V}}, x^{\mathcal{V}^{\perp}})
p(x^{\mathcal{V}},
  x^{\mathcal{V}^{\perp}})\right)  = - x_{i_j} p(x^{\mathcal{V}},
  x^{\mathcal{V}^{\perp}})
\end{align*}
which is valid for any $ x^{\mathcal{V}^{\perp}}$ such that
$(x^{\mathcal{V}}, x^{\mathcal{V}^{\perp}})$ lies in the support of
$\Omega_p$.  Now using the requirement $\tau_{i_j} \in \mathcal{F}(p)$
ensures that
$ \int_{\mathcal{V}^{\perp}} \frac{\partial}{\partial_m} \left(
  \tau_{i_j,m}(x^{\mathcal{V}}, x^{\mathcal{V}^{\perp}})
  p(x^{\mathcal{V}}, x^{\mathcal{V}^{\perp}})\right) d
x^{\mathcal{V}^{\perp}} = 0 $ for all
$m \notin \left\{ i_1, \ldots, i_k \right\}$ so that
$$\sum_{\ell=1}^k \frac{\partial}{\partial_{i_\ell}}  \left( \tau^{\mathcal{V}}_{i_j,\ell}
  (x^{\mathcal{V}}) p_{\mathcal{V}}(x^{\mathcal{V}}) \right) = -
x_{i_j} p_{\mathcal{V}}(x),$$ as required. It remains to check that
$\tau^{\mathcal{V}}_{i_j,\ell} (x^{\mathcal{V}})\in
\mathcal{F}(p_{\mathcal{V}})$, but this is a direct consequence of the
definitions.
\end{proof}

Next, we provide
formulas for computing Stein kernels explicitly based on univariate
Stein kernels.
\begin{prop}[Bivariate Stein kernels]\label{prop:transport-definition-1}
  Let $X = (X_1, X_2)^T\sim p$ with $p$ a continuous pdf on $\R^2$
  satisfying Assumption A. Let $p_1$ be the marginal of $p$ in
  direction $e_1$, $\rho_1$ the corresponding univariate score and
  $\tau_1$ the corresponding univariate kernel (which we suppose to
  exist). Set $\tau_{11}(x_1, x_2) = \tau_1 (x_1),$ and
\begin{align}
\label{eq:54}
  \tau_{12}(x_1, x_2) & =\tau_1(x_1) \frac{p_1(x_1)}{p(x_1, x_2)} \partial_{1}
   \left(  \int_{x_2}^{\infty}p(x_1, v)dv/p_1(x_1) \right) .
 \end{align}
 Then the vector
 $(x_1, x_2) \mapsto \left( \tau_1(x_1, x_2), \tau_{12}(x_1, x_2)
 \right)_{1 \le i, j\le 2} $ is a Stein kernel for $p$ in the
 direction $e_1$.
\end{prop}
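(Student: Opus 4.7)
The plan is to verify directly the defining divergence identity
\[
\partial_1(\tau_{11}\, p) + \partial_2(\tau_{12}\, p) = (\nu_1 - x_1)\, p
\]
on $\Omega_p$, which by Definition~\ref{def:multiteikdef} is all that is required (once we also check membership of $(\tau_{11}, \tau_{12})$ in $\mathcal{F}(p)$). The key observation is to introduce the conditional survival function $q(x_1, x_2) := \int_{x_2}^{\infty} p(x_1, v)\, dv / p_1(x_1)$, which satisfies $\partial_2 q = -p/p_1$ by the fundamental theorem of calculus, and which by the very definition of $\tau_{12}$ allows us to write compactly
\[
\tau_{12}(x_1,x_2)\, p(x_1,x_2) \;=\; \tau_1(x_1)\, p_1(x_1)\, \partial_1 q(x_1,x_2).
\]

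First I would compute the easy term by the ordinary product rule,
\[
\partial_1(\tau_{11}\, p) \;=\; \tau_1'(x_1)\, p + \tau_1(x_1)\, \partial_1 p.
\]
Next, applying Schwarz's theorem to swap the mixed partials of $q$ and using $\partial_2 q = -p/p_1$, I would obtain
\[
\partial_2(\tau_{12}\, p) \;=\; \tau_1(x_1)\, p_1(x_1)\, \partial_1 \partial_2 q \;=\; \tau_1(x_1)\, p_1(x_1)\, \partial_1\!\left(-\tfrac{p}{p_1}\right) \;=\; -\tau_1\, \partial_1 p \,+\, \tau_1\, p\, \tfrac{p_1'}{p_1}.
\]
Summing the two expressions and noting the cancellation of the $\tau_1\, \partial_1 p$ terms produces
\[
\partial_1(\tau_{11}\, p) + \partial_2(\tau_{12}\, p) \;=\; \tfrac{p}{p_1}\bigl(\tau_1'(x_1)\, p_1(x_1) + \tau_1(x_1)\, p_1'(x_1)\bigr) \;=\; \tfrac{p}{p_1}\, (\tau_1 p_1)'(x_1).
\]
The univariate Stein kernel identity $(\tau_1 p_1)' = (\nu_1 - x_1)\, p_1$ (which is the one-dimensional version of \eqref{eq:65} for $p_1$) then immediately yields the desired equality.

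The main obstacle is not conceptual but technical: one must justify the interchange of $\partial_1$ with the integral defining $\int_{x_2}^{\infty} p(x_1, v)\, dv$, and the validity of Schwarz's theorem for $q$, both of which require sufficient regularity and integrability of $\partial_1 p$ beyond Assumption~A. These are natural smoothness hypotheses implicit in the statement (since continuity of $p$ is assumed). Finally, to confirm $(\tau_{11}, \tau_{12}) \in \mathcal{F}(p)$, the component $\tau_{11}(x_1,x_2) = \tau_1(x_1)$ inherits the property from $\tau_1 \in \mathcal{F}_1(p_1)$ by Fubini, while for $\tau_{12}$ it reduces to checking that $\tau_1(x_1)\, p_1(x_1)\, \partial_1 q(x_1,x_2)$ has vanishing boundary contributions in $x_2$ -- a mild integrability requirement that holds whenever the one-dimensional kernel is well-behaved and $\partial_1 p$ is dominated appropriately.
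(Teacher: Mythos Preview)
Your proof is correct and follows essentially the same approach as the paper's own: both verify the divergence identity $\partial_1(\tau_{11}p)+\partial_2(\tau_{12}p)=(\nu_1-x_1)p$ by computing each term and observing that the cross terms $\tau_1\,\partial_1 p$ and $\tau_1\,p\,p_1'/p_1$ cancel appropriately, leaving only the univariate kernel identity $(\tau_1 p_1)'=(\nu_1-x_1)p_1$. Your packaging via the conditional survival function $q$ and an explicit appeal to Schwarz's theorem is slightly more compact than the paper's, which instead expands $\tau_{12}\,p$ using the alternative form \eqref{eq:67} and differentiates under the integral sign directly; but the computations, the key cancellation, and the technical regularity requirements are identical.
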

Note that 
\begin{align}
\lefteqn{\tau_1(x_1) \frac{p_1(x_1)}{p(x_1, x_2 )} \partial_{1}
   \left(  \int_{x_2}^{\infty}p(x_1, v)dv/p_1(x_1) \right) } \nonumber \\  &= \tau_1(x_1) \frac{p_1(x_1)}{p(x_1, x_2)} \left( -\frac{p_1'(x_1)}{(p_1(x_1))^2}
     \int_{x_2}^{{\infty}} p(x_1, v) dv  + \frac{1}{p_1(x_1)}
     \int_{x_2}^{{\infty}}\partial_1 p(x_1,v) dv \right)\nonumber \\
  \label{eq:67}
& = \frac{1}{p(x_1, x_2)}\tau_1(x_1)
     \int_{x_2}^{{\infty}} \big(  -\rho_1(x_1)p(x_1, v)  + 
\partial_1 p(x_1,v)   \big)dv.
 \end{align}
 This alternative form of \eqref{eq:54} is often more convenient than  \eqref{eq:54}.

\begin{proof} We need to prove that 
  \begin{align}
   & \sum_{j=1}^2 \partial_j(\tau_{1j}(x) p(x)) = -(x_1
     -\nu_1)p(x) \label{eq:59}  
\end{align}
for all $x  = (x_1, x_2)^T\in \R^2$.
Applying \eqref{eq:67} we have 
  \begin{align}
&    \tau_{12}(x)  p(x_1, x_2)  = p_1(x_{1})\tau_1(x_1)\partial_1
    \left( \int_{x_2}^{{\infty}}p(x_1, v) dv/ p_1(x_1) \right)
  \nonumber \\
 & \qquad = -\tau_1(x_1)\frac{p_1'(x_1)}{p_1(x_1)}
     \int_{x_2}^{{\infty}} p(x_1, v) dv  + \tau_1(x_1)
     \int_{x_2}^{{\infty}}\partial_1 p(x_1,v) dv \label{eq:52}
  \end{align}
so that 
\begin{align}\label{eq:57}
  \partial_2 \left(\tau_{12}(x)p(x_1, x_2)) \right) & = 
                                               \tau_1(x_1)\frac{p_1'(x_1)}{p_1(x_1)} 
                                               p(x_1, x_2) -
                                               \tau_1(x_1) \partial_1p(x_1, x_2).  
\end{align}
 Also,  as $\partial_1(p_1(x_1)\tau_1(x_1))   = (\nu_1 - x_1) p_1(x_1)$,
 \begin{align}
&  \partial_1(\tau_{1}(x) p(x_1, x_2))
  =  \partial_1\left(p_1(x_1)\tau_1(x_1) \frac{p(x_1, x_2)}{p_1(x_1)}\right)
  \nonumber\\ 
   &\qquad  = \partial_1(p_1(x_1)\tau_1(x_1))  \frac{p(x_1, x_2)}{p_1(x_1)} +
     p_1(x_1)\tau_1(x_1) \partial_1 \left(  \frac{p(x_1, x_2)}{p_1(x_1)} \right)\nonumber\\
& \qquad=  -(x_1-\nu_1) p(x_1, x_2) +  p_1(x_1)\tau_1(x_1) \left(
  -\frac{p_1'(x_1)}{(p_1(x_1))^2} p(x_1, x_2)+
  \frac{\partial_1p(x_1, x_2)}{p_1(x_1)} \right)\nonumber\\
& \qquad = -(x_1-\nu_1) p(x_1, x_2)  - \tau_1(x_1) \frac{p_1'(x_1)}{p_1(x_1)}
  p(x_1, x_2) + \tau_1(x_1) \partial_1p(x_1, x_2). 
  \label{eq:58}
 \end{align}
 Adding up \eqref{eq:57} and \eqref{eq:58} we get \eqref{eq:59}.
\end{proof}
\begin{rem}
  The proof of Proposition \ref{prop:transport-definition-1} is of a
  purely computational nature. The inspiration for formula
  \eqref{eq:54} is \cite[equation (9)]{BaBaNa03}, where a similar
  quantity is introduced via a transport argument. To see the
  connection, 
note that   \eqref{eq:67} gives
  \begin{align}
    \label{eq:55}
    \tau_{12}(x_1, x_2) p(x_1, x_2)= \tau_1(x_1)
    \int_{-\infty}^{x_2}\left( \rho_1(x_1) p(x_1, v)
    - \partial_1p(x_1, v)  \right)dv.
  \end{align}
  We introduce $ p^{X \, | \, X_i=x_i}(x_1, x_2) = {p(x_1, x_2)}/{p_i(x_i)}$
  the conditional density of $X$ at $X_i=x_i$. Fix $i=1$ and, for each
  $t, t', x_2$ let $x_2 \mapsto T_{t, t'}(x_2)$ be the mapping
  transporting the conditional density at $x_1=t$ to that at $x_1=t'$,
  implicitly defined via 
\begin{align}
  \label{eq:56}
  p^{X \, | \, X_1=t}(x_2)  & = p^{X \, | \,
                                          X_1=t'}\big(T_{t,
                                          t'}(x_2)\big)
                              \partial_{x_2} T_{t, t'}({x_2}	).  
\end{align}
Taking derivatives in \eqref{eq:56} with respect to $t'$ and setting
$t'=t=x_1$ we deduce (using the fact that $T_{t, t}({x_2}	) = {x_2}$) that 
\begin{align*} 
  0 & = \frac{\partial_1 p(x_1, {x_2}	)}{p_1(x_1)} + \frac{\partial_{2}	p(x_1,
    {x_2}	)}{p_1(x_1)}\partial_{t'} T_{t, t'}({x_2}	)
      \big|_{t'=t=x_1} -
  \frac{p(x_1,{x_2}	)}{p_1(x_1)}\frac{p_1'(x_1)}{p_1(x_1)}\\
& \qquad  + 
  \frac{p(x_1,{x_2}	)}{p_1(x_1)} \partial_{{2}	}(\partial_{t'} T_{t, t'}({x_2}	))
  \big|_{t'=t=x_1},
\end{align*}
that is, 
\begin{align*}
  \partial_2 \left( p(x_1, x_2) \partial_{t'} T_{t, t'}({x_2}
  )\big|_{t'=t=x_1}\right) = p(x_1, x_2)\frac{p_1'(x_1)}{p(x_1)}
  - \partial_1 p(x_1, x_2) 
\end{align*}
and we recognize from  \eqref{eq:55} that, up to a
    function which depends only on $x_1$,  
\begin{align}
  \label{eq:68}
\frac{\tau_{12}(x_1, {x_2}	)}{\tau_1(x_1)} = \partial_{t'} T_{t, t'}({x_2}	)
  \big|_{t'=t=x_1}.
\end{align}
This is not the only Stein kernel in connection with transport maps,
see \cite{F2018} where yet another construction is introduced.
 \end{rem}

 Further, inspired by \cite{artstein2004solution,ABBNptrf}, we can
 directly postulate our next result which guarantees existence of
 Stein kernels under smoothness conditions. 

\begin{thm}\label{th:artstein} 
Let $p : \R^d \to (0,  \infty)$ be a 
  continuously twice differentiable density on $\R^d$ with 
  \begin{equation*}
    \int \frac{\| \nabla p \|^2}{p}, \quad \int \| \nabla^2(p) \|
    < \infty. 
  \end{equation*}
  Let $\tau_i^{(1)}, i = 1, \ldots, d$ be the marginal Stein
  kernels. Then, for any direction   $e_i, i=1, \ldots, d$ there
exists a Stein kernel for $p$ in direction $e_i$ 
\begin{equation*}
  \tau_{p,i}^{(d)}(x) =\tau_i^{(1)}(x_i)
  \begin{pmatrix}
    \tau_{i,1}^{(d)}(x \, | \, x_i) & \cdots & \tau_{i,i-1}^{(d)}(x \,
    | \, x_i) & 1 & \tau_{i,i+1}^{(d)}(x \,
    | \, x_i) & \cdots  & 
    \tau_{i,d}^{(d)}(x \, | \, x_i) 
  \end{pmatrix}
\end{equation*}
with coefficients $\tau_{i\bullet}, i=1, \ldots, d$ solving the
equations
  \begin{equation}\label{eq:60}
    \mathcal{T}_{\mathrm{div}, p}(\tau_{i\bullet}(x \, | \, x_i)) = \rho_i(x_i).
  \end{equation}
Here $\rho_i(x_i) = p_i'(x_i)/p_i(x_i)$  is the score function of the
$i$th marginal and $x=(x_1, \ldots, x_d)$. 
\end{thm}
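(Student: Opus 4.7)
The plan is to split the argument into an algebraic verification assuming the auxiliary ingredients exist, and then a construction of those ingredients. For the algebraic step, I would write $\tau_{p,i}^{(d)}(x) = \tau_i^{(1)}(x_i)\,\tau_{i\bullet}(x|x_i)$, where $\tau_{i\bullet}$ is the $d$-vector whose $i$-th coordinate is $1$ and whose other coordinates are $\tau_{i,j}^{(d)}(x|x_i)$. Applying the divergence product rule \eqref{productrule} with $\phi = \tau_i^{(1)}(x_i)$ and $\mathbf{v} = p\,\tau_{i\bullet}$ yields
\begin{equation*}
\mathrm{div}\big(p\,\tau_{p,i}^{(d)}\big) = \tau_i^{(1)}(x_i)\,\mathrm{div}\big(p\,\tau_{i\bullet}\big) + \big\langle \nabla\tau_i^{(1)}(x_i),\ p\,\tau_{i\bullet}\big\rangle.
\end{equation*}
Since $\tau_i^{(1)}$ depends only on $x_i$, its gradient is $(\tau_i^{(1)})'(x_i)\,e_i$, and the constraint $(\tau_{i\bullet})_i = 1$ collapses the inner product to $p\,(\tau_i^{(1)})'(x_i)$. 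Invoking the defining equation \eqref{eq:60} for $\tau_{i\bullet}$ and the univariate Stein-kernel ODE $(\tau_i^{(1)})'(x_i) + \tau_i^{(1)}(x_i)\rho_i(x_i) = \nu_i - x_i$ (which rearranges $\partial_{x_i}(\tau_i^{(1)} p_i) = (\nu_i - x_i) p_i$ from \eqref{steinkernel1d}), I conclude $\mathcal{T}_{\mathrm{div},p}(\tau_{p,i}^{(d)}) = \nu_i - x_i$.

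Next I would supply the ingredients. The marginal kernel $\tau_i^{(1)}$ is given explicitly by \eqref{steinkernel1d}, and the finite Fisher-information assumption $\int \|\nabla p\|^2/p < \infty$ forces $p_i$ to have enough regularity and finite second moment for this formula to make sense. For $\tau_{i\bullet}(\cdot|x_i)$, decomposing $p(x) = p_i(x_i)\,p(x_{\setminus i}|x_i)$ reduces \eqref{eq:60} to the weighted divergence equation
\begin{equation*}
\mathrm{div}_{x_{\setminus i}}\!\big(p(x_{\setminus i}|x_i)\,\widetilde V\big) = -\partial_i p(x_{\setminus i}|x_i)
\end{equation*}
for the reduced vector field $\widetilde V = \big((\tau_{i\bullet})_j\big)_{j\neq i}$ on $\R^{d-1}$, for each fixed $x_i$. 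The compatibility condition that the right-hand side integrates to zero follows by differentiating the normalisation $\int p(x_{\setminus i}|x_i)\,dx_{\setminus i} = 1$ under the integral, which is legitimate thanks to $\int \|\nabla^2 p\| < \infty$.

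The main obstacle is producing a solution $\widetilde V$ that places $\tau_{i\bullet}$, and hence $\tau_{p,i}^{(d)}$, inside the Stein class $\mathcal{F}(p)$. This is exactly the problem solved in the Artstein--Ball--Barthe--Naor programme \cite{artstein2004solution,ABBNptrf}: on $\R^{d-1}$ I would solve the weighted Poisson equation $\mathrm{div}_{x_{\setminus i}}\big(p(\cdot|x_i)\,\nabla\psi\big) = -\partial_i p(\cdot|x_i)$ for a potential $\psi(\,\cdot\,;x_i)$ and set $\widetilde V = \nabla\psi/p(\cdot|x_i)$. The Fisher-information bound $\int \|\nabla p\|^2/p <\infty$ is precisely what controls the $L^2(p)$-norm of $\widetilde V$ (this is the content of the ABBN variational lemma), while $\int \|\nabla^2 p\| < \infty$ ensures that $\widetilde V$ depends on $x_i$ smoothly enough for the cross derivatives appearing in $\mathrm{div}(p\,\tau_{i\bullet})$ to be integrable, so the classical integration by parts putting $\tau_{p,i}^{(d)}$ in $\mathcal{F}(p)$ is justified. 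This construction also clarifies the connection, alluded to after Proposition \ref{prop:transport-definition-1}, between Stein kernels and the transport-type objects of \cite{artstein2004solution,ABBNptrf}.
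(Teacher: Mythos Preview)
Your proposal is correct and follows essentially the same route as the paper. The algebraic verification---writing $\tau_{p,i}^{(d)}=\tau_i^{(1)}(x_i)\,\tau_{i\bullet}$, applying the product rule, using \eqref{eq:60} and the univariate Stein-kernel ODE---matches the paper's computation line by line; the only difference is that the paper cites \cite[Theorem 4]{artstein2004solution} directly for the existence of the vector fields solving \eqref{eq:60}, whereas you sketch the ABBN construction yourself. One small slip: if you solve the weighted Poisson equation $\mathrm{div}_{x_{\setminus i}}\big(p(\cdot|x_i)\nabla\psi\big)=-\partial_i p(\cdot|x_i)$, then you should set $\widetilde V=\nabla\psi$, not $\widetilde V=\nabla\psi/p(\cdot|x_i)$.
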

\begin{proof}
  The result is almost immediate from \cite[Theorem
  4]{artstein2004solution} where it is proved (see middle of page 978)
  that, under the stated conditions, there exist continuously
  differentiable vector fields $R_h$ such that 
  \begin{align*}
    \frac{\mathrm{div}(R_h(x)p(x))}{p(x)} = \frac{h'(u)}{h(u)}
  \end{align*}
  for any marginal $u \mapsto h(u)$ of $p$, in any
  direction. Collecting these into a single vector and adapting the
  notations leads to \eqref{eq:60}.  To see the connection with Stein
  kernels, write
  \begin{equation}
    \label{eq:61}
    \tau_{ij}(x) = \tau_i(x_i) \tau_{ij}(x \, | \, x_i).  
  \end{equation}
Then 
\begin{align*}
  \sum_{j=1}^d \partial_j(\tau_{ij}(x) p(x)) & =
  \sum_{j=1}^d \partial_j(\tau_{ij}(x \, | \, x_i)p(x) \tau_i(x_i)) \\
  & =   \sum_{j=1}^d \partial_j(\tau_{ij}(x \, | \, x_i)p(x))   \tau_i(x_i) + 
    \sum_{j=1}^d \tau_{ij}(x \, | \, x_i)p(x) \partial_j(\tau_i(x_i)) \\
  & =    \rho_i(x_i)  p(x) \tau_i(x_i)+
p(x) \partial_i(\tau_i(x_i))
\end{align*}
where in the last line we use \eqref{eq:60} in the first sum and
$\partial_j(\tau_i(x_i)) = 0$ for all $j \neq i$ in the second sum.
Clearly by the definition of the univariate Stein kernel
\begin{align*}
   \partial_i(\tau_i(x_i)) = - \rho_i(x_i) \tau_i(x_{i})  +\E[X_i] - x_i
\end{align*}
and the claim follows. 
\end{proof}

\begin{rem}
Theorem \ref{th:artstein} gives a mechanism allowing to generalize the
bivariate construction from Proposition
\ref{prop:transport-definition-1}. Under the conditions of Theorem
\ref{th:artstein}, for $d=3$, we set 
  $p_{ij}(x_i, x_j) = \int_{-\infty}^{\infty} p(x) dx_k$ and $P_i(x) =
  \int_{-\infty}^xp_i(v)dv, i= 1, 2, 3$. Then we can
  choose $\tau_{i,i} = \tau_i$ and 
\begin{align*}
&  \tau_{1,2}^{(3)}(x \, | \, x_1)p(x_1, x_2, x_3) = 
   \int_{-\infty}^{x_2} \left(\rho_1(x_1) p(x_1, v, x_3) - \partial_1
  p(x_1, v, x_3) \right) dv   \\
& \qquad - P_2(x_2)
  \big\{ \rho_1(x_1) p_{13}(x_1, x_3) - \partial_1 p_{13}(x_1, x_3) \big\}\\
&  \tau_{1,3}^{(3)}(x \, | \, x_1)p(x_1, x_2, x_3)   = 
 p_2(x_2) \int_{-\infty}^{x_3} \left( \rho_1(x_1) p_{13}(x_1, w)
  -  \partial_{1}p_{13}(x_1, w) \right) dw
\end{align*}
  and similarly for $\tau_{i,j}^{(3)}(x \, | \, x_i)$ for all $i, j$.
 Direct computations suffice for this claim.
    Also, setting 
  \begin{equation}
   \label{eq:62}
 \tau_{12}^{(2)}(x_1, x_2) =   \tau_1^{(1)}(x_i)\E[\tau_{12}^{(3)}(x \, | \, x_1)
 \, | \, X_1=x_1 , X_2=x_2],
  \end{equation}
  the vector $(\tau_1^{(1)}(x_1), \tau_{12}^{(2)}(x_1, x_2))$ forms a
  bivariate Stein kernel for $(X_1, X_2)$.
%
 Moreover,
  \begin{align*}
    \int_{-\infty}^{+\infty} \tau_{1,2}^{(3)}(x \, | \, x_1)\frac{p(x_1, x_2,
    x_3) }{p(x_1, x_2)}dx_3 &  = \frac{1}{p(x_1, x_2)} 
\int_{-\infty}^{x_2} \left(\rho_1(x_1) p(x_1, v) - \partial_1
  p(x_1, v) \right) dv   \\
& \qquad - P_2(x_2)
  \big\{ \rho_1(x_1) p_{1}(x_1) - \partial_1 p_{1}(x_1) \big\}\\
    & = \frac{1}{p(x_1, x_2)} 
\int_{-\infty}^{x_2} \left(\rho_1(x_1) p(x_1, v) - \partial_1
  p(x_1, v) \right) dv
  \end{align*}
  which is equivalent to the expression \eqref{eq:67}.

The $k$-variate extension can also be constructed, as follows.
For all $k \ge 1$, and under the same conditions, for all $1 \le j \le d-1$ we can define  
\begin{align}
  \nonumber
  \tau_{1, j}^{(k)}(x \, | \, x_1)p(x)&  = \int_{-\infty}^{x_j} \left(
    \rho_1(x_1) p(x_1, x_j=v, \ldots, x_d) - \partial_1p(x_1, x_j=v,
    \ldots, x_d) \right) dv \\
  & \qquad - P_j(x_j) \left(  \rho_1(x_1) p(x_1,
   x_{j+1} \ldots, x_d) - \partial_1p(x_1, x_{j+1}, 
    \ldots, x_d)  \right)  \label{eq:63}
\end{align}
and for $j=d$
\begin{align}
  \label{eq:64}
  \tau_{1, d}^{(k)}(x \, | \, x_1)p(x)  =  P_{d-1}(x_{d_1})  \int_{-\infty}^{x_d}\left(  \rho_1(x_1) p(x_1, x_d=v) - \partial_1p(x_1,  x_d = v)  \right) dv
\end{align}
\end{rem} 
  
\begin{example}[Multivariate Gaussian]
  If $X \sim \mathcal{N}_2(\nu, \Sigma)$ is multivariate
  $d$-dimensional Gaussian then direct computations of the kernel as
  provided by Proposition \ref{prop:transport-definition-1} leads to
  $\pmb \tau_2$ given in \eqref{eq:steikstu2}. The expression is more
  complicated in dimension $d \ge 3$, and so far we have not been able
  to give a probabilistic interpretation of it.
\end{example}
\begin{example}
  If $X = (X_1, X_2)^T \sim t_k(\nu, \Sigma)$ follows the bivariate
  Student distribution then direct computations of the kernel as
  provided by Proposition \ref{prop:transport-definition-1} leads to
  $\pmb \tau_2$ given in \eqref{eq:steikstu2}. Again, we have not been able
  to give a probabilistic interpretation of the expression in
  dimension $d \ge 3$.
\end{example}

\section{Stein discrepancies} 
\label{sec:stein-discrepancies}

{Instead of using the Wasserstein metric which uses Lipschitz functions, more general classes functions $\mathcal{G}$ 
in $\sup_{g \in \mathcal{G}}\left| E[ \mathcal{A}_pg(Y)] \right|$ can be useful to assess distributional distances, leading to the notion of {\it Stein discrepancies}.}

\subsection{Integral probability metrics and Stein discrepancies}
\label{sec:integr-prob-metr}

{Differences between distributions can be measured using
  probability metrics. For applying Stein's method, so-called
  {\it{integral probability metrics}} are well suited.}

\begin{definition}[Integral Probability Metrics] \label{def:ipms} Let
  $\mathbb{F}(\R^d)$ be a collection of cumulative distribution
  functions on $\R^d$ and denote $L^1(\mathbb{F}(\R^d))$ the class of
  Borel measurable functions $h: \mathbb{R}^{d}\to \mathbb{R}$ (for
  some $q \ge 1$) such that $F(|h|)=\int|h|dF<\infty$ for all
  $F \in \mathbb{F}(\R^d)$. A metric on $\mathbb{F}(\R^d)$ is an
  \emph{integral probability metric} {(IPM)} if it can be written in the form
\begin{equation}\label{eq:IPM}
   d_{\mathcal{H}}(F, G) :=    \sup_{h \in \mathcal{H}} \left| F(h)-  G(h) \right|
\end{equation}
for some class of real-valued bounded measurable test functions
$\mathcal{H} \subset L^1(\mathbb{F}(\R^d))$ ($\left| \cdot \right|$ is
the Euclidean norm).  The expression on the right-hand side of
\eqref{eq:IPM} is called an {\emph{IPM-discrepancy}}. 
\end{definition}

\noindent Many important probability metrics can be represented as
integral probability metrics; classical references are
\cite{zolotarev1983probability,GS02}.  The Wasserstein distance
between $X$ and $Y$, which we have already used in this paper, takes
$\mathcal{H}= \mathcal{W} $ the collection of Lipschitz
functions $h : \R^d \to \R$ with Lipschitz constant smaller than
1. The Kolmogorov distance between two random vectors $X \sim F$ and
$Y \sim G$ is
$ \mathrm{Kol}(X, Y) = d_{\mathcal{H}_{\mathrm{Kol}}}(F, G) $ with
$\mathcal{H}_{\mathrm{Kol}} = \left\{ \mathbb{I}_{(-\infty, z]}, z \in
  \R^d \right\}$. The Wasserstein distance between $X$ and $Y$ takes
$\mathcal{H}=  \mathcal{W} $ the collection of Lipschitz
functions $h : \R^d \to \R$ with Lipschitz constant smaller than 1.
The total variation distance takes $\mathcal{H}_{\mathrm{TV}}$ the
collection of Borel measurable functions $h : \R^d\to[0,1]$. For other
examples and references see \cite[Appendix E]{NP11}.

{In Definition \ref{def:ipms} the Euclidean norm $ | \cdot | $ is
  used, but the definition generalises easily to other norms
  $\| \cdot \|$ as long as
\begin{equation}\label{genipm} 
   d_{\mathcal{H}}(F, G) :=    \sup_{h \in \mathcal{H}} \left\|
       F(h)- G(h) \right\|
\end{equation}
defines a distance between probability distributions. 
}
This intuition leads to the following general definition.

\begin{definition}[Stein discrepancy] \label{def:steidisck}
  Let $p$ be a density on $\R^d$  and  $\mathcal{A}_p$  a Stein operator
  acting on some class 
  $\mathrm{dom}(\mathcal{A}_p)$. Then for any random $Y\sim q$, any
  $\mathcal{G}\subset \mathrm{dom}(\mathcal{A}_p)$ and any 
  norm $\left\| \cdot \right\|$ ,
  the quantity
\begin{align}
  \label{eq:stediscrepancy}
  \mathcal{S}_{\left\| \cdot \right\|}(q, \mathcal{A}_p, \mathcal{G}) = \sup_{g \in
    \mathcal{G}} \left\| \E \left[ \mathcal{A}_pg(Y) \right] \right\|
\end{align}
is the ($\left\| \cdot \right\|-\mathcal{G}-\mathcal{A}_p$) \emph{Stein discrepancy} from $Y$
to $X$.
\end{definition}

\noindent Definition~\ref{def:steidisck} {is motivated by} the
reference \cite{gorham2015measuring} where, to the best of our
knowledge, such a unified notation for general Stein-based
discrepancies (with freedom of choice both in the operator and the
class of functions) is first introduced.

The choice of norm $\left\| \cdot \right\|$ is generally fixed by
context such as dimensionality, basic properties of the operator and the
random variables $X, Y$ under study. In the sequel we will
generally drop the indexation in the norm and simply write
$\mathcal{S}(Y, \mathcal{A}_p, \mathcal{G})$ instead. 
The
 next subsection links Stein discrepancies to information metrics.

\subsection{Information  metrics and kernelized Stein discrepancies }
\label{sec:kern-stein-discr}

In principle, the Stein discrepancy \eqref{def:steidisck} can be used
as a basis for goodness-of-fit tests, and could be estimated
numerically. In high-dimensional problems, the class $\mathcal{G}$ is
often too large to allow numerical integration.  In high-dimensional
goodness of fit tests, restricting the class of functions to a ball in
a reproducing kernel Hilbert space associated with a positive definite
kernel $k(x,x')$ has been shown in
\cite{chwialkowski2016kernel,liu2016kernelized} to be an efficient way
of estimating Stein discrepancies. In this context these discrepancies
are called {\emph{kernelized Stein discrepancies}}, with the kernel
$k(x,x')$ in mind. The framework of the present paper shows how to
generalise their approach, as follows.

Let $X\sim p$ and $Y \sim q$ be two random variables on $\R^d$ with
differentiable densities and respective Stein classes $\mathcal{F}(p)$
and $\mathcal{F}(q)$. Suppose, for simplicity, that both share the
same mean and the same support $\mathcal{S}$, satisfying Assumption A.
Fix $d\times d$ matrix valued functions
$\mathbf{A}_p \in \mathcal{F}(p)$ and
$\mathbf{A}_q \in \mathcal{F}(q)$, set
$\mathbf{a}_p = \mathcal{T}_{\mathrm{div}, p}(\mathbf{A}_p)$,
$\mathbf{a}_q = \mathcal{T}_{\mathrm{div}, q}(\mathbf{A}_q)$ and
introduce the divergence based vector valued standardizations
\begin{align} \label{Ap} 
&   \mathcal{A}_pg(x) =  \mathbf{A}_p(x) \nabla g(x)
  + 
   \mathbf{a}_p(x)  g(x), \quad g:\R^d\to \R \in \mathcal{F}(\mathcal{A}_p)
\\ \label{Aq} 
&   \mathcal{A}_qg(x) =  \mathbf{A}_q(x) \nabla g(x)
  + 
\mathbf{a}_q(x)  g(x), \quad g:\R^d\to \R \in \mathcal{F}(\mathcal{A}_q)
\end{align}
as in \eqref{eq:37}.

The Stein heuristic that if $p$ and $q$ are close, then
    $\E_{q \otimes q} [ \mathcal{A}_{p \otimes p} k(Y,Y')] $ should be
    small still holds, where $\mathcal{A}_{p \otimes p}$ is the
    concatenated operator $\mathcal{A}_p$ operating on functions
    $g: \R^{2d} \rightarrow \R$ by treating the first $d$ and the last
    $d$ components independently. It turns out that iterating the
    operator is a more elegant way to obtain a kernelized
    discrepancy. Write $\mathcal{A}_p^T$ for the transpose of the
    operator $\mathcal{A}_p$ in the matrix transpose sense. Then for
    any positive definite symmetric kernel $k$ with marginals in
    $\mathcal{F}(p)$, using \eqref{Ap},
\begin{align}\label{it} 
\mathcal{A}_p^T \mathcal{A}_p k(x,x')
&=\mathbf{A}_p(x)^T \nabla_{x}^T  \mathbf{A}_p(x')   \nabla_{x'}  k(x,x')
  + 
 \mathbf{A}_p(x)^T  \nabla_{x}^T  \mathbf{a}_p(x') k(x,x') \nonumber \\
 & +
 \mathbf{a}_p(x)^T   \mathbf{A}_p(x')   \nabla_{x'}  k(x,x')
  + 
 \mathbf{a}_p(x)^T   \mathbf{a}_p(x') k(x,x')
.
\end{align}
By conditioning on $X$ it is easy to see that 
$$ \E_{p \otimes p} [ \mathcal{A}_p^T \mathcal{A}_p k(X,X') ] =0.$$ 
The operator \eqref{it} has been used in \cite{chwialkowski2016kernel}
for the particular choice $\mathbf{A}_p(x) = I_d$ for which
$\mathbf{a}_{p} = \rho_p $, the score operator.  In this case,
evaluating \eqref{it} does not require knowledge of the normalising
constant for the density $p$ and is hence particularly attractive for
applications in Bayesian inference.  Equation \eqref{it} motivates our
general definition of kernelized Stein discrepancies. We use the
convention that $\mathcal{L}_{i}, i = 1, 2$ denotes the operator
$\mathcal{L}$ applied with respect to the $i$th variable of the
function $k(\cdot, \cdot)$.
  
  \begin{definition}[Kernelized Stein discrepancies]\label{sec:inform-metr-kern}
  Let $\mathcal{A}_p$ (resp., $\mathcal{A}_p$) be a Stein operator for
  $p$ (resp., for $q$) with class $\mathcal{F}(p)$ (resp.,
  $\mathcal{F}(q)$). Let $k$ be some kernel
  $k : \R^d \times \R^d \to \R$ such that $y \mapsto k(y, \cdot)$ and
  $y' \mapsto k(\cdot, y')$ belong to
  $\mathcal{F}(\mathcal{A}_p)\cap \mathcal{F}(\mathcal{A}_p)$.  The
$k$-kernelized Stein discrepancy from  $p$ to $q$ is
\begin{equation}
  \label{eq:20}
  \mathcal{S}(p, q, k) =  \E \left[ \mathcal{A}_{p,1}^T\mathcal{A}_{p,2}k(Y, Y')
    \right].
\end{equation} 
When $X\sim p$ and $Y \sim q$, in abuse of notation we also write 
$\mathcal{S}(X,Y , k)  = \mathcal{S}(p, q, k).$
\end{definition}

Our set-up allows for a combination of the operators from \eqref{Ap}
and \eqref{Aq} which is sometimes more suitable for the problem at
hand.  All classes of functions are designed to ensure that, for all
$g \in \mathcal{G}=\mathcal{F}(\mathcal{A}_p)\cap
\mathcal{F}(\mathcal{A}_q)$, we have
\begin{align}
&\E \left[ \mathcal{A}_pg(Y) \right]  =\E \left[ \mathcal{A}_pg(Y)
\right] -\E \left[ \mathcal{A}_qg(Y) \right] \nonumber \\
&  =\E \left[  
    \left( \mathbf{A}_p(Y) - \mathbf{A}_q(Y) \right) \nabla g(Y)
   \right]+\E \left[
  (\mathbf{a}_p(Y) -
 \mathbf{a}_q(Y)) g(Y)
  \right]\nonumber  \\
  & \label{eq:leyswangeneee}
 =:\E \left[  
    \mathbf{A}_{p/q}(Y)\nabla g(Y)
  \right]+\E \left[
  \mathbf{a}_{p/q} (Y) g(Y)
  \right].
\end{align} 
Thus in particular we can take
\begin{align}\label{choice1} 
\mathbf{A}_{p/q} \mbox{ and } \mathbf{a} _{p/q} \mbox{ in  \eqref{Ap}} \\
-\mathbf{A}_{p/q} \mbox{ and } - \mathbf{a}_{p/q} \mbox{ in  \eqref{Aq}}.
\label{choice2}
\end{align} 
Two particular choices of input matrices $\mathbf{A}_p$ and
$\mathbf{A}_{q}$ stand out:
\begin{itemize}
\item $\mathbf{A}_p(x) = \pmb \tau_p(x)$ and $\mathbf{A}_q(x) = \pmb \tau_q(x)$ for
  which $ \mathbf{a}_{p/q}=0$ and \eqref{eq:leyswangeneee} becomes 
  \begin{equation}
    \label{eq:leyswanappsteik}
\E \left[ \mathcal{A}_pg(Y) \right]  =   \E \left[   
    (\pmb \tau_p(Y) - \pmb \tau_q(Y)) \nabla g(Y)
    \right]  =:E \left[   
    \pmb \tau_{p/q}(Y) \nabla g(Y)
\right] 
  \end{equation}
\item  $\mathbf{A}_p(x) =\mathbf{A}_q(x)= I_d$  for
  which 
$\mathbf{A}_{p/q}=0$ and $\mathbf{a}_{p/q} = \rho_p - \rho_q$  and  \eqref{eq:leyswangeneee} becomes 
  \begin{equation}
    \label{eq:15}
   \E \left[ \mathcal{A}_pg(Y) \right] =\E \left[ 
        (\rho_p(Y) - \rho_q(Y))
  g(Y) \right] =: \E \left[   \rho_{p/q}(Y) g(Y) \right].   
  \end{equation}
\end{itemize}

The Stein operators \eqref{Ap} and \eqref{Aq} can also be applied jointly to functions $k(x,x')$ with marginals in $\mathcal{G}$.  
\begin{align*}
 \mathcal{A}_{p,1}^T\mathcal{A}_{p,2}k(x,x') 
& =  \mathbf A_p^T (x)  \mathbf A_q(x') \nabla_x^T \nabla_{x'} k(x,x') + A_p^T (x) \mathbf{a}_q(x')  \nabla_{x}^T  k(x,x') \\
& +  \mathbf a_p(x)^T \nabla_{x'}  \mathbf A_q(x') k(x,x') +  \mathbf a_p(x)^T  \mathbf a_q(x') k(x,x').
\end{align*} 
In \cite{chwialkowski2016kernel}, such kernelized expressions are
studied for the score function choice \eqref{eq:15}.  Inspired by
\cite{chwialkowski2016kernel,liu2016kernelized} we give the following
result which follows immediately from \eqref{eq:leyswangeneee} with
the choices \eqref{choice1} and \eqref{choice2}. This result shows
that kernelized methods have a larger range of applicability than
usually assumed. At the same time, it illustrates the power of our
general set-up.

\begin{theorem}\label{theo:inform-metr-kern-1}
  Let $Y, Y'$ be \emph{independently} drawn from $q$ on the same space
  and consider functions $k : \R^d \times \R^d \to \R$ such that both
  $y \mapsto k(y, \cdot)$ and $y' \mapsto k(\cdot, y')$ belong to
  $\mathcal{F}(\mathcal{A}_p)\cap\mathcal{F}(\mathcal{A}_q)$.  Then
  \begin{align}
    &  \E    \left[ \mathcal{A}_{p,1}^T\mathcal{A}_{p,2}k(Y, Y')
      \right]\nonumber \\
    &  = \nonumber 
     \E \left[ \nabla_{y}^T \left( \mathbf{A}_{p/q}(Y)\mathbf{A}_{p/q}(Y') \right)
      \nabla_{y'} k(y,y') |_{y=Y, y'=Y'} 
      \right] \\
    & \qquad \nonumber 
      +\E \left[ \nabla_{y}^T \left(  \mathbf{A}_{p/q}(Y)
      \mathbf{a}_{p/q}(Y')k(y, Y')\right)|_{y = Y}  \right]
      +\E \left[ \mathbf{a}_{p/q}(Y)^T \nabla_{y'} \left(  \mathbf{A}_{p/q}(Y)
      k(y, Y') \right)|_{y = Y} \right] \\
    & \qquad \label{eq:genkernelinf}
+ \E \left[  \mathbf{a}_{p/q}(Y)^T k(Y, Y')\mathbf{a}_{p/q}(Y')\right]
  \end{align}
  with $\mathcal{L}_{i}, i = 1, 2$ the operator $\mathcal{L}$ applied
  with respect to the $i$th variable of the function
  $k(\cdot, \cdot)$.
\end{theorem}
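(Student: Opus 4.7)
My approach is to view the two-variable expression $\E[\mathcal{A}_{p,1}^T \mathcal{A}_{p,2} k(Y,Y')]$ as a tensorial iteration of the one-variable identity \eqref{eq:leyswangeneee}, exploiting the independence of $Y$ and $Y'$ in exactly the same way that \eqref{eq:leyswangeneee} itself is obtained, namely by adding and subtracting a ``zero'' Stein term for $q$ in each variable.

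\paragraph{Step 1 (subtraction in the inner variable).}
Fix $y$ and view $y' \mapsto k(y,y')$ as a scalar function belonging to $\mathcal F(\mathcal A_q)$. By the Stein identity for $\mathcal A_q$ under the law $q$, $\E_{Y'}[\mathcal{A}_{q,2} k(y,Y')] = 0$ for every $y$. Since $\mathcal{A}_{p,1}^T$ differentiates only with respect to the first argument, it commutes with the $Y'$-integration, so by Fubini and independence of $Y,Y'$,
\begin{equation*}
\E\bigl[\mathcal{A}_{p,1}^T \mathcal{A}_{q,2} k(Y,Y')\bigr]
= \E_Y\Bigl[\mathcal{A}_{p,1}^T\, \E_{Y'}[\mathcal{A}_{q,2} k(Y,Y')]\Bigr] = 0.
\end{equation*}
Consequently
\begin{equation*}
\E\bigl[\mathcal{A}_{p,1}^T \mathcal{A}_{p,2} k(Y,Y')\bigr]
= \E\bigl[\mathcal{A}_{p,1}^T (\mathcal{A}_{p,2}-\mathcal{A}_{q,2}) k(Y,Y')\bigr].
\end{equation*}

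\paragraph{Step 2 (subtraction in the outer variable).}
A symmetric argument, now conditioning on $Y'$ and applying the Stein identity for $\mathcal A_q$ in the first variable componentwise to the vector-valued integrand, gives
\begin{equation*}
\E\bigl[\mathcal{A}_{q,1}^T (\mathcal{A}_{p,2}-\mathcal{A}_{q,2}) k(Y,Y')\bigr] = 0,
\end{equation*}
so we arrive at the key rewriting
\begin{equation*}
\E\bigl[\mathcal{A}_{p,1}^T \mathcal{A}_{p,2} k(Y,Y')\bigr]
= \E\bigl[(\mathcal{A}_{p,1}^T-\mathcal{A}_{q,1}^T)(\mathcal{A}_{p,2}-\mathcal{A}_{q,2}) k(Y,Y')\bigr].
\end{equation*}
This is the two-variable counterpart of \eqref{eq:leyswangeneee} and is the conceptual heart of the argument.

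\paragraph{Step 3 (expansion).}
From \eqref{Ap}--\eqref{Aq} we have $\mathcal{A}_p-\mathcal{A}_q = \mathbf{A}_{p/q}\nabla + \mathbf{a}_{p/q}$, so the composition $(\mathcal{A}_{p,1}^T-\mathcal{A}_{q,1}^T)(\mathcal{A}_{p,2}-\mathcal{A}_{q,2})$ is nothing but the iterated operator of \eqref{it} with $\mathbf{A}_p,\mathbf{a}_p$ replaced by $\mathbf{A}_{p/q},\mathbf{a}_{p/q}$. Distributing, one obtains exactly the four terms in \eqref{eq:genkernelinf}: the ``$\nabla\nabla$-term'' with $\mathbf{A}_{p/q}$ on both sides, the two cross terms mixing $\mathbf{A}_{p/q}$ and $\mathbf{a}_{p/q}$, and the ``pure'' term $\mathbf{a}_{p/q}(Y)^T k(Y,Y')\mathbf{a}_{p/q}(Y')$.

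\paragraph{Main obstacle.}
The argument itself is short; the delicate point is bookkeeping for the transposed/tensorial operator $\mathcal{A}_p^T$ acting on a vector-valued output of $\mathcal{A}_{p,2}$, and checking the conditions under which Fubini and the vanishing Stein identities $\E_q[\mathcal{A}_q g] = 0$ can be invoked. The hypothesis that both $y \mapsto k(y,\cdot)$ and $y' \mapsto k(\cdot,y')$ lie in $\mathcal{F}(\mathcal{A}_p)\cap\mathcal{F}(\mathcal{A}_q)$ is precisely what is needed to justify these manipulations componentwise; once this is in place, Steps 1--3 combine to yield \eqref{eq:genkernelinf}.
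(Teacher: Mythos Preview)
Your proof is correct and follows essentially the same approach as the paper, which states only that the result ``follows immediately from \eqref{eq:leyswangeneee} with the choices \eqref{choice1} and \eqref{choice2}.'' Your Steps 1--2 spell out explicitly the two successive applications of the subtraction trick $\E_q[\mathcal{A}_p g]=\E_q[(\mathcal{A}_p-\mathcal{A}_q)g]$ (once in each variable, using independence), and Step~3 is the expansion; this is precisely the iteration the paper has in mind but does not write out.
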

Theorem \ref{theo:inform-metr-kern-1}, with the particular choice that
$\mathbf{A}_p(x) =\mathbf{A}_q(x)= I_d$, so that $\mathbf{A}_{p/q}=0$
and $\mathbf{a}_{p/q} = \rho_p - \rho_q$, was proposed in \cite{
  liu2016kernelized}. If both $p$ and $q$ are known up to a
normalising constant, then \eqref{eq:genkernelinf} does not depend on
the normalising constant and hence, again, is particularly attractive
in high-dimensional Bayesian inference.


From \eqref{eq:leyswanappsteik} and \eqref{eq:15} we introduce the
following two {special cases of particular interest.}
\begin{enumerate} 
\item If $\mathbf{A}_p(x) = \pmb \tau_p(x)$ and $\mathbf{A}_q(x) = \pmb \tau_q(x)$ for
  which $ \mathbf{a}_{p/q}=0$ then 
\begin{equation}
    \label{eq:kernelbasedkernelized}  
   \E \left[ \nabla_{y}^T \left( \pmb \tau_{p/q}(Y)\pmb \tau_{p/q}(Y') \right)
      \nabla_{y} k(t,t') |_{y=Y, y'=Y'} 
    \right]= \E
    \left[ \mathcal{A}_{p,1}^T\mathcal{A}_{p,2}k(Y, Y')  \right].
\end{equation}
\item If $\mathbf{A}_p(x) =\mathbf{A}_q(x)= I_d$ for which
  $\mathbf{A}_{p/q}=0$ and $\mathbf{a}_{p/q} = \rho_p - \rho_q$
then 
\begin{equation}
    \label{eq:scorebasedkernelized}
   \E \left[ \rho_{p/q}(Y)^T k(Y, Y') \rho_{p/q}(Y') \right] =\E
    \left[ \mathcal{A}_{p,1}^T\mathcal{A}_{p,2}k(Y, Y')  \right]
  \end{equation}

\end{enumerate}

 

\begin{example}[{Fisher information distance}]\label{ex:fisherinfo}
Pick  $k(y, y') = \delta_{y=y'}$ the
Dirac delta on the diagonal. Then \eqref{eq:20} with the choice
\eqref{eq:scorebasedkernelized} becomes  
\begin{equation}
  \label{eq:21}
  \mathcal{S}(Y, X, \delta) =  
  \E \left[ \rho_{X/Y}(Y)^T
    \rho_{X/Y}(Y) \right]  =:   J(Y/X) 
\end{equation}
with $J(Y/X)$ the  classical \emph{Fisher Information Distance} between $X$
and $Y$, see \cite{Jo04}.  
\end{example}

\begin{example}[{Independent kernels}]
  Let $(e_i)_{i=1, \ldots, n}$ be a sequence of functions in
  $\mathcal{F}(X)$ and
  $k(x, y) = \sum_{i=1}^d \alpha_i e_i \otimes e_i$ (which belongs to
  $\mathcal{G}$ for any $(\alpha_i)_{i=1, \ldots, n}$). Then
\begin{equation}
  \label{eq:stekindepkern}
   \mathcal{S}(Y, X, (\alpha, e)_n ) =  \sum_{i=1}^n \alpha_i(\E \left[
       \mathcal{A}_pe_i(Y) \right])^T(\E \left[ \mathcal{A}_pe_i(Y)
     \right]).
\end{equation}
 
\end{example}

\begin{example}(Kernelized Stein discrepancies for comparing Gaussian
  random vectors)
\label{sec:kern-stein-discr-1}
Let $X$ and $Y$ be independent centered multivariate normal random
variables in $\R^d$ with variances $\Sigma_1$ and $\Sigma_2$,
respectively.  Take $\mathcal{A}_pg(x) = \Sigma \nabla g(x) - x
g(x)$. Then, for any sufficiently regular function $e : \R^d \to \R$
we have
\begin{align*}\E \left[ \mathcal{A}_pe(Y)\right] =E \left[( \Sigma_1 -\Sigma_2) \nabla e(Y)\right] 
\end{align*}
so that the kernelized discrepancy \eqref{eq:stekindepkern} becomes 
\begin{align*}
  \mathcal{S}(Y, X, (\alpha, e)_n) =  \sum_{i=1}^n \alpha_i\E
  \left[ \nabla e_i(Y) \right]^T (\Sigma_1 - \Sigma_2)^2 \E
  \left[ \nabla e_i(Y) \right].
\end{align*}
Taking $n=d$, $\alpha_i=1$ and $e_i(y) = y_i$ and supposing that all
marginals have unit variance leads to the natural measure of
discrepancy
\begin{align*}
  \mathcal{S}_n(Y, X) = 2 \sum_{i<j} (\sigma_{ij}^X-\sigma_{ij}^Y)^2
\end{align*}
with $\sigma_{ij}^X$ (resp., $\sigma_{ij}^Y$) the covariance between
the marginals $i$ and $j$ of $X$ (resp., of $Y$). 
\end{example} 
  
In terms of potential applications, one of the most interesting
aspects of identity \eqref{eq:genkernelinf} is the fact that the right-hand side
justifies the use of the left-hand side as a discrepancy metric.
Applications of \eqref{eq:scorebasedkernelized} have begun to be
explored \cite{chwialkowski2016kernel,liu2016kernelized}, and more
general versions have been touched upon in
\cite{gorham2017measuring}. The freedom of choice in the input
matrices $\mathbf{A}_p, \mathbf{A}_q$ encourages us to be hopeful that
these quantities will have numerous applications. 

Inspired by \cite{chwialkowski2016kernel,liu2016kernelized}, we
conclude the section with an illustration of how to apply
    Stein discrepancies to obtain a goodness-of-fit test for
a Student-$t$ distribution.


\begin{example}[Dimension 1]
  Let $p$ be the centered Student-$t$ distribution with $\ell$ degrees of
  freedom, with score function  
  given by 
  \begin{equation}
    \label{eq:22}
    \rho_{t_\ell}(y) = -\frac{y(\ell+1)}{\ell+y^2} {{.}} 
  \end{equation}
The operator obtained from \eqref{eq:22} is
\begin{align}
&   \mathcal{A}_{t_\ell}^1f(y) = f'(y) - \frac{(\ell+1)y }{\ell+y^2}f(y)\label{eq:scorkernelt1}
\end{align}
The preceding developments  lead to postulating the sample-based
discrepancy
\begin{align*}
  &\mathcal{S}^{\rho}(t_\ell, q, k) =  \frac{1}{n_1n_2} \sum_{i= 1}^{n_1} \sum_{j=1}^{n_2}
    u_q^{\nu}(y_i, y'_j), 
\end{align*}
with $y_1, \ldots, y_{n_1}$ and $y'_1, \ldots, y'_{n_2}$ two i.i.d.\
samples independently drawn from $q$ and 
\begin{equation}
  \label{eq:uqgener1}
  u_q^{\rho}(y, y') = \mathcal{A}_{t_\ell, y}^\rho \mathcal{A}_{t_\ell,
    y'}^\rho  k(y, y'), 
\end{equation}
and $k(y, y')$ a well chosen kernel. Particularizing
\eqref{eq:uqgener1} we get 
\begin{align*}
 &  u_q^1(y, y') = \ \partial_y\partial_{y'}k(y, y')    
 - (\ell + 1) \frac{y'}{((y')^2+
  \ell)}  \partial_yk(y, y')   \\
& \qquad -  (\ell + 1) \frac{y}{(y^2+
  \ell)}  \partial_{y'}k(y, y')  
   +  (\ell + 1)^2 \frac{yy'}{(y^2+\ell)((y')^2+
  \ell)} k(y, y').
\end{align*}
Let $Y, Y' \sim q$ be two independent
copies. Since under natural conditions,
$\E_q \left[ u_q^{{{\rho}}}(Y, Y') \right] = 0$ if and only if $q = p$, a
natural goodness-of-fit test in this context  is to reject the
null assumption $ \mathcal{H}_0: q = p$ whenever
$\mathcal{S}^{\nu}(t_\ell, q, k)$ is too large.

For the sake of proof of concept rather than anything else,
here are the result of simulations comparing $X \sim p$
a Student with $\ell = 5 $ degrees of freedom with $Y \sim q$ a
Student with $\ell$ degrees of freedom, via the kernelized
discrepancies based on the RBF kernel $k(x, y) = e^{-(x-y)^2/2}$. The
quantiles for $\mathcal{S}^{\rho}(t_\ell, q, k)$ under the null
hypothesis were estimated by simulation, with $J=10^{5}$ experiments;
we obtained
\begin{center}
  \begin{tabular}{cc}
   2.5\% &       97.5\%  \\
\hline
-0.03837828 &   0.03970307 
  \end{tabular}
\end{center}
The results for $10^4$ simulations with $n_1 = n_2 = 100$ run for each
value of degrees of freedom
$\ell \in \left\{ 1, 4, 5, 6, 8, 10, 12, 100, 1000 \right\}$ (with
$\ell = 5$ corresponding to the null hypothesis) are reported below
(first line) as well as the corresponding results for the classical
Kolmogorov Smirnov test (R implementation \texttt{ks.test}), each time
on the same data:
\begin{center}
{\small
\begin{tabular}{c|cc|c|ccccccccccc}
  $\ell$ &  1 &   4 &  5 & 6   & 8   & 10   & 12 & 100  & 1000 \\
  \hline 
  kernel  & 0.9049&  0.0576 &  0.0507 & 0.0445  &  0.0433 & 0.0550
                                            & 0.0587 & 0.1330 & 0.1510 
  \\
  ks   & 0.9384 & 0.0497  & 0.0459  & 0.0479  & 0.0443  & 0.0461
                                            & 0.0480 & 0.0593& 
             0.0618 \\
  \hline
\end{tabular}
}
\end{center}
It appears that the test based on $u^{{\rho}}_q(\cdot, \cdot)$ is not as
powerful as the Kolmogorov Smirnov test, at least in our
 implementation. The numerical values
are not reported. A more detailed study of such Stein-based
discrepancy tests is under way (\cite{ERS18}).
\end{example}
\begin{example}[Dimension 2]
  Fix $d=2$ and let $p$ be the centered Student-$t$ distribution with
  $\ell$ degrees of freedom and $\Sigma = Id$ the identity matrix. 
We only consider the
operator
\begin{align*}
  &\mathcal{A}_1f(y) = \pmb \tau(y)\nabla f(y) - y f(y)
\end{align*}
with $\pmb \tau$ the Stein kernel matrix given in \eqref{eq:steikstu2},
with entries $(\tau_{ij})_{1 \le i, j \le 2}$; the
resulting kernelized discrepancy is
\begin{align*}
  &\mathcal{S}^{\nu}(t_\ell, q, \ell) =  \frac{1}{n_1n_2} \sum_{i= 1}^{n_1} \sum_{j=1}^{n_2}
    u_q(y_i, y'_j).
\end{align*}
with  discrepancy generating  function
\begin{align*}
  u_q(y, y')  =&    \mathcal{A}_{p,1}^TA_{p,2}k(y, y')  \\
             =  &  [ \tau_{11}(y) \tau_{11}(y') + \tau_{12}(y) \tau_{12}(y') ]\partial_{y_1} \partial_{y_1'}k(y, y') \\
               & \quad  +  [\tau_{11}(y) \tau_{12}(y') + \tau_{12}(y)
                 \tau_{22}(y') ] \partial_{y_1} \partial_{y_2'}k(y, y')\\
               & \quad  +    [\tau_{12}(y) \tau_{11}(y') +
                 \tau_{22}(y) \tau_{12}(y')]
\partial_{y_2} \partial_{y_1'}k(y, y') \\  
               & \quad  +  [\tau_{12}(y) \tau_{12}(y') + \tau_{22}(y)\tau_{22}(y')] 
\partial_{y_2} \partial_{y_2'}k(y,
                 y') \\
               &  \quad - 
(\tau_{11}(y) y'_{1} + \tau_{12}(y) y'_{2}  ) \partial_{y_1}k(y,
                 y')  - 
(\tau_{12}(y) y'_{1} + \tau_{22}(y) y'_{2}  ) 
\partial_{y_2}k(y,  y') \\
               &  \quad -
(y_{1}\tau_{11}(y')  + y_{2} \tau_{12}(y') )   \partial_{y_1'}k(y,
                 y')  
- (y_{1}\tau_{12}(y')  + y_{2} \tau_{22}(y') ) 
\partial_{y_2'}k(y,   y') \\
               & \quad + (y_1y_1'+y_2y_2')k(y, y')
\end{align*}

As in the previous example, we present simulation results on a rather
modest simulation study. We compare $X \sim p$ a bivariate (centered
scaled) Student with $\ell = 5 $ degrees of freedom with $Y \sim q$ a
bivariate (centered scaled) Student with $\ell$ degrees of freedom,
via the kernelized discrepancies based on the RBF kernel
$k(x, y) = e^{-(x-y)^2/2}$ with $n_1 = n_2 = 100$. The quantiles were
estimated by simulation, with $J=10^{3}$ experiments; we obtained
\begin{center}
  \begin{tabular}{cc}
   2.5\% &       97.5\%  \\
\hline
-0.07256331 &  0.08441458 
  \end{tabular}
\end{center}
(which indicates some asymmetry in the sample distribution).  The
results for $10^3$ simulations run for each value of degrees of
freedom $\ell \in \left\{ 0.1, 1, 5, 10, 100, 1000 \right\}$ (with
$\ell = 5$ corresponding to the null hypothesis) are reported below:
\begin{center}
{\small
\begin{tabular}{c|ccc|c|cccc}
  $\ell$ & 0.1& 1&4& 5& 6& 10& 100& 1000 \\
  \hline 
  kernel  &   0.339  &0.690 &0.056 &0.043 &0.046 &0.038& 0.048 &0.052
  \\
  \hline
\end{tabular}
}
\end{center}
Our naive implementation of the bivariate test appears to have
difficulties in distinguishing the bivariate Student from the
bivariate Gaussian (obtained at $\ell = 1000$). Such an observation is
perhaps not so surprising, see e.g.\ \cite{mcassey2013empirical} where
a similar problem is tested (by different means) with low power for
the case of Gaussian vs Student, see page 1126. The problem of
devising tractable powerful goodness-of-fit tests for multivariate
distributions seems to be difficult; we will concentrate on this in
future publications.

\end{example}

\bigskip
\noindent {\bf{Acknowledgements.}} GM and YS gratefully acknowledges
support by the Fonds de la Recherche Scientifique - FNRS under Grant
MIS F.4539.16.  GR acknowledges partial support from EPSRC grant
EP/K032402/1 and the Alan Turing Institute.  We also thank Christophe
Ley and Guillaume Poly for interesting discussions, as well as Lester Mackey and Steven
Vanduffel for suggesting some references which we had overlooked.

\bibliographystyle{plainnat}

\begin{thebibliography}{54}
\providecommand{\natexlab}[1]{#1}
\providecommand{\url}[1]{\texttt{#1}}
\expandafter\ifx\csname urlstyle\endcsname\relax
  \providecommand{\doi}[1]{doi: #1}\else
  \providecommand{\doi}{doi: \begingroup \urlstyle{rm}\Url}\fi



\bibitem{adcock2007} 
C.~J. Adcock.  
\newblock Extensions of Stein's
  lemma for the skew-normal distribution.  
\newblock   \emph{Communications in Statistics -- Theory and Methods}, 36(9),
  1661-1671, 2007.

\bibitem{adcock2010} C.~J. Adcock.  \newblock Asset pricing and
  portfolio selection based on the multivariate extended
  skew-Student-t distribution.  \newblock \emph{Annals of Operations
    Research}, 176(1), 221-234, 2010.


\bibitem{adcock2014} 
C.~J. Adcock. 
\newblock Mean-variance-skewness efficient surfaces, Stein's
lemma and the multivariate extended skew-Student
distribution. 
\newblock \emph{European Journal of Operational Research}, 234(2),
392--401, 2014.




\bibitem{aerts2017robust}
S. Aerts and G. Haesbroeck.
\newblock Robust asymptotic tests for the equality of multivariate coefficients
 of variation.
\newblock \emph{Test}, 26\penalty0 (1):\penalty0 163--187, 2017.

\bibitem{ali1978class}
M.~M. Ali, N.N.~Mikhail, and M.~S. Haq.
\newblock A class of bivariate distributions including the bivariate logistic.
\newblock \emph{Journal of Multivariate Analysis}, 8\penalty0 (3):\penalty0
 405--412, 1978.


\bibitem{ahind2019}
  B. Arras  and C. Houdr\'e.
  \newblock \emph{On Stein's Method for
  Infinitely Divisible Laws with Finite First Moment.}
  \newblock Springer
 International Publishing, 2019.


 
\bibitem{artstein2004solution}
S. Artstein, K.~M. Ball, F. Barthe, and A. Naor.
\newblock Solution of {S}hannon's problem on the monotonicity of entropy.
\newblock \emph{Journal of the American Mathematical Society}, 17\penalty0
 (4):\penalty0 975--982, 2004{\natexlab{a}}.

\bibitem{ABBNptrf}
 S. Artstein, K.~M. Ball, F. Barthe, and A. Naor.
\newblock On the rate of convergence in the entropic central limit theorem.
\newblock \emph{Probability Theory and Related Fields}, 129\penalty0
 (3):\penalty0 381--390, 2004{\natexlab{b}}.



 
\bibitem{azzalini1996}
A. Azzalini and A. Dalla Valle. 
\newblock The multivariate skew-normal distribution. \newblock \emph{Biometrika},  
83\penalty0
 (4):\penalty0 715-26, 
 1996.

\bibitem{BGL13} D. Bakry, I. Gentil, and M. Ledoux.  \newblock
  \emph{Analysis and geometry of Markov diffusion operators.}
  \newblock Springer Science \& Business Media.  2013.



\bibitem{BaBaNa03}
K. Ball, F. Barthe, and A. Naor.
\newblock Entropy jumps in the presence of a spectral gap.
\newblock \emph{Duke Mathematical Journal}, 119\penalty0 (1):\penalty0 41--63,
 2003.


\bibitem{Ba90}
A.~D. Barbour.
\newblock Stein's method for diffusion approximations.
\newblock \emph{Probability Theory and Related Fields}, 84\penalty0
 (3):\penalty0 297--322, 1990.



\bibitem{BaCh14} A.~D. Barbour and L.~H.~Y. Chen.  
\newblock Stein's  (magic) method.  
\newblock \emph{arXiv preprint arXiv:1411.1179},  2014.

\bibitem{barbour2015multivariate}
A.~D. Barbour, M.~J. Luczak, and A. Xia.
\newblock Multivariate approximation in total variation, i: equilibrium
 distributions of markov jump processes.
\newblock \emph{The Annals of Probability}, 46\penalty0 (3):\penalty0
 1351--1404, 2018.

\bibitem{barbour2018multivariate}
A.~D. Barbour, M.~J. Luczak, A. Xia.
\newblock Multivariate approximation in total variation, ii: Discrete normal
 approximation.
\newblock \emph{The Annals of Probability}, 46\penalty0 (3):\penalty0
 1405--1440, 2018.

\bibitem{bonis2015rates}
T.~Bonis.
\newblock Rates in the central limit theorem and diffusion approximation via
 {S}tein's method.
\newblock {\em arXiv preprint  arxiv:1506.06966}, 2015.


\bibitem{BrLi76}
H.~J. Brascamp and E.~H. Lieb.
\newblock On extensions of the {B}runn-{M}inkowski and {P}r{\'e}kopa-{L}eindler
 theorems, including inequalities for log concave functions, and with an
 application to the diffusion equation.
\newblock \emph{Journal of Functional Analysis}, 22\penalty0 (4):\penalty0
 366--389, 1976.

\bibitem{CP89} T. Cacoullos and V. Papathanasiou.  \newblock
  Characterizations of distributions by variance bounds.  \newblock
  \emph{Statistics \& Probability Letters}, 7\penalty0 (5):\penalty0
  351--356, 1989.


\bibitem{C82}
T. Cacoullos.
\newblock On upper and lower bounds for the variance of a function of a random
 variable.
\newblock \emph{The Annals of Probability}, 10\penalty0 (3):\penalty0 799--809,
 1982.

\bibitem{CaPaUt92}
T. Cacoullos, V. Papathanasiou, and S.Utev.
\newblock Another characterization of the normal distribution and a related
 proof of the central limit theorem.
\newblock \emph{Teoriya Veroyatnostei  i  ee Primeneniya}, 37\penalty0 (4):\penalty0
 648--657, 1992.


\bibitem{CPU94}
T. Cacoullos, V. Papathanasiou, and S. Utev.
\newblock Variational inequalities with examples and an application to the
 central limit theorem.
\newblock \emph{The Annals of Probability}, 22\penalty0 (3):\penalty0
 1607--1618, 1994.

\bibitem{chatsurv}
  S. Chatterjee.
  \newblock A short survey of Stein's 
  method.
  \newblock In \emph{ Proceedings of the International
    Congress of Mathematicians—Seoul} (S. Y. Jang, Y. R. Kim,
  D.-W. Lee and I. Yie, eds.) IV:1–24, 2014.

 
\bibitem{ChMe08}
S. Chatterjee and E. Meckes.
\newblock Multivariate normal approximation using exchangeable pairs.
\newblock \emph{ALEA Latin American Journal of Probability and Mathematical
 Statistics}, 4:\penalty0 257--283, 2008.


\bibitem{CS11}
S.~Chatterjee and Q.-M. Shao.
\newblock Nonnormal approximation by {S}tein's method of exchangeable pairs
  with application to the {C}urie-{W}eiss model.
\newblock {\em The Annals of Applied Probability}, 21(2):464--483, 2011.




\bibitem{ChGoSh11}
L. H.~Y. Chen, L. Goldstein, and Q.-M. Shao.
\newblock \emph{Normal approximation by {S}tein's method}.
\newblock Probability and its Applications (New York). Springer, Heidelberg,
 2011.


\bibitem{chwialkowski2016kernel}
K. Chwialkowski, H. Strathmann, and A. Gretton.
\newblock A kernel test of goodness of fit.
\newblock In \emph{International Conference on Machine Learning}, 
 2606--2615, 2016.

\bibitem{courtade2017existence}
T.~A. Courtade, M. Fathi, and A. Pananjady.
\newblock Existence of Stein kernels under a spectral gap, and discrepancy
 bound.
\newblock  \emph{Annales de l'IHP: Probabilites et
  Statistiques}, 55\penalty0(2): 777-790, 2019.
  

\bibitem{Do14}
C. D\"obler.
\newblock Stein's method of exchangeable pairs for the beta distribution and
 generalizations.
\newblock \emph{Electronic Journal of Probability}, 20\penalty0 (109):\penalty0
 1--34, 2015.

\bibitem{dobler2015iterative} C. D{\"o}bler, R.~E Gaunt, and
  S.~J. Vollmer.  \newblock An iterative technique for bounding
  derivatives of solutions of Stein equations.  \newblock
  \emph{Electronic Journal of Probability}, 22\penalty0 (96):\penalty0
  1--39, 2017.

\bibitem{ERS18}
M. Ernst  and Y. Swan.
\newblock Stein based goodness-of-fit tests.
\newblock In preparation, 2019.

\bibitem{ernstswan2019}
M. Ernst  and Y. Swan.
\newblock Distances between distributions via Stein's method.
\newblock {\em arXiv preprint arXiv:1909.11518}, 2019. 


\bibitem{ernst2019first}
M.~Ernst, G.~Reinert, and Y.~Swan.
\newblock First order covariance inequalities via {S}tein's method.
\newblock {\em arXiv preprint}, 2019.
\newblock Submitted for publication.

\bibitem{ernst2019infinite}
M.~Ernst, G.~Reinert, and Y.~Swan.
\newblock On infinite covariance expansions.
\newblock {\em arXiv preprint arXiv:1906.08376}, 2019.


\bibitem{FSW18} X. Fang, Q.M. Shao, and L.  Xu.  \newblock
  Multivariate approximations in Wasserstein distance by Stein's
  method and Bismut's formula.  \newblock \emph{Probability Theory and
    Related Fields}, 1--35, 2018.


\bibitem{F2018} 
M. Fathi.
\newblock Stein kernels and moment maps. 
\newblock \emph{The Annals of Probability} 47(4), 2172--2185, 2019.







\bibitem{GMS18}
T. Gallou{\"e}t, G. Mijoule, and Y. Swan.
\newblock Regularity of solutions of the Stein equation and rates in the
 multivariate central limit theorem.
\newblock \emph{arXiv preprint arXiv:1805.01720}, 2018.

\bibitem{gan2017dirichlet}
H.~L. Gan, A. R{\"o}llin, and N. Ross.
\newblock Dirichlet approximation of equilibrium distributions in Cannings
 models with mutation.
\newblock \emph{Advances in Applied Probability}, 49\penalty0 (3):\penalty0
 927--959, 2017.

\bibitem{GS02}
A.~L. Gibbs and F.~E. Su.
\newblock On choosing and bounding probability metrics.
\newblock \emph{International Statistical Review / Revue Internationale de
 Statistique}, 70\penalty0 (3):\penalty0 419--435, 2002.

\bibitem{G91}
F. G{{\"o}}tze.
\newblock On the rate of convergence in the multivariate clt.
\newblock \emph{The Annals of Probability}, 19\penalty0 (2):\penalty0 724--739,
 1991.


\bibitem{goldstein2005zero}
L. Goldstein and G. Reinert.
\newblock Zero biasing in one and higher dimensions, and applications. In: A.~D. Barbour and L.~H.~Y. Chen eds, 
\newblock \emph{Stein's Method and Applications} (Vol. 5), World Scientific:\penalty0 1--18, 2005.

\bibitem{goldstein1997stein} L. Goldstein and G. Reinert.
 \newblock Stein's method and the zero bias transformation with
 application to simple random sampling.  \newblock \emph{The Annals
   of Applied Probability}, 7\penalty0 (4):\penalty0 935--952, 1997.

\bibitem{gomez2008} 
E. G\'omez-S\'anchez-Manzano, M.A. G\'omez-Villegas,  and J.M. Mar\'in.
\newblock A multivariate generalization of the  power exponential
family of  distributions.
\newblock \emph{Communications in
Statistics-Theory and Methods}, 27(3), 589--600, 
1998.


\bibitem{gorham2016measuring}
J. Gorham, A.~B. Duncan, S.~J. Vollmer, and L. Mackey.
\newblock Measuring sample quality with diffusions.
\newblock \emph{arXiv preprint arXiv:1611.06972}, 2016.


\bibitem{gorham2015measuring}
J. Gorham and L. Mackey.
\newblock Measuring sample quality with Stein's method.
\newblock In \emph{Advances in Neural Information Processing Systems}, 
 226--234, 2015.

\bibitem{gorham2017measuring}
J. Gorham and L. Mackey.
\newblock Measuring sample quality with kernels.
\newblock In \emph{International Conference on Machine Learning
(ICML)}, 1292--1301,  2017.


\bibitem{holmes2004stein}
S. Holmes and G. Reinert. 
\newblock Stein's method for the bootstrap.
\newblock In  Persi Diaconis and Susan Holmes, editors, \emph{Stein's method:
 expository lectures and applications}, volume~46 of \emph{IMS Lecture Notes
 Monogr. Ser}, pages 93--132. Beachwood, Ohio, USA: Institute of Mathematical
 Statistics, 2004.

 
 \bibitem{huggins2016quantifying}
 J. H Huggins and J. Zou.
  \newblock Quantifying the accuracy of approximate diffusions and Markov chains.
\newblock \emph{Artificial Intelligence and Statistics}, 382--391. 2017.

\bibitem{Jo04}
O. Johnson.
\newblock \emph{Information theory and the central limit theorem}.
\newblock Imperial College Press, London, 2004.
\newblock ISBN 1-86094-473-6.

\bibitem{landsman2008stein}
Z. Landsman and J. Ne{\v{s}}lehov{\'a}.
\newblock Stein's lemma for elliptical random vectors.
\newblock \emph{Journal of Multivariate Analysis}, 99\penalty0 (5):\penalty0
 912--927, 2008.

\bibitem{landsman2013note}
Z. Landsman, S. Vanduffel, and J. Yao.
\newblock A note on Stein's lemma for multivariate elliptical distributions.
\newblock \emph{Journal of Statistical Planning and Inference}, 143\penalty0
 (11):\penalty0 2016--2022, 2013.

\bibitem{landsman2015some}
Z. Landsman, S. Vanduffel, and J. Yao.
\newblock Some Stein-type inequalities for multivariate elliptical
 distributions and applications.
\newblock \emph{Statistics \& Probability Letters}, 97:\penalty0 54--62,
 2015.


\bibitem{ledoux2015stein}
M. Ledoux, I. Nourdin, and G. Peccati.
\newblock Stein's method, logarithmic Sobolev and transport inequalities.
\newblock \emph{Geometric and Functional Analysis}, 25\penalty0 (1):\penalty0
 256--306, 2015.


\bibitem{ley2013stein}
C. Ley and Y. Swan.
\newblock Stein's density approach and information inequalities. 
\newblock \emph{Electronic Communications in Probability}, 18\penalty0
(7) 1--14, 2013. 


\bibitem{swangeneral}
C. Ley and Y. Swan.
\newblock A general parametric Stein characterization.
\newblock \emph{Statistics \& Probability Letters},  111\penalty0,
67--71, 2016.



\bibitem{ley2015distances}
C. Ley, G. Reinert, and Y. Swan.
\newblock Distances between nested densities and a measure of the impact of the
 prior in Bayesian statistics.
\newblock \emph{Annals of Applied Probability}, 27\penalty0 (1):\penalty0
 216--241, 2017.

\bibitem{LRS16}
C. Ley,  G. Reinert, and Y. Swan.
\newblock Stein's method for comparison of univariate distributions.
\newblock \emph{Probability Surveys}, 14:\penalty0 1--52, 2017.

\bibitem{liu2016kernelized}
Q. Liu, J. Lee, and M. Jordan.
\newblock A kernelized Stein discrepancy for goodness-of-fit tests.
\newblock In \emph{International Conference on Machine Learning}, pages
 276--284, 2016.

\bibitem{ma}
Z. Ma and M. R{\"o}ckner.
\newblock Introduction to the theory of (non-symmetric) Dirichlet forms.
  \newblock \emph{Springer Science \& Business Media}, 2012.

\bibitem{mackey2016multivariate}
L. Mackey and J. Gorham.
\newblock Multivariate Stein factors for a class of strongly log-concave
 distributions.
\newblock \emph{Electronic Communications in Probability}, 21, 2016.

\bibitem{mcassey2013empirical}
M.~P. McAssey.
\newblock An empirical goodness-of-fit test for multivariate distributions.
\newblock \emph{Journal of Applied Statistics}, 40\penalty0 (5):\penalty0
 1120--1131, 2013.

\bibitem{mcneil2015quantitative}
A.~J. McNeil, R. Frey, and P. Embrechts.
\newblock \emph{Quantitative risk management: Concepts, techniques and tools}.
\newblock Princeton University Press, 2015.

\bibitem{NP11}
I. Nourdin and G. Peccati.
\newblock \emph{Normal approximations with {M}alliavin calculus~: from
 {S}tein's method to universality}.
\newblock Cambridge Tracts in Mathematics. Cambridge University Press, 2012.

\bibitem{NoVi09}
I. Nourdin and Frederi~G. Viens.
\newblock Density formula and concentration inequalities with Malliavin
 calculus.
\newblock \emph{Electronic Journal of Probability}, 14:\penalty0 2287--2309,
 2009.

\bibitem{nourdin2013entropy}
I. Nourdin, G. Peccati, and Y. Swan.
\newblock Entropy and the fourth moment phenomenon.
\newblock \emph{Journal of Functional Analysis}, 266:\penalty0 3170--3207,
 2014.

\bibitem{nourdin2013integration} I. Nourdin, G. Peccati, and Y. Swan.
  \newblock Integration by parts and representation of information
  functionals.  \newblock \emph{IEEE International Symposium on
    Information Theory (ISIT)}, pages 2217--2221, 2014.

\bibitem{oates2017control}
C.~J. Oates, M. Girolami, and N. Chopin.
\newblock Control functionals for Monte Carlo integration.
\newblock \emph{Journal of the Royal Statistical Society: Series B (Statistical
 Methodology)}, 79\penalty0 (3):\penalty0 695--718, 2017.

\bibitem{oates2017posterior}
C.~J. Oates, A. Barp and M. Girolami. 
\newblock
Posterior Integration on an Embedded Riemannian Manifold.
\newblock \emph{arXiv preprint arXiv:1712.01793} (2017).


\bibitem{payne1960optimal}
L.~E. Payne and H.~F. Weinberger.
\newblock An optimal Poincar{\'e} inequality for convex domains.
\newblock \emph{Archive for Rational Mechanics and Analysis}, 5\penalty0
 (1):\penalty0 286--292, 1960.

\bibitem{pain14} D. Paindaveine.  \newblock Elliptical symmetry.
  \newblock \emph{Wiley StatsRef: Statistics Reference Online}, 2014.



\bibitem{MR2091552}
M.~Rai{\v{c}}.
\newblock A multivariate {CLT} for decomposable random vectors with finite
 second moments.
\newblock \emph{Journal of Theoretical Probability}, 17\penalty0 (3):\penalty0
 573--603, 2004.

\bibitem{MR2573554}
G. Reinert and A. R\"ollin.
\newblock Multivariate normal approximation with {S}tein's method of
 exchangeable pairs under a general linearity condition.
\newblock \emph{The Annals of Probability}, 37\penalty0 (6):\penalty0
 2150--2173, 2009.

\bibitem{reinert2017approximating}
G. Reinert and N. Ross.
\newblock Approximating stationary distributions of fast mixing Glauber
 dynamics, with applications to exponential random graphs.
 \newblock \emph{The Annals of Applied Probability},
 29\penalty0 (5):\penalty0
3201--29, 2019.
 

\bibitem{rossfund} N. Ross.  \newblock Fundamentals of Stein's method.
  \newblock \emph{Probability Surveys}, 8:\penalty0 210--293, 2011.


\bibitem{saumard2018weighted} A. Saumard.  \newblock Weighted
  Poincar\'e inequalities, concentration inequalities and tail bounds
  related to the behavior of the Stein kernel in dimension one.
  \newblock \emph{arXiv preprint arXiv:1804.03926}, 2018.

 \bibitem{saumardwellner}
 A. Saumard and  J.A. Wellner.
 \newblock  Log-concavity and strong log-concavity: a review. 
 \newblock \emph{Statistics Surveys, 8}, 45, 2014.
 
 
\bibitem{stein1972}
C. Stein.
\newblock \emph{A bound for the error in the normal approximation to the distribution of a sum of dependent random variables}.
\newblock In: Proceedings of the Sixth Berkeley Symposium on Mathematical Statistics and Probability, Volume 2: Probability Theory, 1972.

\bibitem{stein1986}
C. Stein.
\newblock \emph{Approximate computation of expectations}.
\newblock Institute of Mathematical Statistics Lecture Notes---Monograph
 Series, 7. Institute of Mathematical Statistics, Hayward, CA, 1986.


\bibitem{stein2004}
C. Stein, P. Diaconis, S. Holmes, and G. Reinert.
\newblock Use of exchangeable pairs in the analysis of simulations.
\newblock In: Persi Diaconis and Susan Holmes, editors, \emph{Stein's method:
 expository lectures and applications}, volume~46 of \emph{IMS Lecture Notes
 Monogr. Ser}, pages 1--26. Beachwood, Ohio, USA: Institute of Mathematical
 Statistics, 2004.


\bibitem{valiant2010clt}
G.~Valiant and P.~Valiant.
\newblock A {CLT} and tight lower bounds for estimating entropy.
\newblock In {\em Electronic Colloquium on Computational Complexity (ECCC)},
 volume~17, 2010.

\bibitem{VY2017}
S. Vanduffel and  J. Yao. 
\newblock A Stein type lemma for the multivariate
generalized hyperbolic distribution. 
\newblock \emph{European Journal of Operational
  Research}. 261(2):606--612,  2017. 


\bibitem{zolotarev1983probability}
V.~M. Zolotarev.
\newblock Probability metrics.
\newblock \emph{Teoriya Veroyatnostei i ee Primeneniya}, 28\penalty0
 (2):\penalty0 264--287, 1983.

\end{thebibliography}

\addcontentsline{toc}{section}{References}

\end{document}